\documentclass[11pt,a4paper]{article}

\usepackage[utf8]{inputenc}
\usepackage[english]{babel}
\usepackage{mathtools}
\usepackage{amsfonts} 
\usepackage{amsmath}
\usepackage{amssymb}
\usepackage{amsthm}
\usepackage{mathrsfs}
\usepackage{microtype}

\usepackage{marvosym}
\usepackage{wasysym} 
\usepackage{stmaryrd}

\usepackage{dsfont}
\usepackage{enumerate}
\usepackage{footmisc}

\usepackage{multicol,multirow}
\usepackage{graphicx}
\usepackage[dvipsnames]{xcolor}

\usepackage{authblk}
\usepackage{url}

\usepackage[hidelinks]{hyperref}
\hypersetup{
	colorlinks=false,
	linkcolor=blue,
	urlcolor=red,
	linktoc=all}
\usepackage{tikz}
\usetikzlibrary{decorations.pathreplacing}
\usetikzlibrary{decorations.pathmorphing}
\usetikzlibrary{calc}
\usetikzlibrary{decorations.pathreplacing}
\usetikzlibrary{decorations.pathmorphing,shapes}
\usetikzlibrary{arrows.meta}
\usetikzlibrary{patterns,fadings}
\usepackage[all,cmtip]{xy}

\usepackage{titletoc}
\titlecontents{section} 
[2.3em]                 
{\rmfamily}             
{\contentslabel{2.3em}} 
{\hspace*{-2.3em}}
{\titlerule*[1pc]{.}\contentspage}
\titlecontents{subsection} 
[3.3em]                 
{\rmfamily}             
{\contentslabel{3.3em}} 
{\hspace*{-3.3em}}
{\titlerule*[1pc]{.}\contentspage}

\newcommand{\Hilb}{\mathcal{H}}

\newcommand{\oneop}{\mathds{1}}

\newcommand{\slot}{{\,\cdot \,}}





\newcommand{\C}{\mathcal{C}}



\newcommand{\A}{\mathcal{A}}
\newcommand{\B}{\mathcal{B}}

\newcommand{\M}{\mathcal{M}}
\renewcommand{\H}{\mathcal{H}}

\newcommand{\N}{\mathcal{N}}

\newcommand{\Q}{\mathcal{Q}}
\newcommand{\mg}{\mathfrak{m}}
\newcommand{\matr}{\mathbb{M}}
\newcommand{\cutd}{\mathfrak{C}}

\newcommand{\RR}{\mathbb{R}}

\newcommand{\CC}{\mathbb{C}}

\newcommand{\NN}{\mathbb{N}}

\DeclareMathOperator{\Ad}{Ad}

\DeclareMathOperator{\id}{id}

\DeclareMathOperator{\tr}{tr}


\renewcommand{\restriction}{\mathord{\upharpoonright}}
\newcommand{\corestriction}{\mathord{\downharpoonleft}}




\usepackage{xspace}

\DeclareRobustCommand{\cf}{cf.\@\xspace}
\DeclareRobustCommand{\ie}{i.e.\@\xspace}

\DeclareRobustCommand{\Sec}{Sec.\@\xspace}
\DeclareRobustCommand{\Prop}{Prop.\@\xspace}

\DeclareRobustCommand{\Thm}{Thm.\@\xspace}
\DeclareRobustCommand{\Ch}{Ch.\@\xspace}

\makeatletter
\DeclareRobustCommand{\etc}{%
    \@ifnextchar{.}%
        {etc}%
        {etc.\@\xspace}%
}
\makeatother

\def\u1net{{\A_\RR}}



\theoremstyle{plain}
\newtheorem{theorem}[equation]{Theorem}

\newtheorem{lemma}[equation]{Lemma}
\newtheorem{proposition}[equation]{Proposition}

\theoremstyle{definition}
\newtheorem{definition}[equation]{Definition}
\newtheorem{notation}[equation]{Notation}

\theoremstyle{remark}
\newtheorem{example}[equation]{Example}
\newtheorem{remark}[equation]{Remark}

\numberwithin{equation}{section}

\setlength\topmargin{0.2in}
\setlength\headheight{0in}
\setlength\headsep{0.2in}
\setlength\textheight{9in}

\oddsidemargin=5pt 
\hyphenation{}\textwidth=440pt
\linespread{1.02}



\usepackage{soul}
\usepackage[normalem]{ulem}






\newcommand\define[1]{\emph{\textbf{#1}}}
\newcommand\aeequals[1]{\underset{\raisebox{0.3ex}[0pt][0pt]{\scriptsize${#1}$}}{=}}


\newcommand{\stoch}{\;\xy0;/r.25pc/:(-3,0)*{}="1";(3,0)*{}="2";{\ar@{~>}"1";"2"|(1.06){\hole}};\endxy\!}
\newcounter{sarrow}
\newcommand\xstoch[1]{%
\stepcounter{sarrow}%
\mathrel{\begin{tikzpicture}[baseline= {( $ (current bounding box.south) + (0,-0.1ex) $ )}]
\node[inner sep=.5ex] (\thesarrow) {\;$\scriptstyle #1$\;};
\path[draw,{<[scale=1.5,width=3,length=2]}-,decorate,
  decoration={snake,amplitude=0.3mm,segment length=2.1mm,pre=lineto,pre length=1pt}] 
    (\thesarrow.south east) -- (\thesarrow.south west);
\end{tikzpicture}}%
}
\newcommand{\bigboxplus}{
  \mathop{
    \vphantom{\bigoplus} 
    \mathchoice
      {\vcenter{\hbox{\resizebox{\widthof{$\displaystyle\bigoplus$}}{!}{$\boxplus$}}}}
      {\vcenter{\hbox{\resizebox{\widthof{$\bigoplus$}}{!}{$\boxplus$}}}}
      {\vcenter{\hbox{\resizebox{\widthof{$\scriptstyle\oplus$}}{!}{$\boxplus$}}}}
      {\vcenter{\hbox{\resizebox{\widthof{$\scriptscriptstyle\oplus$}}{!}{$\boxplus$}}}}
  }\displaylimits 
}

\DeclarePairedDelimiterX{\relentx}[2]{(}{)}{%
  #1\;\delimsize\|\;#2%
}
\newcommand{\relent}{S\relentx}

\newcommand\blfootnote[1]{%
  \begingroup
  \renewcommand\thefootnote{}\footnote{#1}%
  \addtocounter{footnote}{-1}%
  \endgroup
}

\begin{document}

\title{\huge Bayesian inversion and the Tomita--Takesaki modular group}

\author{
\large Luca Giorgetti$^1$, 
Arthur J. Parzygnat$^2$,\\
\large Alessio Ranallo$^3$,
and Benjamin P. Russo$^4$
}

\affil{\normalsize $^{1,3}$ Dipartimento di Matematica, Universit\`a di Roma Tor Vergata,\\

Via della Ricerca Scientifica, 1, I-00133 Roma, Italy\\

{\tt giorgett@mat.uniroma2.it}, {\tt ranallo@mat.uniroma2.it}}

\affil{\normalsize $^2$ Graduate School of Informatics, Nagoya University\\ 

Chikusa-Ku, Nagoya 464-8601, Japan\\

{\tt parzygnat@nagoya-u.jp}}

\affil{\normalsize $^4$ Computer Science and Mathematics Division,\\ 

Oak Ridge National Laboratory,\\

Oak Ridge, TN 37831, United States of America\\

{\tt russobp@ornl.gov}}

\date{}

\maketitle

\blfootnote{\emph{2020 Mathematics Subject Classification.} 
47A05, 47D03, 81P47
}
\blfootnote{\emph{Keywords and phrases: } 
Bayesian inversion, conditional expectations, disintegration, finite-dimensional $C^*$-algebras, Tomita--Takesaki modular theory. 
}

\begin{abstract}
We show that conditional expectations, optimal hypotheses, disintegrations, and adjoints of unital completely positive maps, are all instances of Bayesian inverses. We study the existence of the latter by means of the Tomita--Takesaki modular group and we provide extensions of a theorem of Takesaki as well as a theorem of Accardi and Cecchini to the setting of not necessarily faithful states on finite-dimensional $C^*$-algebras.
\end{abstract}

\tableofcontents

\section{Introduction}
There have been many recent advancements in the categorical approach towards probability theory and statistics. For example, the Kolmogorov zero-one law, Basu's theorem, Fisher--Neymann factorization, de~Finetti's theorem, the $d$-separation criterion, and the ergodic decomposition theorem have all been proved synthetically using the framework of Markov categories~\cite{Fr20,FrRi20,FGP21,FrKl23,MoPe22}. An immediate question is whether such categorical techniques could be used to derive new results in quantum probability. To answer this, generalizations of Markov categories were defined to allow both classical and quantum probabilistic concepts~\cite{PaBayes}. Among the many additional axioms possible for Markov categories, some of them, such as positivity, causality, and a.e.\ modularity, were proved for quantum operations in~\cite{PaBayes} (see also the recent work~\cite{FGGPS22} for connections between some of these axioms). More subtle axioms such as the existence of disintegrations, the existence of Bayesian inverses, or the existence of conditionals were studied in~\cite{PaRu19},~\cite{PaRuBayes}, and~\cite{PaQPL21}, respectively. Examples of novel applications of such a categorical approach in quantum information theory include formalising logical axioms for retrodiction~\cite{PaBu22} and constructing quantum states over time~\cite{FuPa22}.

The categorical approach towards quantum probability parallels the algebraic approach, but a closer inspection comparing and contrasting the two approaches has not yet been carried out in detail other than the preliminary results in~\cite{PaBayes}. In this article, we provide many such connections, most importantly including Tomita--Takesaki modular theory~\cite{Tak70book}. For example, we prove an equivalence between the existence of Bayesian inverses of unital completely positive maps and an intertwining condition between the modular groups due to Accardi--Cecchini~\cite{AcCe82} and Anantharaman-Delaroche~\cite{An06}. The usage of modular theory in recent years in quantum information theory~\cite{CaVe20,JRSWW16,JRSWW18,DuWi16,SuBeTo17,OhPeBook,PaBu22} and quantum field theory~\cite{Wi18,Fa20,LLR21,La19,CHPSSW19,CLRR22,FuLaOu22} indicates its importance. In addition, the Bayesian inverses, in the special case of faithful states given by the vacuum state restricted to local field algebras, have been recently used as a notion of inversion for generalized global gauge symmetries of subfactors and local quantum field theories~\cite{Bis17, BDG21, BDG22, BDG23} in the algebraic setting~\cite{HaagBook}.

Besides contributing to the dictionary between the categorical and algebraic approaches towards quantum probability, we also develop several new results and applications to quantum information theory and quantum probability. For example, we work with not necessarily faithful states, where the modular group no longer exists but is replaced with a semigroup. Although one naively might think that any statement said about faithful states can be immediately extended to non-faithful ones by looking at the support algebras, we show that this is not always the case. For example, although we prove that disintegrations for non-faithful states exist if and only if they exist on the underlying support algebras, this is not the case for Bayesian inverses. Since non-faithful states and their evolution along (noisy) quantum channels are of relevance to quantum information theory, these results are important for the reversibility of quantum operations involving non-faithful states (such as pure states). Finally, we illustrate how the non-commutative Bayes' theorem of~\cite{PaRuBayes} extends results of Majewski and Streater~\cite{MaSt98} and generalizes more recent results of Carlen and Vershynina~\cite{CaVe20}, which are themselves both extensions of a result of Nakamura, Takesaki, and Umegaki~\cite{NTU60}.

The paper is organized as follows. In Section \ref{sec:condexpdisintmodulargroup}, we recall the basic definitions of conditional expectation, state-preserving unital completely positive map, and disintegration in the context of $C^*$-algebras. We also review the Tomita--Takesaki modular operators and the modular automorphism group in the special case of finite-dimensional $C^*$-algebras, also called multi-matrix algebras, that we shall mainly deal with in this work. In this setting, we prove the equivalence between state-preserving conditional expectations and disintegrations, both with respect to faithful and non-faithful states.
In Section \ref{sec:BACP}, we recall the definition of Bayesian inverse of a state-preserving unital completely positive map and we compare it with the notion of adjoint due to Accardi and Cecchini in the context of operator algebras and quantum probability and with the notion of Petz recovery map in quantum information theory~\cite{Pe84}. In the case of faithful states, we characterize the existence of Bayesian inverses by means of the modular group. For arbitrary states, we show that Bayesian inverses generalize disintegrations in the same way as Accardi--Cecchini adjoints generalize state-preserving conditional expectations.
In Section \ref{sec:cebimgnfs}, we extend Takesaki's theorem \cite{Tak72}, which characterizes the existence of state-preserving conditional expectations by means of the modular group, to non-faithful states on matrix algebras and multi-matrix algebras. In particular, we find an additional necessary condition for the existence of such state-preserving conditional expectations, which is also sufficient together with the usual modular group condition on the support algebras.
In Section \ref{sec:Binvmgnonf}, as a further extension of Takesaki's theorem, we characterize the existence of Bayesian inverses with respect to non-faithful states on matrix algebras. Appendix \ref{app:Carlson} provides a review of Carlson's theorem from complex analysis.
Appendix \ref{app:stateprescondexp} provides an explicit characterization of the states on multi-matrix algebras that admit a state-preserving conditional expectation onto a given subalgebra.

\section{Conditional expectations, disintegrations, and the modular group}
\label{sec:condexpdisintmodulargroup}

\subsection{A brief review of definitions}

\begin{notation}\label{not:arrowsandsquigglyarrows}
In this work, all $C^*$-algebras will be unital, and all $C^*$-subalgebras of a (unital) $C^*$-algebra will be assumed to have the same unit as the larger $C^*$-algebra unless specified otherwise. The notation $\B\subseteq\A$ will be used to express that a $C^*$-algebra $\B$ is a $C^*$-subalgebra of a $C^*$-algebra $\A$. 
A linear map from a $C^*$-algebra $\B$ to a $C^*$-algebra $\A$ will be written as $\B\stoch\A$, while a $*$-homomorphism will be written as $\B\to\A$. The letters U, C, and P will be used to abbreviate unital, completely, and positive. For example, a UCP map is a unital completely positive map.  
If $A\in\A$, then $\mathrm{Ad}(A)$ will denote the CP map that sends $A'\in\A$ to $AA'A^*$. In calculations, this map may also be written as $\mathrm{Ad}_{A}$. Inner products will be denoted with angular brackets as $\langle\;\cdot\;,\;\cdot\;\rangle$ and will be linear in the right variable and conjugate linear in the left variable. See \cite{Pau02} for background.
\end{notation}

\begin{definition}
Let $\A$ and $\B$ be $C^*$-algebras.
A linear map $F:\B\stoch\A$ is \define{$*$-preserving} iff $F(B^*)=F(B)^*$ for all $B\in\B$.
\end{definition}

On occasion, we will need to restrict domains and codomains of functions to particular subsets in order to slightly redefine functions and make rigorous sense of certain compositions. As such, we include the following notation.

\begin{notation}
\label{not:rescor}
Let $F:\B\stoch\A$ be a linear map of $C^*$-algebras. Let $S$ and $T$ be \emph{subsets} of $\A$ and $\B$, respectively. The notation $F_{\restriction S}:S\stoch\A$ will be used to denote the \define{restriction} of $F$ to the subset $S$. If the image $F(\B)$ of $F$ is contained in $T$, then the notation ${}_{T\corestriction}F:\B\stoch T$ will be used to denote the \define{corestriction} of $F$, i.e., the unique function such that the composite $B\xstoch{{}_{T\corestriction}F}T\hookrightarrow\A$ equals $F$. The notation ${}_{T\corestriction}F_{\restriction S}:S\stoch T$ is used to combine restriction and corestriction. 
\end{notation}

\begin{definition}\label{defn:conditionalexpectation} Let $\A$ and $\B$ be $C^*$-algebras, with $\B\subseteq \A$. A \define{conditional expectation} is a linear map $E: \A \stoch \A$ such that
	\begin{enumerate}
	\item $E$ is a \define{projection} onto $\B$, i.e., $E(A)\in\B$ for all $A \in \A$ and $E(B)=B$ for all $B \in \B$,
	\item $E$ is \define{left $\B$-modular}, i.e., $E(B A)= B E(A) $ for all $B \in \B, \ A \in \A$, and
	\item $E$ is positive.
\end{enumerate}

If $\omega:\A\stoch\CC$ is a \define{state} on $\A$, i.e., a positive unital functional, an \define{$\omega$-preserving conditional expectation} is a conditional expectation $E$ as above such that $\omega\circ E=\omega$. 
\end{definition}

\begin{remark}
Since a conditional expectation $E$ is positive, it is $*$-preserving. As such, left $\B$-modularity of $E$ implies right $\B$-modularity. Indeed, if $E$ is left $\B$-modular, then $E(AB)=E(B^*A^*)^{*}=(B^*E(A^*))^*=E(A)B$ for all $A\in\A$ and $B\in\B$. Hence, $E$ is \define{$\B$-bimodular} in the sense that $E(B_1 A B_2)= B_1 E(A) B_2$ for all $B_1,B_2 \in \B, \ A \in \A$. Since the unit of $\A$ belongs also to $\B$ by our standing assumption on $\B\subseteq\A$, the map $E$ is unital and it has operator norm equal to 1.
\end{remark}

\begin{theorem}[Tomiyama~\cite{Tom57}]
\label{thm:tomiyama}
Let $\A$ and $\B$ be $C^*$-algebras with $\B\subseteq\A$. Every projection of norm $1$ from $\A$ to $\B$ is a conditional expectation and vice versa.
\end{theorem}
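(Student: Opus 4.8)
The plan is to treat the two implications separately, the \lq\lq vice versa\rq\rq\ direction being immediate and the main direction requiring work. First, if $E\colon\A\stoch\A$ is a conditional expectation onto $\B$, then by the preceding Remark it is positive and unital, so $\|E\|=\|E(1)\|=1$, and since $E(B)=B$ for $B\in\B$ while $E(\A)\subseteq\B$ it is idempotent; hence $E$ is a projection of norm $1$ onto $\B$. For the converse, let $E\colon\A\stoch\A$ be a projection of norm $1$ with $E(\A)\subseteq\B$, $E(B)=B$ for all $B\in\B$, and $\|E\|=1$; I must show $E$ is positive and $\B$-bimodular. Unitality is free: since $1\in\B$, idempotence gives $E(1)=1$. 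Positivity I would get from states: for any state $\phi$ on $\B$, the functional $\psi:=\phi\circ E$ satisfies $\|\psi\|\le\|\phi\|\,\|E\|=1$ and $\psi(1)=\phi(E(1))=1$, and a norm-one unital functional on a unital $C^*$-algebra is automatically a state, so $\psi$ is a state on $\A$. Thus for $a\ge 0$ we have $\phi(E(a))=\psi(a)\ge 0$, and letting $\phi$ range over all states of $\B$ forces the element $E(a)\in\B$ to be positive. Hence $E$ is positive, and therefore also $*$-preserving and contractive.

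The remaining, and decisive, task is the module property. My plan is to promote positivity to $2$-positivity (equivalently, to the operator Schwarz inequality $E(a^*a)\ge E(a)^*E(a)$ together with its $M_2(\A)$-level analogue) and then exploit idempotence via the multiplicative-domain argument. Granting this, every $b\in\B$ lies in the multiplicative domain of $E$, because $E$ fixes $\B$ pointwise and so
\[ E(b^*b)=b^*b=E(b)^*E(b), \qquad E(bb^*)=bb^*=E(b)E(b)^*. \]
The standard Cauchy--Schwarz argument for the $\B$-valued form $(x,y)\mapsto E(x^*y)-E(x)^*E(y)$, whose diagonal vanishes on the multiplicative domain, then forces $E(ba)=bE(a)$ for all $b\in\B$ and $a\in\A$, and taking adjoints yields $E(ab)=E(a)b$. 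This is exactly $\B$-bimodularity, which together with positivity identifies $E$ as a conditional expectation; complete positivity drops out of the same estimate applied to matrix amplifications.

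The hard part is establishing $2$-positivity (equivalently the Schwarz inequality) in the previous paragraph: a merely positive unital map need not be $2$-positive, so positivity alone is insufficient and one must genuinely use idempotence and the norm-one bound. A cheap partial result points the way. Applying Kadison's inequality $E(x^2)\ge E(x)^2$, valid for every unital positive map and self-adjoint $x$, to $x=tb+a$ with $b=b^*\in\B$, $a=a^*\in\A$, $t\in\RR$, and simplifying with $E(b^2)=b^2$ and $E(tb)=tb$, the coefficient of the linear term in $t$ must vanish, giving the Jordan (anticommutator) module law
\[ E(ab+ba)=bE(a)+E(a)b, \]
which extends by linearity to all $a\in\A$, $b\in\B$. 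Upgrading this symmetric identity to the one-sided law $E(ba)=bE(a)$---equivalently, establishing the full Schwarz inequality for the norm-one projection---is the delicate heart of the argument, and it is exactly here that I would expect to spend the most effort. One natural route to close the gap is to reduce to unitaries $u\in\B$, observe that $a\mapsto u^*E(ua)$ is again a norm-one projection onto $\B$, and then pin down the required equality using the state estimates from the positivity step; carrying out this last reduction carefully is the main obstacle.
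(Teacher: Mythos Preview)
The paper does not supply its own proof of Tomiyama's theorem; it simply cites \cite{Tom57} and \cite[Section~9.1]{St81}. So there is no in-paper argument to compare against beyond those references.

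Your treatment of the easy direction and your positivity argument via states are both correct and standard. The genuine gap---which you yourself flag---is the bimodularity step. Your plan to first promote positivity to $2$-positivity and then invoke the multiplicative-domain mechanism is circuitous and, as written, incomplete: you never say how the promotion is achieved, and a positive unital map need not be $2$-positive, so this step requires precisely the hard work you are hoping to defer. The Jordan identity $E(ab+ba)=bE(a)+E(a)b$ you extract from Kadison's inequality is correct, but as you acknowledge it does not by itself give the one-sided law, and the unitary reduction you sketch at the end does not close the gap without a further idea.

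The classical argument (Tomiyama's, and the one in the cited reference) bypasses $2$-positivity altogether and establishes $\B$-bimodularity \emph{directly} from the norm-$1$ condition. The key lemma is: for a projection $p\in\B$ (or in $\B^{**}$ when $\B$ lacks projections) and self-adjoint $a\in\A$ with $pa=ap=0$, one has $pE(a)p=0$. This comes from the estimate
\[
\big\|np+pE(a)p\big\|=\big\|pE(np+a)p\big\|\le\big\|np+a\big\|=\max(n,\|a\|)=n\quad(n\gg0),
\]
which forces $\sigma\big(pE(a)p\big)\subseteq(-\infty,0]$ in $p\B p$; replacing $a$ by $-a$ gives $pE(a)p=0$. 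A companion estimate handles the off-diagonal corners and yields $E(pap)=pE(a)p$; linearity and density of the span of projections in $\B^{**}$ then give full $\B$-bimodularity. Complete positivity is a \emph{consequence} of bimodularity plus positivity, not a stepping stone toward it, so your order of attack is inverted relative to the standard route.
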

\begin{proof} See also~\cite[Section~9.1]{St81}.
\end{proof}
\begin{remark} A conditional expectation is automatically completely positive by a theorem of Nakamura, Takesaki, and Umegaki~\cite{NTU60}. In particular, it is a unital \define{Schwarz map}, meaning that $E(A^* A) \geq E(A)^*E(A)$ for all $A \in \A$. We will discuss generalizations of this result for finite-dimensional $C^*$-algebras later in Section~\ref{sec:Binvmgnonf}.
\end{remark}

Another concept that appears in this work is that of a disintegration. To define it, we first recall the notion of a.e.\ equivalence and a.e.\ determinism~\cite{PaBayes}. 

\begin{definition}
\label{defn:ncaeequivalence}
Let $\mathcal{A}$ and $\mathcal{B}$ be $C^*$-algebras, let $\omega:\mathcal{A}\stoch\CC$ be a state on $\mathcal{A}$, and let $F,G:\mathcal{B}\stoch\mathcal{A}$ be $*$-preserving maps. Then $F$ is said to be \define{$\omega$-a.e. equivalent to} $G$, denoted by  $F\underset{\raisebox{.3ex}[0pt][0pt]{\scriptsize$\omega$}}{=}G$, if and only if any of the following equivalent conditions hold.
\begin{enumerate}[i.]
\item
$\omega(AF(B))=\omega(AG(B))$ for all $A\in\mathcal{A}$ and $B\in\mathcal{B}$.
\item
$F(B)-G(B)\in N_\omega$, where 
\[
N_{\omega}:=\big\{A\in\A\;:\;\omega(A^*A)=0\big\}
\]
denotes the \define{nullspace} of $\omega$. 
\end{enumerate}
\end{definition}

The equivalence between the two conditions above is proved in~\cite[Theorem~5.12]{PaBayes}.

\begin{definition}
\label{defn:disint}
A \define{non-commutative}, or \define{quantum}, \define{probability space} is a pair $(\A,\omega)$ consisting of a $C^*$-algebra $\A$ and a state $\omega:\A\stoch\CC$.
The state $\omega$ is said to be \define{faithful} whenever its nullspace $N_{\omega}$ consists of just the zero vector. Otherwise, $\omega$ is said to be \define{non-faithful}. 
The quantum probability space  $(\A,\omega)$ is called \define{non-degenerate} (resp., \define{degenerate}) whenever $\omega$ is faithful (resp., non-faithful).
\end{definition}

\begin{remark}
In the previous definition, the term ``non-commutative'' should be read as ``not necessarily commutative'' and ``quantum'' could also be hybrid quantum/classical. 
Furthermore, in terms of probabilistic concepts, positive maps, drawn as $\B\stoch\A$, and $*$-homomorphisms, drawn as $\B\to\A$, can be interpreted as \emph{stochastic} and \emph{deterministic}, respectively~\cite{FuJa13,Pa17}.
\end{remark}

\begin{definition}
Let $(\mathcal{A},\omega)$ and $(\mathcal{B},\xi)$ be quantum probability spaces.
Let $F:\mathcal{B}\stoch\mathcal{A}$ be a UCP \define{state-preserving map}, i.e., 
$\omega\circ F=\xi$. 
A \define{disintegration} of $(F,\omega)$ is a UCP map
$G:\mathcal{A}\stoch\mathcal{B}$ such that 
$G$ is state-preserving, i.e., $\xi\circ G=\omega$, and $G\circ F\aeequals{\xi}\id_{\B}$.
\end{definition}

\begin{remark}
The motivation for the terminology ``disintegration'' is discussed in~\cite[Appendix A]{PaRu19} and~\cite[Example~7.5]{PaBayes}.
In the notation of Definition~\ref{defn:disint}, if $G$ is state-preserving and satisfies the stronger condition $G\circ F=\id_{\B}$, then $G$ is called an \emph{optimal hypothesis}, see e.g.~\cite{BaFr14,PaRelEnt}. The condition $G\circ F=\id_{\B}$, in the special case when $F$ is a $\ast$-homomorphism and $\B=\A$, appears also in algebraic quantum field theory as the definition of \emph{left-inverse} for $F$, see~\cite[Definition~3.2]{DHR71}, \cite[Section~7]{Lon89}.
\end{remark}

When dealing with degenerate quantum probability spaces, it will be absolutely necessary to generalize the notion of a $*$-homomorphism to allow for an almost everywhere version of it. This is called a.e.\ determinism and is defined explicitly for $C^*$-algebras in the following~\cite[Section~6]{PaBayes}.

\begin{definition}\label{defn:aedeterministic}
Let $(\A,\omega)$ be a quantum probability space, let $\B$ be a $C^*$-algebra, and let $F:\B\xstoch{}\A$ be a positive unital map. Then $F$ is said to be \define{$\omega$-a.e.\ deterministic} iff 
\[
F(B_{1}B_{2})-F(B_{1})F(B_{2})\in N_{\omega}
\qquad\forall\;B_{1},B_{2}\in\B.
\]
\end{definition}

\begin{remark}
If $F$ is $\omega$-a.e.\ deterministic, it is not necessarily the case that it is $\omega$-a.e.\ equivalent to a $*$-homomorphism~\cite[Example~6.5]{PaBayes}. Nevertheless, for von~Neumann algebras $\A$ and $\B$, it is equivalent to the condition $F(B_{1}B_{2})P_{\omega}=F(B_{1})F(B_{2})P_{\omega}$ for all $B_{1},B_{2}\in\B$, where $P_{\omega}$ is the support projection of $\omega$~\cite[Example~6.4]{PaBayes}.
\end{remark}

Lastly, we review the Tomita--Takesaki modular operator and the modular automorphism group for normal faithful states~\cite{Tak70book}. Other standard references include~\cite[\Ch 2.5]{BrRo1} and \cite[Sections~9.1 and~9.2]{Fi96}. For shorter reviews, we recommend \cite{Su06} and \cite[Section~III.A.]{Wi18}. We simplify the following presentation by specializing to the finite-dimensional setting. 

\begin{lemma}
\label{lem:modularconjugation}
Let $\Hilb$ be a finite-dimensional Hilbert space. Let $\M\subseteq\B(\Hilb)$ be a unital $*$-subalgebra and assume that there exists a cyclic and separating vector $\Omega\in\Hilb$ for $\M$, i.e.,  $\Hilb=\M\Omega$ and $A\Omega=0$ for any $A\in\M$ implies $A=0$. Then the assignment 
\begin{equation}
\label{eqn:conjugation_operator}
\begin{split}
\Hilb&\xrightarrow{S_{\Omega}}\Hilb\\
A\Omega&\mapsto A^*\Omega
\end{split}
\end{equation}
is a conjugate-linear involution. 
\end{lemma}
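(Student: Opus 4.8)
The plan is to verify the three defining properties of a conjugate-linear involution—well-definedness, conjugate-linearity, and the involution property $S_{\Omega}^2=\id_{\Hilb}$—in that order, since well-definedness is the only place where the hypotheses genuinely enter.

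First I would address well-definedness, which is the crux of the matter. The formula $A\Omega\mapsto A^*\Omega$ prescribes $S_{\Omega}$ in terms of a chosen operator $A$ representing the vector $A\Omega$, so one must check that the output does not depend on this choice. Cyclicity guarantees that every vector of $\Hilb$ is of the form $A\Omega$ for some $A\in\M$: in finite dimensions $\M\Omega$ is automatically a linear subspace, and the hypothesis $\Hilb=\M\Omega$ says it is the whole space. Hence $S_{\Omega}$ is defined on all of $\Hilb$. Now suppose $A\Omega=B\Omega$ with $A,B\in\M$. Then $(A-B)\Omega=0$, and since $A-B\in\M$ and $\Omega$ is separating, this forces $A-B=0$, whence $A^*=B^*$ and therefore $A^*\Omega=B^*\Omega$. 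Thus the assignment is unambiguous.

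Next, conjugate-linearity follows formally. For $A,B\in\M$ and $\lambda\in\CC$, writing a general pair of vectors as $A\Omega$ and $B\Omega$, I would use $(A+B)\Omega=A\Omega+B\Omega$ and $(\lambda A)\Omega=\lambda(A\Omega)$ to compute $S_{\Omega}(A\Omega+B\Omega)=(A+B)^*\Omega=A^*\Omega+B^*\Omega$ and $S_{\Omega}(\lambda A\Omega)=(\lambda A)^*\Omega=\bar\lambda\,A^*\Omega$, with the complex conjugate $\bar\lambda$ appearing precisely because the adjoint $A\mapsto A^*$ is antilinear. Finally, the involution property is immediate: applying the defining formula twice yields $S_{\Omega}\big(S_{\Omega}(A\Omega)\big)=S_{\Omega}(A^*\Omega)=(A^*)^*\Omega=A\Omega$, so that $S_{\Omega}^2=\id_{\Hilb}$; in particular $S_{\Omega}$ is a bijection equal to its own inverse.

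There is no serious obstacle here, and indeed the entire content of the lemma is conceptual rather than computational: the separating property is exactly what makes the naive formula consistent, while cyclicity ensures its domain is all of $\Hilb$. I would not expect any subtlety from finite-dimensionality beyond the fact that it makes $\M\Omega$ a closed (indeed full) subspace automatically, so that no completion, continuity, or density argument is needed—features that would have to be handled with care in the general von~Neumann algebra setting where $S_{\Omega}$ is only densely defined and merely closable.
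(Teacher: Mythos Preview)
Your proposal is correct and follows essentially the same approach as the paper's own proof, which is a terse one-liner noting that the cyclic and separating assumptions guarantee well-definedness and that conjugate-linearity and involutivity are manifest. You have simply spelled out these verifications in full detail.
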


\begin{proof}
The cyclic and separating condition guarantees well-definedness of $S_{\Omega}$. The map is manifestly conjugate-linear and involutive, i.e., $S_{\Omega}^{2}=\mathrm{id}_{\mathcal{H}}$.  
\end{proof}

\begin{definition}
Let $S_{\Omega}=J_{\Omega}\Delta_{\Omega}^{1/2}$ be the polar decomposition of~(\ref{eqn:conjugation_operator}), where $\Delta_{\Omega}$ is positive definite and $J_{\Omega}$ is an antiunitary involution.
The maps $\Delta_{\Omega}=S_{\Omega}^*S_{\Omega}$, where $\langle S_{\Omega}^{*}x,y\rangle=\langle S_{\Omega}y,x\rangle$ for all $x,y\in\mathcal{H}$, and $J_{\Omega}$ are called the \define{modular operator} and \define{modular conjugation} of $(\M, \Omega)$, respectively. 
\end{definition}

\begin{lemma}
\label{lem:moduatfaithfulstate}
Let $\Hilb,$ $\M$, and $\Omega$ be as in Lemma~\ref{lem:modularconjugation}. Then 
\[
\Delta_{\Omega}\Omega=\Omega,\qquad J_{\Omega}^2=\mathrm{id}_{\Hilb},\quad\text{and}\qquad
J_{\Omega}\M J_{\Omega}=\M',
\]
where $\M'\subseteq\B(\Hilb)$ is the commutant of $\M$ inside $\B(\Hilb)$.
Furthermore, $\Delta_{\Omega}^{it}$ is unitary for all $t\in\RR$ and the assignment
\[
\RR\ni t\mapsto\mg^{t}_{(\M,\Omega)}:=\mathrm{Ad}(\Delta_{\Omega}^{it})
\]
is a one-parameter group of $*$-automorphisms of $\M$. 
Finally, the induced state
\[
\M\ni m\xmapsto{\omega}\frac{\langle \Omega,m\Omega\rangle}{\lVert\Omega\rVert^2}
\]
is a faithful state on $\M$ satisfying 
\[
\omega\circ\mg^{t}_{(\M,\Omega)}=\omega\qquad\forall\;t\in\RR.
\]
\end{lemma}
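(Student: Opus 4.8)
The plan is to dispatch the purely spectral assertions first and then reduce the two substantive claims---the commutation theorem $J_\Omega\M J_\Omega=\M'$ and the invariance $\Delta_\Omega^{it}\M\Delta_\Omega^{-it}=\M$---to an explicit computation in the left regular (GNS) representation. Several statements require essentially no work. Since $\Delta_\Omega=S_\Omega^*S_\Omega$ is manifestly positive and is invertible because $S_\Omega$ is (being an involution), it is positive definite, so $\Delta_\Omega^{it}=\e^{\ima t\log\Delta_\Omega}$ is unitary for every $t\in\RR$ by the spectral calculus. Faithfulness of $\omega$ is immediate from the separating property: $\omega(m^*m)=\lVert m\Omega\rVert^2/\lVert\Omega\rVert^2=0$ forces $m\Omega=0$, hence $m=0$. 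Finally, once $\Delta_\Omega\Omega=\Omega$ is known, invariance is a one-line computation, $\omega(\Delta_\Omega^{it}m\Delta_\Omega^{-it})=\langle\Delta_\Omega^{-it}\Omega,m\Delta_\Omega^{-it}\Omega\rangle/\lVert\Omega\rVert^2=\langle\Omega,m\Omega\rangle/\lVert\Omega\rVert^2=\omega(m)$, where I use $(\Delta_\Omega^{it})^*=\Delta_\Omega^{-it}$ and $\Delta_\Omega^{it}\Omega=\Omega$, the latter following from $\Delta_\Omega\Omega=\Omega$ by the spectral calculus.

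For $\Delta_\Omega\Omega=\Omega$ and the involutivity $J_\Omega^2=\mathrm{id}_{\Hilb}$ I would argue abstractly from the polar decomposition. Because $S_\Omega$ is an involution, so is $S_\Omega^*$, whence $S_\Omega S_\Omega^*=(S_\Omega^*S_\Omega)^{-1}=\Delta_\Omega^{-1}$. Substituting $S_\Omega=J_\Omega\Delta_\Omega^{1/2}$ and using $(J_\Omega\Delta_\Omega^{1/2})^*=\Delta_\Omega^{1/2}J_\Omega^*$ gives $J_\Omega\Delta_\Omega J_\Omega^*=\Delta_\Omega^{-1}$; since $J_\Omega^*=J_\Omega^{-1}$ is antilinear, taking square roots yields $J_\Omega\Delta_\Omega^{1/2}J_\Omega^{-1}=\Delta_\Omega^{-1/2}$. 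Feeding this back into $S_\Omega^2=\mathrm{id}_{\Hilb}$ produces $\Delta_\Omega^{-1/2}J_\Omega^2\Delta_\Omega^{1/2}=\mathrm{id}_{\Hilb}$, and rearranging against the invertible $\Delta_\Omega^{1/2}$ gives $J_\Omega^2=\mathrm{id}_{\Hilb}$. For $\Delta_\Omega\Omega=\Omega$ I would note that $S_\Omega\Omega=\Omega$ (take $A=\oneop$) and that $S_\Omega^*$ acts on $\M'\Omega=\Hilb$---$\Omega$ being cyclic for $\M'$ because it is separating for $\M$---as the commutant conjugation $A'\Omega\mapsto(A')^*\Omega$, so that $\Delta_\Omega\Omega=S_\Omega^*S_\Omega\Omega=S_\Omega^*\Omega=\Omega$.

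The heart of the proof is the reduction to left multiplication. Since $\Omega$ is cyclic and separating, $m\mapsto m\Omega$ is a linear isomorphism $\M\to\Hilb$; transporting the inner product along it (normalizing $\lVert\Omega\rVert=1$) identifies $\Hilb$ unitarily with $\M$ equipped with $\langle a,b\rangle=\omega(a^*b)$, on which $\M$ acts by left multiplication and $\Omega$ corresponds to $\oneop$. Writing $\omega=\tau(h\,\slot)$ for a faithful trace $\tau$ on the multi-matrix algebra $\M\cong\bigoplus_i\Mat_{n_i}(\CC)$ and a positive $h\in\M$---invertible precisely because $\omega$ is faithful---a direct computation identifies $S_\Omega$ with $a\mapsto a^*$, hence $\Delta_\Omega$ with $a\mapsto hah^{-1}$ and $J_\Omega$ with $a\mapsto h^{1/2}a^*h^{-1/2}$ (which re-proves $\Delta_\Omega\oneop=\oneop$ and $J_\Omega^2=\mathrm{id}_{\Hilb}$). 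From the first, $\Delta_\Omega^{it}$ acts as $a\mapsto h^{it}ah^{-it}$, so conjugation by $\Delta_\Omega^{it}$ sends left multiplication by $a$ to left multiplication by $h^{it}ah^{-it}\in\M$; thus $\mg^t_{(\M,\Omega)}=\mathrm{Ad}(h^{it})$ is a $*$-automorphism of $\M$, with the one-parameter group law inherited from $t\mapsto h^{it}$. From the second, conjugating left multiplication by $a$ with $J_\Omega$ yields right multiplication by $h^{1/2}a^*h^{-1/2}$, so $J_\Omega\M J_\Omega$ is exactly the algebra of right multiplications by elements of $\M$.

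The one genuinely structural input---and the step I expect to be the main obstacle---is the identification of that algebra of right multiplications with the commutant $\M'$. This is the standard-form statement that the commutant of the left regular representation of a finite-dimensional $C^*$-algebra is the right regular representation. I would prove it via the decomposition $\M\cong\bigoplus_i\Mat_{n_i}(\CC)$, under which $\Hilb\cong\bigoplus_i\CC^{n_i}\otimes\overline{\CC^{n_i}}$ carries left multiplication as $\bigoplus_i\Mat_{n_i}(\CC)\otimes\oneop$ and right multiplication as $\bigoplus_i\oneop\otimes\Mat_{n_i}(\CC)$. Right multiplications always commute with left multiplications, so one containment is automatic, and equality follows from the dimension count $\dim\M'=\sum_i n_i^2=\dim\M$. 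Combining this with the previous paragraph gives $J_\Omega\M J_\Omega=\M'$ and $\Delta_\Omega^{it}\M\Delta_\Omega^{-it}=\M$, completing the proof.
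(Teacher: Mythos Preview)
Your proof is correct and self-contained. The paper, however, does not prove this lemma at all: it simply cites \cite[\Thm 2.5.14]{BrRo1} and moves on. So there is no ``same approach'' to compare against---you have supplied what the paper outsources.

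That said, it is worth noting that your argument differs in spirit from the cited reference. Bratteli--Robinson develops modular theory in the general von~Neumann algebra setting, where unboundedness of $S_\Omega$ and $\Delta_\Omega$ forces domain-theoretic care and the commutation theorem $J_\Omega\M J_\Omega=\M'$ requires genuine analytic input. You instead exploit finite-dimensionality throughout: writing $\omega=\tau(h\,\slot)$ for an invertible positive $h$ lets you identify $\Delta_\Omega$ and $J_\Omega$ explicitly as $a\mapsto hah^{-1}$ and $a\mapsto h^{1/2}a^*h^{-1/2}$, reducing everything to algebra. The commutation theorem then becomes the elementary fact that the commutant of left multiplication on $\bigoplus_i\matr_{n_i}(\CC)$ is right multiplication, which you dispatch by a dimension count. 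This is a cleaner route in the finite-dimensional context the paper works in, and in fact anticipates the concrete formulas $\mg^t=\Ad(\rho^{it})$ that the paper derives separately in Lemmas~\ref{lem:modulargroupinvertibledensitymatrix} and~\ref{lem:modulargroupinvertiblefdCAlg}.

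One minor remark: in your abstract argument for $\Delta_\Omega\Omega=\Omega$ you invoke that $\Omega$ is cyclic for $\M'$ because it is separating for $\M$. This is true but itself nontrivial in general; in finite dimensions it follows from the dimension count $\dim\M'\Omega=\dim\Hilb-\dim\{m\in\M:m^*\Omega=0\}^\perp$ or, more simply, you could bypass it entirely since your explicit GNS computation already gives $\Delta_\Omega\oneop=h\oneop h^{-1}=\oneop$ directly.
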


\begin{proof}
See \cite[\Thm 2.5.14]{BrRo1}.
\end{proof}

\begin{definition}\label{defn:modulargroup}
The one-parameter automorphism group  constructed in Lemma~\ref{lem:moduatfaithfulstate} is called the \define{modular automorphism group} of $(\M,\Omega)$ inside $\B(\H)$.
More generally, let $\M$ be a unital finite-dimensional $C^*$-algebra and let $\omega$ be a normal faithful state on $\M$.  The \define{modular automorphism group} of $(\M,\omega)$ is the modular automorphism group of $(\M,\Omega)$, where $\Omega$ is any unit vector inducing the state $\omega$ (such as from the GNS representation). Since it only depends on $\M$ and $\omega$, this automorphism group will be denoted by $\RR\ni t\mapsto\mg_{(\M,\omega)}^{t}\in\mathrm{Aut}(\M)$. 
\end{definition}

In what follows, we illustrate what the modular automorphism group looks like for faithful states on matrix and multi-matrix algebras over $\CC$. In the terminology of \cite[\Ch 2]{GdHJ89}, a \emph{multi-matrix algebra} is a finite direct sum of matrix algebras, i.e., an arbitrary finite-dimensional $C^*$-algebra up to $*$-isomorphism.

\begin{notation}
For the matrix algebra $\matr_{m}(\CC)$, we denote by $\tr$ the \emph{unnormalized trace} (so that $\tr(\oneop_m)=m$, where $\oneop_m$ is the identity matrix). When multiple matrix algebras appear in the same formula, the size $m$ of the trace $\tr$ will be clear by the matrices it is evaluated on. 
\end{notation}

\begin{lemma}
\label{lem:modulargroupinvertibledensitymatrix}
Let $\rho$ be an invertible density matrix on $\M:=\matr_{m}(\CC)$ with associated faithful state $\omega:=\tr(\rho\;\cdot\;)$. Then $\log(\rho)$ is a negative operator, i.e., it is self-adjoint and all its eigenvalues are less than or equal to $0$, and 
the modular automorphism group of $(\M,\omega)$ is given by 
\[
\RR\ni t\mapsto\mg^t_{(\M,\omega)} = \Ad(\rho^{it}) \equiv \Ad(e^{it\log(\rho)}).
\]
\end{lemma}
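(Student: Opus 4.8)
The plan is to treat the two assertions separately, the negativity of $\log(\rho)$ being elementary and the formula for the modular group requiring an explicit realization of the modular data. For the first claim, note that a density matrix $\rho$ is self-adjoint and positive, and invertibility forces its eigenvalues $\lambda_1,\dots,\lambda_m$ to be strictly positive; since $\tr(\rho)=\sum_i\lambda_i=1$, each satisfies $0<\lambda_i\le 1$. By the continuous functional calculus $\log(\rho)$ is then self-adjoint with eigenvalues $\log(\lambda_i)\le 0$, which is exactly the statement. This disposes of the first claim in a couple of lines.

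For the modular group, the key is to choose a concrete model of the GNS representation in which $S_\Omega$, $J_\Omega$, and $\Delta_\Omega$ can be written down by hand. I would take $\Hilb:=\matr_m(\CC)$ with the Hilbert--Schmidt inner product $\langle A,B\rangle=\tr(A^*B)$, let $\M=\matr_m(\CC)$ act by left multiplication $m\mapsto L_m$ (where $L_m(X)=mX$), and set $\Omega:=\rho^{1/2}$. Then $\lVert\Omega\rVert^2=\tr(\rho)=1$ and $\langle\Omega,L_m\Omega\rangle=\tr(\rho^{1/2}m\rho^{1/2})=\tr(\rho m)=\omega(m)$, so this vector state reproduces $\omega$; invertibility of $\rho^{1/2}$ makes right multiplication by it a bijection of $\matr_m(\CC)$, giving both cyclicity ($\M\Omega=\matr_m(\CC)=\Hilb$) and the separating property. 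Hence $(\M,\Omega)$ satisfies the hypotheses of Lemma~\ref{lem:modularconjugation}, and by Definition~\ref{defn:modulargroup} it computes the modular group of $(\M,\omega)$.

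Next I would identify the modular data explicitly. Writing a general $X\in\Hilb$ as $X=A\rho^{1/2}$ with $A=X\rho^{-1/2}$, the defining formula $S_\Omega(A\Omega)=A^*\Omega$ yields $S_\Omega(X)=\rho^{-1/2}X^*\rho^{1/2}$. I would then propose the polar decomposition $J_\Omega(X)=X^*$ and $\Delta_\Omega^{1/2}(X)=\rho^{1/2}X\rho^{-1/2}$ and verify the three requirements: $J_\Omega$ is a conjugate-linear involution that is isometric for the inner product (by cyclicity of the trace), hence an antiunitary involution; $\Delta_\Omega(X)=\rho X\rho^{-1}=L_\rho R_{\rho^{-1}}(X)$ is the product of two commuting self-adjoint positive invertible operators (left multiplication by $\rho$ and right multiplication by $\rho^{-1}$), hence positive definite; and their composite recovers $S_\Omega$. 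By uniqueness of the polar decomposition of the invertible operator $S_\Omega$, these are the modular conjugation and modular operator. As a cross-check one can instead compute $\Delta_\Omega=S_\Omega^*S_\Omega$ directly from $\langle S_\Omega^*x,y\rangle=\langle S_\Omega y,x\rangle$, which again gives $\Delta_\Omega(X)=\rho X\rho^{-1}$.

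Finally, since $L_\rho$ and $R_{\rho^{-1}}$ commute, the functional calculus gives $\Delta_\Omega^{it}(X)=\rho^{it}X\rho^{-it}$ for all $t\in\RR$, and a direct computation shows $\Ad(\Delta_\Omega^{it})(L_m)=\Delta_\Omega^{it}L_m\Delta_\Omega^{-it}=L_{\rho^{it}m\rho^{-it}}$. Under the identification of $\M$ with its image under left multiplication, this reads $\mg^t_{(\M,\omega)}=\Ad(\rho^{it})$, completing the proof. I expect the only real subtlety to be bookkeeping around the conjugate-linear operator $S_\Omega$ --- in particular getting the adjoint $S_\Omega^*$ right and confirming that the convention $\mg^t=\Ad(\Delta_\Omega^{it})$ produces $\Ad(\rho^{it})$ rather than its inverse --- while the cyclicity/separating and the positivity/antiunitarity verifications are routine.
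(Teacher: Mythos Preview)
Your proof is correct. The paper, by contrast, simply cites \cite[Example~2.5.16]{BrRo1} without giving any argument, and the proof of the subsequent lemma (for multi-matrix algebras) makes clear that the intended method is to verify the KMS condition: one checks that $t\mapsto\Ad(\rho^{it})$ is an $\omega$-preserving one-parameter automorphism group satisfying the $\beta=1$ KMS condition for $\omega$, and then appeals to the uniqueness of the modular group among such flows. Your route is different and more direct: you build the GNS representation explicitly on the Hilbert--Schmidt space with cyclic vector $\Omega=\rho^{1/2}$, write down $S_\Omega$, $J_\Omega$, and $\Delta_\Omega=L_\rho R_{\rho^{-1}}$ by hand, and read off $\Delta_\Omega^{it}$. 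Your approach has the advantage of exhibiting all the modular data concretely and being fully self-contained, with no appeal to the (nontrivial) uniqueness theorem for KMS flows; the KMS approach is shorter if one is willing to quote that uniqueness, and it generalizes with no extra work to the multi-matrix case as the paper does in the next lemma.
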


\begin{proof}
See \cite[Example 2.5.16]{BrRo1}.
\end{proof}

\begin{lemma}
\label{lem:modulargroupinvertiblefdCAlg}
Let $\omega=\sum_{x\in X}p_{x}\tr(\rho_{x}\;\cdot\;)$ be a faithful state on $\A:=\bigoplus_{x\in X}\matr_{m_{x}}(\CC)=:\bigoplus_{x\in X}\A_{x}$, where $X$ is a finite set, $m_{x}\in\NN$, $(p_{x})_{x\in X}$ defines a nowhere vanishing probability measure on $X$, and each $\rho_{x}\in \matr_{m_{x}}(\CC)$ is an invertible density matrix. Then the modular group of $(\A,\omega)$ is given by 
\[
\RR\ni t\mapsto \mg^{t}_{(\A,\omega)}=\bigoplus_{x\in X}\mathrm{Ad}(\rho_{x}^{it})\equiv\bigoplus_{x\in X}\mathrm{Ad}\big(e^{it\log(\rho_{x})}\big).
\]
\end{lemma}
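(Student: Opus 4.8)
The plan is to reduce everything to the single matrix-algebra computation of Lemma~\ref{lem:modulargroupinvertibledensitymatrix} by exploiting the fact that the GNS (standard) representation of $(\A,\omega)$ splits as an orthogonal direct sum indexed by $X$, and that all the modular data respects this splitting. Since, by Definition~\ref{defn:modulargroup}, the modular automorphism group depends only on $\A$ and $\omega$, I am free to realize it through any convenient cyclic and separating vector.

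First I would set up the standard representation. As $\omega$ is faithful, $N_\omega=0$, so the GNS Hilbert space is $\Hilb=\A$ with inner product $\langle a,b\rangle=\omega(a^*b)$, with $\A$ acting by left multiplication and cyclic-separating vector $\Omega=\oneop_\A$. Writing $a=\bigoplus_x a_x$ under $\A=\bigoplus_x\A_x$ and using $\omega(a^*b)=\sum_x p_x\tr(\rho_x a_x^* b_x)$, the inner product is the orthogonal sum $\langle a,b\rangle=\sum_x\langle a_x,b_x\rangle_x$ of the block inner products $\langle a_x,b_x\rangle_x:=p_x\tr(\rho_x a_x^* b_x)$. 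Hence $\Hilb=\bigoplus_x\Hilb_x$ orthogonally with $\Hilb_x=\A_x$, and $\Omega=\bigoplus_x\Omega_x$ with $\Omega_x=\oneop_{\A_x}$; each $\Omega_x$ is cyclic and separating for $\A_x$ on $\Hilb_x$ because $p_x\tr(\rho_x\,\cdot\,)$ is faithful.

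Next I would observe that the conjugation $S_\Omega$ is block-diagonal. Since $S_\Omega(a\Omega)=a^*\Omega$ reduces to $a\mapsto a^*$, and the $*$-operation preserves each summand $\A_x$ (the decomposition $\A=\bigoplus_x\A_x$ is one of $*$-ideals with $\A_x\A_y=0$ for $x\neq y$), one has $S_\Omega=\bigoplus_x S_{\Omega_x}$ with $S_{\Omega_x}\colon a_x\mapsto a_x^*$. Polar decomposition then respects the direct sum, giving $\Delta_\Omega=S_\Omega^*S_\Omega=\bigoplus_x\Delta_{\Omega_x}$ and $\Delta_\Omega^{it}=\bigoplus_x\Delta_{\Omega_x}^{it}$. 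Consequently, for $a=\bigoplus_x a_x$ the adjoint action is computed blockwise, so that $\mg^t_{(\A,\omega)}=\Ad(\Delta_\Omega^{it})$ equals $\bigoplus_x\Ad(\Delta_{\Omega_x}^{it})=\bigoplus_x\mg^t_{(\A_x,\Omega_x)}$.

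Finally I would invoke Lemma~\ref{lem:modulargroupinvertibledensitymatrix} on each block. The vector $\Omega_x$ induces on $\A_x=\matr_{m_x}(\CC)$ the normalized state $\langle\Omega_x,\,\cdot\,\Omega_x\rangle_x/\lVert\Omega_x\rVert_x^2=\tr(\rho_x\,\cdot\,)$, whose density matrix is the invertible $\rho_x$, so Lemma~\ref{lem:modulargroupinvertibledensitymatrix} yields $\mg^t_{(\A_x,\Omega_x)}=\Ad(\rho_x^{it})$, and assembling the blocks gives the claimed formula. I expect the only genuinely delicate point to be the bookkeeping of the weights $p_x$: they enter the block density matrix as $p_x\rho_x$, but since $p_x>0$ the scalar $p_x^{it}$ is unimodular and cancels in the adjoint action, $\Ad((p_x\rho_x)^{it})=\Ad(p_x^{it}\rho_x^{it})=\Ad(\rho_x^{it})$; equivalently the normalization $\lVert\Omega_x\rVert_x^2=p_x$ absorbs it. Everything else is the routine verification that polar decomposition and $\Ad$ commute with orthogonal direct sums.
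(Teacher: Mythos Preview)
Your argument is correct, but it follows a different route from the paper. The paper's proof simply verifies that the candidate one-parameter group $t\mapsto\bigoplus_x\Ad(\rho_x^{it})$ satisfies the KMS condition with respect to $\omega$, and then invokes the uniqueness of the modular automorphism group as the KMS automorphism group (citing \cite[Example~2.5.16]{BrRo1}). You instead work directly from Definition~\ref{defn:modulargroup}: you build the GNS representation, observe that it decomposes orthogonally over $X$, check that $S_\Omega$ and hence $\Delta_\Omega$ are block-diagonal, and reduce to Lemma~\ref{lem:modulargroupinvertibledensitymatrix} on each block. Your approach is more explicit and self-contained---it does not require the KMS uniqueness theorem---at the cost of a bit more bookkeeping (the orthogonality of the blocks, the behaviour of polar decomposition under direct sums, and the harmless role of the weights $p_x$, all of which you handle correctly). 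The paper's approach is terser but leans on a deeper structural result.
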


\begin{proof}
It follows as in the matrix case \cite[Example 2.5.16]{BrRo1} by checking that the KMS condition with respect to $\omega$ is fulfilled by $\RR\ni t\mapsto \bigoplus_{x\in X}\mathrm{Ad}(\rho_{x}^{it})$.
\end{proof}

\begin{remark}
Note that, in the multi-matrix case, the states $\omega$ corresponding to different choices of $p_x > 0$, $\sum_{x\in X} p_x = 1$ give the same modular automorphism group of $\A$.
\end{remark}

For future reference, we also state and prove some lemmas that will be needed later. First, we recall a general representation formula for conditional expectations between type $I$ factors~\cite[\Prop 2.4]{Tsu91}. Given two von~Neumann algebras $\A$ and $\B$ realized on the same Hilbert space, we denote by $\A\vee\B$ the von~Neumann algebra generated by $\A$ and $\B$, i.e., the smallest von~Neumann algebra inside the von~Neumann algebra of bounded operators containing both $\A$ and $\B$.

\begin{lemma}\label{lem:Epartialtrace}
Let $\N \subseteq\M$ be a type $I$ subfactor. Namely, $\N \cong \matr_n(\CC)$, $\M \cong \matr_m(\CC)$ and 
$\oneop_k \otimes \matr_n(\CC)\cong\N\subseteq\M \cong \matr_k(\CC) \otimes \matr_n(\CC)$, with $m = n k$, for some $n,m,k \in \NN$ or $n,m = \infty$. 
Every normal (not necessarily faithful) conditional expectation $E:\mathcal{M}\stoch\mathcal{M}$ onto $\mathcal{N}$ can be represented as a partial trace. Namely, there is a (not necessarily invertible) density matrix $\tau \in \matr_k(\CC)$ such that $E = \tr(\tau \slot)\mathds{1}_{k} \otimes \id_n$, where $\id_n:\matr_n(\CC)\to\matr_n(\CC)$ is the identity map.
\end{lemma}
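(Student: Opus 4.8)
The plan is to pin down $E$ first on the relative commutant $\N'\cap\M$ and then to propagate the answer to all of $\M$ by bimodularity. I will identify $\M\cong\matr_k(\CC)\otimes\matr_n(\CC)$ and $\N\cong\oneop_k\otimes\matr_n(\CC)$, so that $\N'\cap\M=\matr_k(\CC)\otimes\oneop_n$. At the outset I record that $E$, being a conditional expectation onto $\N$, is positive, unital, and left $\N$-modular, hence $\N$-bimodular (every positive left $\N$-modular projection is bimodular, as noted after Definition~\ref{defn:conditionalexpectation}), with $E(\M)\subseteq\N$.

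The key step is to observe that $E$ maps $\N'\cap\M$ into the centre of $\N$. Fixing $X\in\matr_k(\CC)$ and $Y\in\matr_n(\CC)$, the element $X\otimes\oneop_n$ commutes with $\oneop_k\otimes Y\in\N$, so bimodularity gives
\[
(\oneop_k\otimes Y)\,E(X\otimes\oneop_n)=E\big((\oneop_k\otimes Y)(X\otimes\oneop_n)\big)=E\big((X\otimes\oneop_n)(\oneop_k\otimes Y)\big)=E(X\otimes\oneop_n)\,(\oneop_k\otimes Y).
\]
Hence $E(X\otimes\oneop_n)$ lies in $\N$ and commutes with all of $\N$; since $\N\cong\matr_n(\CC)$ is a factor, with centre $\CC\oneop$, this forces $E(X\otimes\oneop_n)=\phi(X)\,\oneop$ for some scalar $\phi(X)$. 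The induced functional $\phi:\matr_k(\CC)\to\CC$ is then linear, positive (as $E$ is positive), and unital (as $E(\oneop)=\oneop$), i.e.\ a state, so it is represented by a density matrix $\tau\in\matr_k(\CC)$ with $\phi=\tr(\tau\,\slot)$; this $\tau$ is invertible exactly when $E$ is faithful, which is why one only obtains a not-necessarily-invertible $\tau$ in general.

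Finally I would recover $E$ everywhere by right $\N$-modularity: on simple tensors,
\[
E(X\otimes Y)=E\big((X\otimes\oneop_n)(\oneop_k\otimes Y)\big)=E(X\otimes\oneop_n)\,(\oneop_k\otimes Y)=\tr(\tau X)\,(\oneop_k\otimes Y),
\]
which is precisely $\big(\tr(\tau\,\slot)\otimes\id_n\big)(X\otimes Y)$ under the identification $\N\cong\oneop_k\otimes\matr_n(\CC)$, and linearity extends the formula to all of $\M$. In finite dimensions this is the whole argument and the statement offers little resistance. The one delicate point I anticipate is the case $n,m=\infty$, where $\matr_n(\CC)$ and $\matr_k(\CC)$ stand for $\B(\Hilb)$-type factors: there I would invoke normality of $E$ to ensure that $\phi$ is normal and therefore represented by a genuine trace-class density matrix rather than an arbitrary positive unital functional, and I would confirm that the relative commutant is still $\matr_k(\CC)\otimes\oneop_n$ and that the centre of a type $I$ factor remains $\CC\oneop$. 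These are standard facts, so the argument carries over verbatim.
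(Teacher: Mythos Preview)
Your proof is correct and follows essentially the same route as the paper: restrict $E$ to the relative commutant $\N'\cap\M\cong\matr_k(\CC)\otimes\oneop_n$, observe (via factoriality of $\N$) that this restriction is a normal state and hence given by a density matrix $\tau$, and then extend to all of $\M$ by $\N$-bimodularity on simple tensors. Your treatment is slightly more explicit in deriving that $E(X\otimes\oneop_n)$ lands in the centre of $\N$ and in flagging where normality is used in the infinite case, but the substance is identical.
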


\begin{proof}
For type $I$ subfactors, $\M = (\N'\cap\M) \vee \N \cong (\N'\cap\M) \otimes \N$ and $\N'\cap\M$ is a type $I$ factor. The restriction $E_{\restriction}: \N'\cap\M \stoch \N'\cap\N \cong \CC \oneop_m$ is a normal state on the relative commutant. Thus $E_{\restriction}$ is represented by a unique positive, not necessarily invertible, trace one operator $\tau\in \N'\cap\M$ by the formula
\begin{align*}
E(A \otimes \oneop_n) = \tr(\tau A)\mathds{1}_{k}\otimes \oneop_n
\end{align*}
where $A\in\N'\cap\M$ and $\tr$ is the trace on $\N'\cap\M \cong \matr_k(\CC)$.
The representation formula has a unique extension to simple tensors in $\M$ by $\N$-bimodularity, namely
\begin{align*}
E(A \otimes B) = E\big((A \otimes \oneop_n)(\oneop_k \otimes B)\big) = \tr(\tau A) \oneop_k \otimes B
\end{align*}
for $A\in \N'\cap\M$, $B\in \N$, and thus to $\M$. 
\end{proof}

\subsection{Disintegrations on matrix algebras}

In the first proposition below, we prove several equivalent conditions for disintegrations to exist on matrix algebras equipped with \emph{faithful} states. We prove this directly using only methods of linear algebra and complex analysis (as opposed to the full power of Takesaki's theorem and modular theory \cite{Tak72}) because the techniques used here will also be used later in this work.

\begin{proposition}\label{prop:disintegtypeIsubf}
Let $F:\matr_{n}(\CC)\to \matr_{kn}(\CC)\cong \matr_{k}(\CC)\otimes \matr_{n}(\CC)$ be given by $F(A):=\mathds{1}_{k}\otimes A$ and let $\omega\equiv\tr(\rho\;\cdot\;)$ be a faithful state on $\matr_{kn}(\CC)$ that pulls back to $\xi=\tr(\sigma\;\cdot\;)$ along $F$. Let $\mg_{(\matr_{kn}(\CC),\omega)}^t$, $t\in\RR$,  denote the modular group associated with $(\matr_{kn}(\CC),\omega)$. Then, the following conditions are equivalent. 
\begin{enumerate}[i.]
\item
\label{item:tripledisint}
The pair $(F,\omega)$ admits a disintegration.
\item
\label{item:spcondexp}
There exists an $\omega$-preserving conditional expectation $E:\matr_{kn}(\CC)\stoch \matr_{kn}(\CC)$ onto the subalgebra $\mathds{1}_{k}\otimes \matr_{n}(\CC)$. 
\item
\label{item:producttensor}
There exists an invertible density matrix $\tau\in \matr_{k}(\CC)$ such that $\rho=\tau\otimes\sigma$.
\item
\label{item:invarmodgrp}
The modular group $\mg^t_{(\matr_{kn}(\CC),\omega)}$ leaves the subalgebra $F(\matr_n(\CC)) = \oneop_k\otimes \matr_n(\CC) \subseteq\matr_{kn}(\CC)$ invariant for every $t\in\RR$.
\end{enumerate}
\end{proposition}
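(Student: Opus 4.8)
The plan is to show the four conditions are equivalent by proving a chain of implications, using the partial trace representation of Lemma~\ref{lem:Epartialtrace} and the explicit form of the modular group from Lemma~\ref{lem:modulargroupinvertibledensitymatrix}, namely $\mg^t_{(\matr_{nk}(\CC),\omega)}=\Ad(\rho^{it})$. I would first dispatch the equivalence (\ref{item:tripledisint})$\Leftrightarrow$(\ref{item:spcondexp}), which is purely formal and uses only that $F$ is a unital $*$-isomorphism onto its image $\N:=\mathds{1}_k\otimes\matr_n(\CC)$ and that $\xi$ is faithful, so that the a.e.\ condition $G\circ F\aeequals{\xi}\id$ upgrades to $G\circ F=\id$. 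Given a disintegration $G$, the composite $E:=F\circ G$ is idempotent, unital completely positive, restricts to the identity on $\N$, and satisfies $\omega\circ E=\xi\circ G=\omega$; by Tomiyama's theorem~\ref{thm:tomiyama} it is an $\omega$-preserving conditional expectation onto $\N$. Conversely, given such an $E$, the map $G:=F^{-1}\circ{}_{\N\corestriction}E$ is unital completely positive with $G\circ F=\id$ and $\xi\circ G=\omega$, hence a disintegration.

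Next I would prove (\ref{item:spcondexp})$\Leftrightarrow$(\ref{item:producttensor}). Lemma~\ref{lem:Epartialtrace} represents any conditional expectation onto the type $I$ subfactor $\N$ as a partial trace $E=\tr(\tau\,\slot)\otimes\id_n$ for a density matrix $\tau\in\matr_k(\CC)$. Noting first that $\sigma=\tr_1(\rho)$ is the reduced density matrix (the partial trace of $\rho$ over the $\matr_k(\CC)$ factor), a direct computation on simple tensors $A\otimes B$ shows $\tr(\rho\,E(A\otimes B))=\tr(\tau A)\tr(\sigma B)=\tr\big((\tau\otimes\sigma)(A\otimes B)\big)$, so that the trace-dual of $E$ sends $\rho$ to $\tau\otimes\sigma$. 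Hence $\omega\circ E=\omega$ holds if and only if $\tr\big((\tau\otimes\sigma)M\big)=\tr(\rho M)$ for all $M$, i.e.\ $\rho=\tau\otimes\sigma$; invertibility of $\tau$ then follows from that of $\rho$. The reverse implication runs the same computation backwards, starting from $\rho=\tau\otimes\sigma$ and setting $E:=\tr(\tau\,\slot)\otimes\id_n$.

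Finally I would close the circle with (\ref{item:producttensor})$\Leftrightarrow$(\ref{item:invarmodgrp}). The implication (\ref{item:producttensor})$\Rightarrow$(\ref{item:invarmodgrp}) is immediate: if $\rho=\tau\otimes\sigma$ then $\rho^{it}=\tau^{it}\otimes\sigma^{it}$ and $\Ad(\rho^{it})(\mathds{1}_k\otimes A)=\mathds{1}_k\otimes\sigma^{it}A\sigma^{-it}\in\N$. I expect the converse (\ref{item:invarmodgrp})$\Rightarrow$(\ref{item:producttensor}) to be the main obstacle, since this is where modular invariance must be converted into an algebraic factorization without invoking Takesaki's theorem wholesale. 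My plan is to differentiate the invariance condition at $t=0$: since $\Ad(\rho^{it})$ preserves the linear subspace $\N$ for all $t$, the generator satisfies $[\log\rho,\N]\subseteq\N$, and conversely. Writing $h:=\log\rho=\sum_{a,b}E_{ab}\otimes x_{ab}$ in matrix units of $\matr_k(\CC)$ with $x_{ab}\in\matr_n(\CC)$, the condition $[h,\mathds{1}_k\otimes A]\in\mathds{1}_k\otimes\matr_n(\CC)$ for every $A$ forces $x_{ab}\in\CC\mathds{1}_n$ for $a\neq b$ and $x_{aa}-x_{bb}\in\CC\mathds{1}_n$ for all $a,b$; hence $h=T\otimes\mathds{1}_n+\mathds{1}_k\otimes P$ for self-adjoint $T\in\matr_k(\CC)$ and $P\in\matr_n(\CC)$. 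Exponentiating the two commuting summands gives $\rho=e^{T}\otimes e^{P}$, and normalizing by $\tr(e^{T})$ while matching the reduced density matrix $\sigma$ yields $\rho=\tau\otimes\sigma$ with $\tau:=e^{T}/\tr(e^{T})$ an invertible density matrix. As an alternative to the differentiation step that adapts to the non-faithful settings of later sections, one can instead argue analytically, continuing $t\mapsto\Ad(\rho^{it})$ to complex parameter and applying a Carlson-type uniqueness theorem; I would keep the elementary Lie-theoretic version here since it is self-contained.
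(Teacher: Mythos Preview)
Your argument is correct. The equivalence (\ref{item:tripledisint})$\Leftrightarrow$(\ref{item:spcondexp}) and the implication (\ref{item:producttensor})$\Rightarrow$(\ref{item:invarmodgrp}) match the paper exactly. The remaining two steps differ in method but not in validity.

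For (\ref{item:spcondexp})$\Leftrightarrow$(\ref{item:producttensor}) you invoke Lemma~\ref{lem:Epartialtrace} directly and compare $\rho$ with $\tau\otimes\sigma$ via the trace pairing; the paper instead cites~\cite[Theorem~4.3]{PaRu19} for (\ref{item:tripledisint})$\Leftrightarrow$(\ref{item:producttensor}). Your route is more self-contained and is in fact precisely the argument the paper gives later for the non-faithful version (Proposition~\ref{prop:disintegtypeIsubfNF}). For (\ref{item:invarmodgrp})$\Rightarrow$(\ref{item:producttensor}) the paper extends $t\mapsto\Ad(\rho^{it})$ analytically to $z\in\CC$ via the identity theorem, evaluates at $z=1$ to obtain $\rho(\mathds{1}_k\otimes A)\rho^{-1}\in\N$, and then runs a block-by-block analysis of $\rho=\sum_{i,j}E_{ij}\otimes\rho_{ij}$ to force $\rho_{ij}=\lambda_{ij}\rho_{11}$. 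Your differentiation at $t=0$ giving $[\log\rho,\N]\subseteq\N$, followed by the decomposition $\log\rho=T\otimes\mathds{1}_n+\mathds{1}_k\otimes P$, is a cleaner Lie-theoretic substitute that exploits finite-dimensionality more transparently. The tradeoff, which you correctly anticipate, is that the paper's analytic-continuation step is deliberately chosen to rehearse the complex-analysis machinery (identity theorem, and later Carlson's theorem) that drives Proposition~\ref{prop:ACbayesconditionsfactorcase}; your version is more elementary here but does not set up that later argument.
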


\begin{proof}
Throughout this proof, set $\A:=\matr_{kn}(\CC)$ and $\B:=\matr_{n}(\CC)$. 

\vspace{3mm}
\noindent
$(\ref{item:tripledisint}\Rightarrow\ref{item:spcondexp})$
Let $G:\B\stoch\A$ be a disintegration of $F$. Then $G\circ F=\id_{\B}$ by faithfulness of $\omega$. Hence, $E:=F\circ G$ is a UCP map such that $E^2=E$ and is therefore a conditional expectation onto the $C^*$-subalgebra $F(\matr_{n}(\CC))$ by Tomiyama's theorem (Theorem~\ref{thm:tomiyama}).
The state-preserving condition follows from $\omega\circ E=\omega\circ F\circ G=\xi\circ G=\omega$ because $G$ is a disintegration. 

\vspace{3mm}
\noindent
$(\ref{item:tripledisint}\Leftarrow\ref{item:spcondexp})$ Given such a conditional expectation $E$, set 
\[
G:=\left(
\A\xstoch{{}_{\mathds{1}_{k}\otimes\B\corestriction}E}\mathds{1}_{k}\otimes\B\xrightarrow{({}_{\mathds{1}_{k}\otimes\B\corestriction}F)^{-1}}\B
\right).
\]
Then it immediately follows that $G$ is a disintegration of $(F,\omega)$. 

\vspace{3mm}
\noindent
$(\ref{item:tripledisint}\Leftrightarrow\ref{item:producttensor})$
This follows from \cite[Theorem~4.3]{PaRu19} and the fact that $\rho$ is invertible implies $\tau$ is invertible. 

\vspace{3mm}
\noindent
$(\ref{item:producttensor}\Rightarrow\ref{item:invarmodgrp})$
Suppose there exists a (necessarily invertible) density matrix $\tau\in \matr_{k}(\CC)$ such that $\rho=\tau\otimes\sigma$. 
For each $z\in\CC$, let $f_{z}:(0,\infty)\to\CC$ be the function sending $x$ to $x^{z}=e^{z\log x}$. Note that $f_{z}$ is multiplicative in the sense that $f_{z}(xy)=f_{z}(x)f_{z}(y)$ for all $x,y\in(0,\infty)$. 
Hence, by the functional calculus and the multiplicativity of $f_{z}$, we obtain $f_{z}(\rho)=f_{z}(\tau)\otimes f_{z}(\sigma)$, i.e., $\rho^{z}=\tau^{z}\otimes\sigma^{z}$, for all $z\in\CC$.
Therefore, by Lemma~\ref{lem:modulargroupinvertibledensitymatrix},
\[
\mg^{t}_{(\matr_{kn}(\CC),\omega)}(\mathds{1}_{k}\otimes A)
=\mathrm{Ad}_{\rho^{it}}(\mathds{1}_{k}\otimes A)
=(\tau^{it}\otimes\sigma^{it})(\mathds{1}_{k}\otimes A)(\tau^{-it}\otimes\sigma^{-it})
=\mathds{1}_{k}\otimes \mathrm{Ad}_{\sigma^{it}}(A)
\]
for every $A\in \matr_{n}(\CC)$,
which proves the implication $(\ref{item:producttensor}\Rightarrow\ref{item:invarmodgrp})$. 

\vspace{3mm}
\noindent
$(\ref{item:invarmodgrp}\Rightarrow\ref{item:producttensor})$
Suppose the modular group $\RR\ni t\mapsto \mg^{t}_{(\matr_{kn}(\CC),\omega)}=\mathrm{Ad}(\rho^{it})$ leaves the subalgebra $F(\matr_{n}(\CC))$ invariant. Since $\rho^{it}=e^{it\log(\rho)}$  and $\rho$ is a strictly positive matrix, the functional calculus guarantees $\rho^{z}$ exists for all $z\in\CC.$ By assumption, 
\begin{equation}
\label{eq:modularinvariancematrix}
\Big[\rho^{it}\big(\mathds{1}_{k}\otimes \matr_{n}(\CC)\big)\rho^{-it},\matr_{n}(\CC)\otimes\mathds{1}_{k}\Big]=0\qquad\forall\;t\in\RR.
\end{equation}
To see that this identity extends to all complex $t$ as well, let $v,w\in\CC^{kn}$, $A\in \matr_{n}(\CC)$, and $A'\in \matr_{k}(\CC)$ and define 
\[
\CC\ni z\mapsto f(z):=\Big\langle  v,\big[\rho^{z}(\mathds{1}_{k}\otimes A)\rho^{-z},A'\otimes\mathds{1}_{n}\big]w\Big\rangle.
\]
Then $f$ is holomorphic for all $z\in\CC$. By assumption~(\ref{eq:modularinvariancematrix}), $f$ equals zero on the imaginary axis and therefore is identically zero by the identity theorem (Theorem~\ref{thm:identitythm}). Since this holds for all $v,w\in\CC^{kn}$, $A\in \matr_{n}(\CC)$, and $A'\in \matr_{k}(\CC)$, this proves 
\[
\rho^{z}\big(\mathds{1}_{k}\otimes \matr_{n}(\CC)\big)\rho^{-z}\subseteq\mathds{1}_{k}\otimes \matr_{n}(\CC)\qquad\forall\;z\in\CC.
\]

Setting $z=1$, for each $A\in \matr_{n}(\CC),$ there exists a $B\in \matr_{n}(\CC)$ such that
\[
\rho(\mathds{1}_{k}\otimes A)=(\mathds{1}_{k}\otimes B)\rho.
\]
If we write $\rho$ as $\rho=\sum_{i,j}E_{ij}\otimes\rho_{ij}$, where $E_{ij}$ are the matrix units in $\matr_{k}(\CC),$ then this condition is equivalent to
\[
\rho_{ij}A=B\rho_{ij}\qquad\forall\;i,j.
\]
Since $\rho$ is \emph{strictly} positive, each of the blocks $\rho_{jj}$ are strictly positive because $\langle v,\rho v\rangle>0$ for all non-zero vectors $v\in\CC^{kn}.$ Hence, $\rho_{jj}$ is invertible for all $j$. In particular, we obtain $\rho_{11}A\rho_{11}^{-1}=B$. Upon plugging this into the arbitrary $ij$ equations we obtain 
\[
\rho_{ij}A=B\rho_{ij}
=\rho_{11}A\rho_{11}^{-1}\rho_{ij}
\iff
\rho_{11}^{-1}\rho_{ij}A=A\rho_{11}^{-1}\rho_{ij}
\qquad\forall\;A\in M_{n}(\CC).
\]
In other words, $\rho_{11}^{-1}\rho_{ij}$ is in the commutant of $\matr_{n}(\CC),$ which is just $\CC\mathds{1}_{n}.$ Thus, there exists a $\lambda_{ij}\in\CC$ such that $\rho_{11}^{-1}\rho_{ij}=\lambda_{ij}\mathds{1}_{n},$ i.e., 
\[
\rho_{ij}=\lambda_{ij}\rho_{11}. 
\]
Therefore, $\rho$ can be expressed as the tensor product 
\[
\rho=\left(\tr(\rho_{11})\sum_{i,j}\lambda_{ij}E_{ij}\right)\otimes\left(\frac{\rho_{11}}{\tr(\rho_{11})}\right)=:\tau'\otimes\sigma'
\]
of two density matrices. The fact that the left factor $\tau'$ is positive is simply because $\rho$ and $\rho_{11}$ are positive while the fact that it is a density matrix follows from the computation
\[
1=\tr(\rho)
=\sum_{j}\tr(\rho_{jj})
=\sum_{j}\tr(\lambda_{jj}\rho_{11})
=\sum_{j}\lambda_{jj}\tr(\rho_{11})
=\tr\left(\tr(\rho_{11})\sum_{i,j}\lambda_{ij}E_{ij}\right).
\]
It immediately follows from this that $\sigma'=\sigma$ by the condition $\omega\circ F=\xi$, which holds if and only if $\tr_{\matr_{k}(\CC)}(\rho)=\sigma$.
\end{proof}

\begin{remark}
The equivalence $(\ref{item:tripledisint}\Leftrightarrow\ref{item:spcondexp})$, under the present faithfulness assumption on $\omega$, holds more generally and with the same proof for unital injective $*$-homomorphisms $F$ between von~Neumann algebras, \cf \cite[Lemma 7.2]{Lon89}. Under the faithfulness assumption on $\omega$, the equivalence $(\ref{item:spcondexp} \Leftrightarrow \ref{item:invarmodgrp})$ is Takesaki's theorem \cite{Tak72}, which we reproved above for completeness (passing through $\ref{item:producttensor}$) in the finite-dimensional matrix algebra context. We shall further discuss and generalize it in Section \ref{sec:cebimgnfs}.
\end{remark}

A generalization of Proposition~\ref{prop:disintegtypeIsubf} to the case of \emph{non-faithful states} (e.g., pure states on matrix algebras) is not entirely trivial. This is mainly because items~$\ref{item:producttensor}$ and~$\ref{item:invarmodgrp}$ are problematic when $\omega$ is non-faithful---one cannot simply work with the truncated modular group, see Remark~\ref{rmk:droppingfaithfulness} below. Nevertheless, items $\ref{item:tripledisint}$ and $\ref{item:spcondexp}$ above are still equivalent, and a modified version of item~$\ref{item:producttensor}$ holds, as the following proposition shows. 

\begin{proposition}\label{prop:disintegtypeIsubfNF}
Given the same assumptions as in Proposition~\ref{prop:disintegtypeIsubf} with the exception that the state $\omega = \tr(\rho \;\cdot\;)$ need not be faithful, the following conditions are equivalent. 
\begin{enumerate}[i.]
\item
\label{item:tripledisintNF}
The pair $(F,\omega)$ admits a disintegration.
\item
\label{item:spcondexpNF}
There exists an $\omega$-preserving conditional expectation $E:\matr_{kn}(\CC)\stoch \matr_{kn}(\CC)$ onto the subalgebra $\mathds{1}_{k}\otimes \matr_{n}(\CC)$. 
\item
\label{item:producttensorNF}
There exists a density matrix $\tau\in \matr_{k}(\CC)$ such that $\rho=\tau\otimes\sigma$.
\end{enumerate}
\end{proposition}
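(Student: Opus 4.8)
The plan is to prove the three equivalences while deliberately \emph{avoiding} the modular group: for non-faithful $\omega$ the modular automorphism group degenerates to a semigroup and the truncated version is not usable (\cf Remark~\ref{rmk:droppingfaithfulness}), which is precisely why the analogue of item~\ref{item:invarmodgrp} of Proposition~\ref{prop:disintegtypeIsubf} does not appear here. Instead I would route everything through item~\ref{item:producttensorNF}, combining the disintegration characterization of \cite{PaRu19} with the partial-trace representation of conditional expectations from Lemma~\ref{lem:Epartialtrace}. The only recurring subtlety is to check that neither ingredient secretly assumes invertibility of $\tau$.

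For $(\ref{item:tripledisintNF}\Leftrightarrow\ref{item:producttensorNF})$ I would invoke \cite[Theorem~4.3]{PaRu19}, which characterizes the existence of a disintegration of $(F,\omega)$ by the condition $\rho=\tau\otimes\sigma$ for some density matrix $\tau\in\matr_k(\CC)$. The sole difference from the faithful situation is that one can no longer conclude $\tau$ is invertible (there this came from $\rho$ invertible $\Rightarrow\tau$ invertible); accordingly item~\ref{item:producttensorNF} is stated with a possibly non-invertible $\tau$, and no further upgrade is required.

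For $(\ref{item:producttensorNF}\Rightarrow\ref{item:spcondexpNF})$, given $\rho=\tau\otimes\sigma$ I would exhibit the explicit candidate $E:=\tr(\tau\;\cdot\;)\otimes\id_n$, namely the $\tau$-weighted partial trace. It is a unital positive idempotent onto $\mathds{1}_k\otimes\matr_n(\CC)$, hence a conditional expectation by Tomiyama's theorem (Theorem~\ref{thm:tomiyama}), and invertibility of $\tau$ is never used. Its $\omega$-preservation is a one-line check on simple tensors $A'\otimes A''$: using $\tr(\tau)=1$ and $\rho=\tau\otimes\sigma$, both $\omega(E(A'\otimes A''))$ and $\omega(A'\otimes A'')$ equal $\tr(\tau A')\tr(\sigma A'')$. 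Conversely, for $(\ref{item:spcondexpNF}\Rightarrow\ref{item:producttensorNF})$, an $\omega$-preserving conditional expectation $E$ onto $\mathds{1}_k\otimes\matr_n(\CC)$ is represented, via Lemma~\ref{lem:Epartialtrace} (which explicitly allows non-faithful, hence possibly non-invertible, data), as $E=\tr(\tau\;\cdot\;)\otimes\id_n$ for some density matrix $\tau$. The pullback hypothesis gives $\sigma=\tr_{\matr_k(\CC)}(\rho)$, and then $\omega$-preservation $\tr(\rho\,E(A))=\tr(\rho A)$ evaluated on simple tensors yields $\tr(\rho(A'\otimes A''))=\tr((\tau\otimes\sigma)(A'\otimes A''))$ for all $A',A''$, whence $\rho=\tau\otimes\sigma$ since simple tensors span.

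The main obstacle is conceptual rather than computational: the faithful bridge $(\ref{item:producttensor}\Leftrightarrow\ref{item:invarmodgrp})$ through the modular group is genuinely unavailable, and one must resist reducing to the support algebras and running Takesaki's theorem there, a route the introduction flags as delicate for conditional expectations (and outright false for Bayesian inverses). The safe path is the purely linear-algebraic one above, with the \cite{PaRu19} characterization handling the disintegration leg and Lemma~\ref{lem:Epartialtrace} the conditional-expectation leg. For completeness I would also record the direct construction $G:=({}_{\mathds{1}_k\otimes\B\corestriction}F)^{-1}\circ{}_{\mathds{1}_k\otimes\B\corestriction}E$ witnessing $(\ref{item:spcondexpNF}\Rightarrow\ref{item:tripledisintNF})$, which is verbatim the faithful argument and even yields $G\circ F=\id_\B$ exactly (an optimal hypothesis), providing an independent closure of the loop $\ref{item:tripledisintNF}\Leftrightarrow\ref{item:producttensorNF}\Leftrightarrow\ref{item:spcondexpNF}$.
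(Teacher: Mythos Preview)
Your proposal is correct and follows essentially the same approach as the paper: both use \cite[Theorem~4.3]{PaRu19} for the disintegration leg and Lemma~\ref{lem:Epartialtrace} (the partial-trace representation) for the conditional-expectation leg, with the $\omega$-preservation check on simple tensors yielding $\rho=\tau\otimes\sigma$. The only cosmetic difference is routing: the paper proves $\ref{item:tripledisintNF}\Rightarrow\ref{item:spcondexpNF}$ directly by first upgrading $G\circ F\aeequals{\xi}\id$ to exact equality (again via \cite[Theorem~4.3]{PaRu19} and factoriality of $\matr_n(\CC)$), whereas you route through $\ref{item:producttensorNF}$ as a hub; both close the loop with the same ingredients.
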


\begin{proof}
It is easy to see that item~$\ref{item:spcondexpNF}$ implies~$\ref{item:tripledisintNF}$. As for the converse, if $G$ is a disintegration, so that it satisfies $G\circ F\aeequals{\xi}\mathrm{id}_{\matr_{n}(\CC)}$, it follows that $G$ actually satisfies $G\circ F=\mathrm{id}_{\matr_{n}(\CC)}$ by~\cite[Theorem~4.3]{PaRu19} (which itself relies on a result on a.e.\ equivalence~\cite[Theorem~2.48]{PaRu19} and on the factoriality of $\matr_n(\CC)$), so that $\ref{item:tripledisintNF}$ implies~$\ref{item:spcondexpNF}$ as well.

The implication $(\ref{item:producttensor}\Rightarrow \ref{item:spcondexp})$ is also easy to see, while the direct implication $(\ref{item:spcondexp}\Rightarrow\ref{item:producttensor})$, with $\tau$ a (not necessarily invertible) density matrix, follows from Lemma~\ref{lem:Epartialtrace}. In more detail, for finite type $I$ subfactors, every (normal)  conditional expectation  $E:\matr_m(\CC) \cong \matr_{k}(\CC) \otimes \matr_{n}(\CC) \stoch F(\matr_n(\CC)) = \oneop_k\otimes \matr_n(\CC)$, with $m=kn$, by Lemma \ref{lem:Epartialtrace} is represented as
a partial trace with respect to a density matrix $\tau \in \matr_{k}(\CC$).
Thus, the condition of $E$ being $\omega$-preserving, i.e., $\omega = \omega \circ E$, reads
\begin{align*}
\tr(\rho (A\otimes B)) &= \tr(\rho (\oneop_k \otimes \tr(\tau A)B))\\
&= \tr(\tau A) \tr (\rho (\oneop_k\otimes B))\\
&= \tr(\tau A) \tr(\sigma B).
\end{align*}
Since the tensor product of the traces on $\matr_k(\CC)$ and $\matr_n(\CC)$ is the trace on $\matr_m(\CC)$, we get $\tr ((\rho - \tau \otimes \sigma) (A\otimes B)) = 0$ for all $A,B$. Therefore, by the faithfulness of the trace, we conclude $\rho = \tau \otimes \sigma$. 
\end{proof}

\begin{remark}
One can also directly show the equivalence $(\ref{item:tripledisintNF}\Leftrightarrow\ref{item:producttensorNF})$ by different arguments \cf~\cite[Theorem~5.1]{PaRu19}.
\end{remark}

\begin{remark}
\label{rmk:droppingfaithfulness}
The least trivial condition to generalize to the non-faithful setting is item~$\ref{item:invarmodgrp}$ in Proposition~\ref{prop:disintegtypeIsubf}. 
One naive replacement for the non-faithful setting would be to work with the Hilbert space $\mathcal{H}_{\omega}:=P_{\omega}\CC^{kn}$ and the $C^*$-algebra $P_{\omega}\matr_{kn}(\CC)P_{\omega},$ where $P_{\omega}$ is the support projection of $\omega$. On this subalgebra, $\rho$ defines an invertible element, denoted $\rho_{\upharpoonright}$, and $\omega_{\upharpoonright}:=\tr(\rho_{\upharpoonright}\;\cdot\;)$ is faithful with associated modular group $\mg^{t}_{(P_{\omega}\matr_{kn}(\CC)P_{\omega},\omega_{\upharpoonright})}$. However, invariance under the modular group alone does \emph{not} guarantee the existence of a state-preserving conditional expectation. More on this will be discussed in Section~\ref{sec:cebimgnfs}, where we extend Takesaki's theorem \cite{Tak72} and we find the extra condition needed to guarantee the existence of a state-preserving conditional expectation. 
\end{remark}

\subsection{Disintegrations on multi-matrix algebras}
\label{sec:disintstateprescondexp}

After being exposed to the simpler matrix algebra case, we extend Proposition~\ref{prop:disintegtypeIsubf} and Proposition~\ref{prop:disintegtypeIsubfNF} to the finite-dimensional $C^*$-algebra case in this section. In particular, we show the equivalence between disintegrations and state-preserving conditional expectations on finite-dimensional $C^*$-algebras with not necessarily faithful states.
The next general lemma will be used throughout.

\begin{lemma}
\label{lem:monotonicsupportUCP}
Let $F:(\B,\xi)\stoch(\A,\omega)$ be a state-preserving UCP map between quantum probability spaces. Then the nullspaces satisfy $F(N_{\xi})\subseteq N_{\omega}$. 
Furthermore, if $\A$ and $\B$ are finite-dimensional (or more generally, $W^*$-algebras), then $F(P_{\xi}^{\perp})\le P_{\omega}^{\perp}$ and $F(P_{\xi})\ge P_{\omega}$. In particular, $P_{\omega}F(P_{\xi})P_{\omega}=P_{\omega}$. 
\end{lemma}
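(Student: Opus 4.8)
The plan is to prove the three assertions in order, each feeding into the next. For the first, the nullspace inclusion $F(N_\xi)\subseteq N_\omega$, I would lean on the fact that a UCP map is in particular $2$-positive and unital, hence satisfies the Kadison--Schwarz inequality $F(B)^*F(B)\le F(B^*B)$ for all $B\in\B$ (see \cite{Pau02}). Then for $B\in N_\xi$, applying the positive functional $\omega$ together with the state-preserving hypothesis $\omega\circ F=\xi$ gives
\[
0\le\omega\big(F(B)^*F(B)\big)\le\omega\big(F(B^*B)\big)=\xi(B^*B)=0,
\]
so $\omega(F(B)^*F(B))=0$, i.e. $F(B)\in N_\omega$.

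For the $W^*$-part I would first record the standard description of the nullspace in terms of the support projection: for a normal state on a $W^*$-algebra one has $A\in N_\omega\iff AP_\omega=0$, equivalently $N_\omega=\A P_\omega^\perp$, and similarly for $\xi$. The projection $P_\xi^\perp$ satisfies $\xi\big((P_\xi^\perp)^*P_\xi^\perp\big)=\xi(P_\xi^\perp)=0$, so $P_\xi^\perp\in N_\xi$, and the first part yields $Q:=F(P_\xi^\perp)\in N_\omega$, that is $QP_\omega=0$. Since $Q$ is positive, hence self-adjoint, we also get $P_\omega Q=0$, so $Q=P_\omega^\perp Q P_\omega^\perp$. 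Unitality and positivity of $F$ give the contraction bound $Q\le F(\oneop)=\oneop$, and then the compression
\[
P_\omega^\perp-Q=P_\omega^\perp(\oneop-Q)P_\omega^\perp\ge 0
\]
delivers $F(P_\xi^\perp)\le P_\omega^\perp$.

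The inequality $F(P_\xi)\ge P_\omega$ is then immediate from unitality: $F(P_\xi)=\oneop-F(P_\xi^\perp)\ge\oneop-P_\omega^\perp=P_\omega$. For the final corner identity I would sandwich $P_\omega\le F(P_\xi)\le\oneop$ (the upper bound again from unitality and positivity) by $P_\omega$ on both sides; using $P_\omega P_\omega P_\omega=P_\omega$ this gives $P_\omega\le P_\omega F(P_\xi)P_\omega\le P_\omega$, hence $P_\omega F(P_\xi)P_\omega=P_\omega$.

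Conceptually the argument is routine; the two points requiring care are that the first step genuinely uses the Schwarz inequality (mere positivity of $F$ would not suffice to compare $F(B)^*F(B)$ with $F(B^*B)$), and that the operator estimate $F(P_\xi^\perp)\le P_\omega^\perp$ together with the corner identity rely on the existence and ideal-theoretic description of the support projection, which is exactly why the finite-dimensional (or $W^*$) hypothesis is imposed. The main thing to get right is therefore the passage from the membership $F(P_\xi^\perp)\in N_\omega$ to the operator inequality, which I would handle by the compression identity above rather than by a spectral argument on the ranges of $P_\omega$ and $P_\omega^\perp$.
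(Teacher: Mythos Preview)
Your proposal is correct and follows essentially the same route as the paper's proof: Kadison--Schwarz for the nullspace inclusion, then $F(P_\xi^\perp)\in N_\omega=\A P_\omega^\perp$ combined with self-adjointness to land in the corner $P_\omega^\perp\A P_\omega^\perp$, then the bound $F(P_\xi^\perp)\le\oneop$ compressed by $P_\omega^\perp$, and finally the sandwich by $P_\omega$ for the last identity. Your compression identity $P_\omega^\perp-Q=P_\omega^\perp(\oneop-Q)P_\omega^\perp\ge0$ is the explicit form of what the paper phrases as ``the largest element in $P_\omega^\perp\A P_\omega^\perp$ bounded above by $\oneop_\A$ is $P_\omega^\perp$.''
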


\begin{proof} 
To see $F(N_{\xi})\subseteq N_{\omega}$, let $B\in\B$ satisfy $\xi(B^*B)=0$. Then 
\[
0\le\omega\big(F(B)^*F(B)\big)\le\omega\big(F(B^*B)\big)=\xi(B^*B)=0,
\]
where the Kadison--Schwarz inequality for $F$ was used in the second inequality. 
From this, it immediately follows that $F(P_{\xi}^{\perp})\in N_{\omega}=\A P_{\omega}^{\perp}$. Since $F$ is $*$-preserving (because it is positive), $F(P_{\xi}^{\perp})$ is self-adjoint and therefore $F(P_{\xi}^{\perp})\in P_{\omega}^{\perp}\A P_{\omega}^{\perp}$. Furthermore, since $F$ is order-preserving and unital, $F(P_{\xi}^{\perp})\le 1_{\A}$. But the largest element in $P_{\omega}^{\perp}\A P_{\omega}^{\perp}$ that is bounded from above by $1_{\A}$ is precisely $P_{\omega}^{\perp}$. Hence, $F(P_{\xi}^{\perp})\le P_{\omega}^{\perp}$. The claim $F(P_{\xi})\ge P_{\omega}$ follows immediately from this and the definition of ${}^{\perp}$. Finally, since $P_{\omega}\le F(P_{\xi})\le 1_{\A}$, applying the CP (and hence order-preserving) map $\Ad_{P_{\omega}}$ to this pair of inequalities gives $P_{\omega}\le P_{\omega}F(P_{\xi})P_{\omega}\le P_{\omega}$, which proves the last claim $P_{\omega}F(P_{\xi})P_{\omega}=P_{\omega}$.
\end{proof}

\begin{notation}
\label{not:dsum}
The following notation will be used throughout this section. Set $\mathcal{A}=\bigoplus_{i=1}^{s}\matr_{m_{i}}(\CC)$,  $\mathcal{B}=\bigoplus_{j=1}^{t}\matr_{n_{j}}(\CC)$.  
A $*$-homomorphism $F:\mathcal{B}\rightarrow\mathcal{A}$ is determined by its multiplicities $\{c_{ij}\in\NN\cup\{0\}\}$ in the following sense. First, $m_{i}=\sum_{j=1}^{t}c_{ij}n_{j}$ for every $i$. As a result, every element $A_{i}\in\matr_{m_{i}}(\CC)$ can be expressed as a 
$t\times t$ matrix 
\[
A_{i}\equiv
\begin{bmatrix}
A_{i;11}&\cdots&A_{i;1t}\\
\vdots&&\vdots\\
A_{i;t1}&\cdots&A_{i;tt}\\
\end{bmatrix}
,
\]
where the $kl$-th subblock, $A_{i;kl},$ is a $(c_{ik}n_{k})\times(c_{il}n_{l})$ matrix. A block diagonal matrix $A_{i}$, i.e.,  $A_{i;kl}=0$ for all $k\ne l$ will often be denoted by $\mathrm{diag}(A_{i;11},\dots,A_{i;tt})$ or more concisely $\bigboxplus_{j=1}^{t}A_{i;jj}$.
Second, up to unitary conjugation on the codomain, $F$ has the form $F(\bigoplus_{j}B_{j})=\bigoplus_{i}\bigboxplus_{j}(\mathds{1}_{c_{ij}}\otimes B_{j})\equiv\bigoplus_{i}\mathrm{diag}(\mathds{1}_{c_{i1}}\otimes B_{1},\dots,\mathds{1}_{c_{it}}\otimes B_{t})$, which can also be expressed as $F(\bigoplus_{j}B_{j})=\bigoplus_{i}\bigboxplus_{j}F_{ij}(B_{j})$, where $F_{ij}:={}_{\matr_{m_{i}}(\CC)\corestriction}F_{\restriction \matr_{n_{j}}(\CC)}$ (cf.\ Notation~\ref{not:rescor}).
For convenience, set $X:=\{1,\dots,s\}$ and $Y:=\{1,\dots,t\}$. A state $\omega$ on $\A$ will often be decomposed as $\omega=\sum_{x\in X}p_{x}\tr(\rho_{x}\;\cdot\;)$, with $\rho_{x}\in\matr_{m_{x}}(\CC)$ a density matrix and $p$ a probability measure on $X$. Similarly, write $\xi=\sum_{y\in Y}q_{y}\tr(\sigma_{y}\;\cdot\;)$ for a state on $\B$. 
\end{notation}

\begin{theorem}\label{thm:condexpdisint}
Let $F:\mathcal{B}\rightarrow\mathcal{A}$ be a (unital) $*$-homomorphism of finite-dimensional $C^*$-algebras, let $\omega$ be a not necessarily faithful state on $\mathcal{A}$ and let $\xi:=\omega\circ F$ be the corresponding state on $\mathcal{B}$. Set $\mathcal{N}:=F(\mathcal{B})$. 
Then, the following conditions are equivalent. 
\begin{enumerate}[i.]
\item
\label{item:tripledisintdsum}
The pair $(F,\omega)$ admits a disintegration.
\item
\label{item:spcondexpdsum}
An $\omega$-preserving conditional expectation $E:\mathcal{A}\stoch\mathcal{A}$ onto $\mathcal{N}$ exists.
\item
\label{item:producttensordsum}
For each $i\in X$ and $j\in Y$, there exist
non-negative matrices $\tau_{ij}\in\matr_{c_{ij}}(\CC)$ such that
\[
\tr\left(\sum_{i=1}^{s}\tau_{ij}\right)=1\qquad\forall\;j\in Y\setminus{N}_{q},
\]
where $N_{q}$ denotes the nullspace of the probability measure $q$ associated with $\xi$, 
and
\[
p_{i}\rho_{i}=\bigboxplus_{j=1}^{t}(q_{j}\tau_{ij}\otimes\sigma_{j})\equiv
\mathrm{diag}(q_{1}\tau_{i1}\otimes\sigma_{1},\dots,q_{t}\tau_{it}\otimes\sigma_{t})
\qquad\forall\;i\in X.
\]
\end{enumerate}
If, in addition, $\omega$ is faithful, then these conditions are equivalent to 
\begin{enumerate}[i.]
\setcounter{enumi}{3}
\item
\label{item:invarmodgrpdsum}
The modular group $\mg^t_{(\A,\omega)}$ leaves the subalgebra $\N\subseteq\A$ invariant for every $t\in\RR$.
\end{enumerate}
\end{theorem}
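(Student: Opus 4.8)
The plan is to prove the equivalences via the scheme $\ref{item:spcondexpdsum}\Leftrightarrow\ref{item:producttensordsum}$ (the structural heart), $\ref{item:spcondexpdsum}\Rightarrow\ref{item:tripledisintdsum}$, and the converse $\ref{item:tripledisintdsum}\Rightarrow\ref{item:producttensordsum}$, closing the loop, and then to handle $\ref{item:invarmodgrpdsum}$ separately under the faithfulness hypothesis. Throughout I would fix the block decomposition of Notation~\ref{not:dsum}, writing each $A_i\in\matr_{m_i}(\CC)$ in terms of its $(c_{ik}n_k)\times(c_{il}n_l)$ subblocks $A_{i;kl}$, so that $\N=F(\B)$ consists of the block-diagonal elements $\bigoplus_i\bigboxplus_j(\oneop_{c_{ij}}\otimes B_j)$. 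A preliminary observation I would record first is that $\xi=\omega\circ F$ forces $q_j\sigma_j=\sum_i p_i\,\tr_{c_{ij}}(\rho_{i;jj})$ (partial trace over the multiplicity factor), and in particular that any block $j$ absent from the image of $F$ (i.e.\ with $c_{ij}=0$ for all $i$) satisfies $q_j=\xi(z_j)=\omega(F(z_j))=0$, so that $j\in N_q$. This is exactly what reconciles the restricted normalization in~$\ref{item:producttensordsum}$ with the exact projection property of a conditional expectation.

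For $\ref{item:spcondexpdsum}\Leftrightarrow\ref{item:producttensordsum}$ I would use $\N$-bimodularity together with the central projections $z_j\in\B$: since $F(z_j)\,\N\,F(z_{j'})=0$ for $j\neq j'$, any conditional expectation $E$ onto $\N$ annihilates off-diagonal subblocks and restricts on each diagonal block $A_{i;jj}\in\matr_{c_{ij}}(\CC)\otimes\matr_{n_j}(\CC)$ to a normal conditional expectation onto $\oneop_{c_{ij}}\otimes\matr_{n_j}(\CC)$. By Lemma~\ref{lem:Epartialtrace} each such restriction is a partial trace against a non-negative $\tau_{ij}\in\matr_{c_{ij}}(\CC)$, so that the $B_j$-component of $E(A)$ is $\sum_i\tr_{c_{ij}}\big((\tau_{ij}\otimes\oneop_{n_j})A_{i;jj}\big)$. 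The projection property then reads $\sum_i\tr(\tau_{ij})=1$ on every present block, while a direct computation shows $\omega\circ E=\omega$ is equivalent to $\rho_i$ being block-diagonal with $p_i\rho_{i;jj}=q_j\,\tau_{ij}\otimes\sigma_j$, which is the displayed identity in~$\ref{item:producttensordsum}$. For the converse, given the $\tau_{ij}$ of~$\ref{item:producttensordsum}$ I would first normalize them freely on the blocks $j\in N_q$ (allowed, since there $q_j=0$ makes the state-preserving identity vacuous) so that $\sum_i\tr(\tau_{ij})=1$ holds on all present blocks, and then assemble $E$ from the corresponding partial traces; bimodularity and the computation above show it is an $\omega$-preserving conditional expectation onto $\N$.

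The implication $\ref{item:spcondexpdsum}\Rightarrow\ref{item:tripledisintdsum}$ is then straightforward: writing $\ker F=\bigoplus_{j\in J_0}\matr_{n_j}(\CC)$ for the (automatically $N_q$-supported) absent blocks, the restriction of $F$ to the complementary summand $\B'=\bigoplus_{j\notin J_0}\matr_{n_j}(\CC)$ is a $*$-isomorphism onto $\N$, and with $H:\N\to\B$ its inverse composed with $\B'\hookrightarrow\B$ the map $G:=H\circ E$ is a UCP disintegration, since $\xi\circ G=\omega$ (using $\xi\circ H=\omega\restriction_\N$ and $\omega\circ E=\omega$) and $G\circ F(B)=B-B''$ with $B''\in\ker F\subseteq N_\xi$ gives $G\circ F\aeequals{\xi}\id_\B$. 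The genuinely delicate part --- and the one I expect to be the main obstacle --- is the converse $\ref{item:tripledisintdsum}\Rightarrow\ref{item:producttensordsum}$: when $F$ is non-injective and $\omega$ non-faithful the obvious candidate $E:=F\circ G$ is only almost-everywhere idempotent, since $G\circ F\aeequals{\xi}\id_\B$ and Lemma~\ref{lem:monotonicsupportUCP} only yield $E^2\aeequals{\omega}E$ and $E\restriction_\N\aeequals{\omega}\id_\N$, which falls short of the exact projection property. I would circumvent this by restricting to the support algebras of $\omega$ and $\xi$, where the states become faithful and $G$ restricts to an honest disintegration; the faithful matrix-algebra analysis (Proposition~\ref{prop:disintegtypeIsubf}) then yields the block-diagonal tensor decomposition $p_i\rho_{i;jj}=q_j\,\tau_{ij}\otimes\sigma_j$ of~$\ref{item:producttensordsum}$, the blocks indexed by $N_q$ remaining unconstrained. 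Alternatively the direct disintegration criterion of~\cite{PaRu19} gives~$\ref{item:producttensordsum}$ at once; the delicate bookkeeping lies in matching the exact and a.e.\ conditions across the support algebras of $\omega$ and $\xi$.

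Finally, under faithfulness ($N_q=\emptyset$, every $\rho_i$ invertible) I would establish $\ref{item:producttensordsum}\Leftrightarrow\ref{item:invarmodgrpdsum}$ as a blockwise version of Takesaki's theorem~\cite{Tak72}. Here $\mg^t_{(\A,\omega)}=\bigoplus_i\Ad(\rho_i^{it})$; invariance of $\N$ first forces each $\rho_i^{it}$ to commute with the central block projections of $\N$, hence (by analytic continuation) $\rho_i$ to be block-diagonal, and on each present block it reduces exactly to the matrix-algebra equivalence $\ref{item:producttensor}\Leftrightarrow\ref{item:invarmodgrp}$ of Proposition~\ref{prop:disintegtypeIsubf}, with the passage from the imaginary axis to all complex exponents again using the identity theorem (Theorem~\ref{thm:identitythm}). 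The one global constraint is that the factors $\sigma_j$ obtained from the various summands $i$ must coincide, which is guaranteed by the relation $q_j\sigma_j=\sum_i p_i\,\tr_{c_{ij}}(\rho_{i;jj})$ recorded at the outset.
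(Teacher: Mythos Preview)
Your proposal is correct and close in spirit to the paper, but the organization differs in an interesting way. The paper's main proof cites~\cite[Theorem~5.108]{PaRu19} for $\ref{item:tripledisintdsum}\Leftrightarrow\ref{item:producttensordsum}$, extracts from it the \emph{explicit} disintegration formula
\[
G_{ji}(A_{i})=\tr_{\matr_{c_{ij}}(\CC)}\big((\tau_{ij}\otimes\oneop_{n_{j}})A_{i;jj}\big)\quad(j\notin N_{F}),
\]
and then uses this particular $G$ to show $F\circ G$ is a genuine conditional expectation, thereby getting $\ref{item:tripledisintdsum}\Rightarrow\ref{item:spcondexpdsum}$. You instead prove $\ref{item:spcondexpdsum}\Leftrightarrow\ref{item:producttensordsum}$ directly via the bimodularity/partial-trace decomposition of $E$; this is precisely the alternative route the paper records in a remark and develops in Appendix~\ref{app:stateprescondexp}. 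One refinement: $E$ restricted to a single block $A_{i;jj}$ does not literally land in $\oneop_{c_{ij}}\otimes\matr_{n_j}(\CC)$ but in $F(z_j)\N$, so the blockwise ``conditional expectations'' come with coefficients $\lambda_{ij}$ (determined by $E(P_iQ_j)=\lambda_{ij}Q_j$) that get absorbed into your $\tau_{ij}$; this is handled carefully in Lemma~\ref{lem:condexpdecomp}.

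Your diagnosis of the obstacle in $\ref{item:tripledisintdsum}\Rightarrow\ref{item:spcondexpdsum}$ is exactly right: for an arbitrary disintegration $G$, the map $F\circ G$ is only $\omega$-a.e.\ idempotent. The paper's fix is not to pass to support algebras but simply to replace $G$ by the explicit formula above (which is $\xi$-a.e.\ equivalent to any disintegration and satisfies $G\circ F=\id_{\B}$ on the nose on the non-kernel summand). Your support-algebra alternative is more delicate than it looks --- showing that $F^{Q}_{R}$ is again a $*$-homomorphism and that disintegrability descends and then lifts is essentially the content of Theorem~\ref{thm:Takesakinonfaithfuldsum}, proved later in the paper --- so the clean route here is indeed the citation to~\cite{PaRu19} that you also mention. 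For $\ref{item:producttensordsum}\Leftrightarrow\ref{item:invarmodgrpdsum}$ under faithfulness, your blockwise reduction to Proposition~\ref{prop:disintegtypeIsubf} via Lemma~\ref{lem:modulargroupinvertiblefdCAlg} is exactly what the paper does.
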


\begin{proof}
Let $N_{F}:=\big\{j\in Y\,:\,c_{ij}=0\;\;\forall\;i\in X\big\}.$
Note that $N_{F}\subseteq N_{q}$, which follows from Lemma~\ref{lem:monotonicsupportUCP} since  $F( N_{\xi})\subseteq N_{\omega}$ and that if $F(B)\in N_{\omega}$, then $0=\omega(F(B)^*F(B))=\omega(F(B^*B))=\xi(B^*B)$, which shows that $B\in N_{\xi}$. In particular, if $F(B)=0$, then $B\in N_{\xi}$. Also note that $F$ restricts to a $*$-isomorphism $F_{\restriction}:\bigoplus_{y\in Y\setminus N_{F}}\matr_{n_{j}}(\CC)\xrightarrow{\cong}\mathcal{N}$.

\vspace{3mm}
\noindent
$(\ref{item:tripledisintdsum}\Leftrightarrow\ref{item:producttensordsum})$ This equivalence was proved in~\cite[Theorem~5.108]{PaRu19}. In the original proof, one sees that it is still possible to choose $\tau_{ij}$ such that $\sum_{i}\tau_{ij}$ is a density matrix for all $j\in Y\setminus N_{F}$. Since $N_{F}\subseteq N_{q}$, this allows one to obtain a disintegration of the form 
\begin{equation}
\label{eq:disintformula}
G_{ji}(A_{i})=
\begin{cases}
\tr_{\matr_{c_{ij}}(\CC)}\big((\tau_{ij}\otimes\mathds{1}_{n_{j}})A_{i;jj}\big)&
\mbox{ if $j\in Y\setminus N_{F}$}\\
\frac{1}{sm_{i}}\tr(A_{i})\mathds{1}_{n_{j}}&
\mbox{ if $j\in N_{F}$ }\\
\end{cases}
\;\;,
\end{equation}
which is $\xi$-a.e.\ equivalent to the formula provided in~\cite{PaRu19}. 

\vspace{3mm}
\noindent
$(\ref{item:tripledisintdsum}\Leftarrow\ref{item:spcondexpdsum})$
Suppose $E$ is an $\omega$-preserving conditional expectation onto $\mathcal{N}$. Then, since every linear map $G:\mathcal{A}\stoch\mathcal{B}$ is determined by the values on different factors, set $G$ to be the map uniquely determined by the two composites
\[
\mathcal{A}\xstoch{E_{\downharpoonleft}}\mathcal{N}\xrightarrow{F_{\restriction}^{-1}}\bigoplus_{j\in Y\setminus N_{F}}\matr_{n_{j}}(\CC)
\]
and
\[
\bigoplus_{j\in N_{F}}\frac{\tr(\;\cdot\;)}{\sqrt{\dim(\mathcal{A})}}\mathds{1}_{n_{j}}:\mathcal{A}\stoch\bigoplus_{j\in N_{F}}\matr_{n_{j}}(\CC).
\]
Then $G$ is a disintegration of $(F,\omega)$. 

\vspace{3mm}
\noindent
$(\ref{item:tripledisintdsum}\Rightarrow\ref{item:spcondexpdsum})$
Suppose $(F,\omega)$ admits a disintegration $G$. Then, Equation~(\ref{eq:disintformula}) provides one such disintegration. Using this formula, one sees that $F\circ G$ defines an $\omega$-preserving conditional expectation onto $\mathcal{N}$. Indeed, 
\[
(F\circ G)(A)=\bigoplus_{i'\in X}\sum_{i\in X}\left(\bigboxplus_{j\in Y\setminus N_{F}}\mathds{1}_{c_{i'j}}\otimes\tr_{\matr_{c_{ij}}(\CC)}\big((\tau_{ij}\otimes\mathds{1}_{n_{j}})A_{i;jj}\big)\right).
\]
Note that the second expression in Equation~(\ref{eq:disintformula}) vanishes because $F(B)=0$ for all $B\in\matr_{n_{j}}(\CC)$ with $j\in N_{F}$. Therefore, the fact that $F\circ G$ fixes $\mathcal{N}$ follows immediately from this calculation upon taking $A$ to be in $\mathcal{N}$, which means it must be of the form $\bigoplus_{i\in X}\bigboxplus_{j\in Y}(\mathds{1}_{c_{ij}}\otimes A_{j})$, with $A_{j}\in\matr_{n_{j}}(\CC)$.

\vspace{3mm}
\noindent
$(\ref{item:producttensordsum}\Leftrightarrow\ref{item:invarmodgrpdsum})$
If $\omega$ is faithful, this equivalence follows from a proof analogous to the one in Proposition~\ref{prop:disintegtypeIsubf} when combined with Lemma~\ref{lem:modulargroupinvertiblefdCAlg}.
\end{proof}

\begin{remark}
One can also directly prove $(\ref{item:spcondexpdsum}\Leftrightarrow\ref{item:producttensordsum})$ in Theorem~\ref{thm:condexpdisint} by the classification of \emph{not necessarily faithful state-preserving} conditional expectations on direct sums of matrix algebras. This is  different from the proof of $(\ref{item:tripledisintdsum}\Leftrightarrow\ref{item:producttensordsum})$ in~\cite[Theorem~5.108]{PaRu19}, which uses Kraus operators and facts about $C^*$-modules.
The proof using conditional expectations provides useful techniques and is given in Appendix \ref{app:stateprescondexp}.
\end{remark}

\begin{remark}
An analogue of the end of  Remark~\ref{rmk:droppingfaithfulness} applies here as well regarding the modular group for non-faithful states. We will come back to this in Section~\ref{sec:Binvmgnonf}. 
\end{remark}

\section{On a theorem of Bayes, Accardi, Cecchini, and Petz}
\label{sec:BACP}

\subsection{Bayesian inverses}

Recent work in categorical probability theory has allowed a potentially powerful and completely diagrammatic formulation of a version of Bayes' theorem involving the idea of a Bayesian inverse~\cite{DDGK16,CDDG17,ChJa18,Fr20,PaBayes,PaRuBayes,PaFu22}. 
We will use interchangeably the notation $\A\xstoch{G}\B$ for $G:\A\stoch\B$, and $\A\xrightarrow{G}\B$ for $G:\A\to\B$ (cf.\ Notation \ref{not:arrowsandsquigglyarrows}).

\begin{definition}
\label{defn:Bayesianinverse}
Let $\mathcal{B}\xstoch{F}\mathcal{A}$ be a UCP map between finite-dimensional $C^*$-algebras, let $\mathcal{A}\xstoch{\omega}\CC$ be a state, and set $\xi:=\omega\circ F$. A \define{Bayesian inverse} of $(F,\omega)$ is a UCP map $\mathcal{A}\xstoch{G}\mathcal{B}$ such that
$\xi\big(G(A)B\big)=\omega\big(AF(B)\big)$
for all $A\in\mathcal{A}$ and $B\in\mathcal{B}$. The notation $\overline{F}$ will also be used to denote a Bayesian inverse of $(F,\omega)$.
\end{definition}

The equation $\xi\big(G(A)B\big)=\omega\big(AF(B)\big)$, in the form presented above, seemed to have first appeared in the work of Accardi and Cecchini in 1982~\cite{AcCe82}, though they did not explicitly mention any connection to Bayes' theorem. Indeed, they were mainly concerned with faithful states $\omega$ and $\xi$, modular theory, and generalizing Takesaki's theorem. In the case of faithful states, they showed that a Bayesian inverse is a generalization of the notion of a state-preserving conditional expectation by extending a theorem of Takesaki~\cite{Tak72}, where Takesaki's theorem was the special case where $F$ is a unital injective $*$-homomorphism (which corresponds to a subalgebra of $\A$). In fact, they introduced a more general notion of conditional expectation (called the \emph{$\varphi$-conditional expectation} in~\cite{AcCe82}), which always exists in the not necessarily commutative setting, even when a state-preserving conditional expectation does not.  

In follow-up work, Accardi and Cecchini~\cite{AcCe83} and Frigerio~\cite{Fr83} continued investigations with this generalized conditional expectation, providing further examples and properties. Accardi and Cecchini proved that the generalized conditional expectation also specializes to the usual notion of classical conditional expectation for commutative algebras (see also~\cite{CaVe20}, where a lucid exposition is given in the finite-dimensional setting). In 1984, Petz generalized this further to allow for UCP maps (not necessarily subalgebra inclusions), and provided many properties of the generalized conditional expectation~\cite{Pe84}, which was eventually called the transpose channel~\cite{OhPeBook}. This map is also known as the Petz recovery map due to all the work by Petz that followed in subsequent decades~\cite{Pe88,Pe03}. The Petz recovery map has taken precedence in the quantum information community, particularly in recent years due to the intimate connection between the existence of recovery maps and saturation of certain measure distances (like relative entropies, $f$-divergences, and data-processing inequalities) between quantum states~\cite{Pe03,HMPB11,DuWi16,JRSWW16,JRSWW18,Wilde15,SuBeTo17,Je17}.

Both Bayesian inverses and Petz recovery maps agree a.e.\ in the case of commutative algebras, so that both can technically be viewed as generalizations of Bayesian inversion to the non-commutative setting. However, they are in general different (not even a.e.\ equivalent) on non-commutative algebras when the corresponding states are not faithful. For some illustrative examples exemplifying the difference in Bayesian inference in quantum systems, see~\cite{PaQPL21}. In this paper, we will mainly focus on the Bayesian inverse, which we feel deserves further study. But before getting there, we will extend some of the results of~\cite{PaRuBayes} regarding the existence of Bayesian inverses.

\subsection{Bayesian inversion and the modular group}

\begin{proposition}
\label{prop:ACbayesconditionsfactorcase}
Let $\mathcal{B}:=\matr_{n}(\CC)\xstoch{F}\matr_{m}(\CC)=:\mathcal{A}$ be a UCP map and let $\mathcal{A}\xstoch{\omega=\tr(\rho\;\cdot\;)}\mathbb{C}$ be a faithful state on $\mathcal{A}$, with pullback $\xi:=\omega\circ F=:\tr(\sigma\;\cdot\;)$ that is also faithful. Then the following are equivalent. 
\begin{enumerate}[i.]
\itemsep0pt
\item
\label{item:UCPbayesexists}
A Bayesian inverse of $(F,\omega)$ exists (and is necessarily unique). 
\item
\label{item:simplemodularconditionforbayes}
$F(\sigma B)\rho=\rho F(B\sigma)$ for all $B\in\mathcal{B}$.  
\item
\label{item:ACmodularcondition}
$F$ acts as an intertwiner for the modular groups of $\omega$ and $\xi$, i.e., $F\circ \mg_{(\B,\xi)}^{t}=\mg^{t}_{(\A,\omega)}\circ F$ for all $t\in \RR$. 
\end{enumerate}
\end{proposition}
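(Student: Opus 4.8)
The plan is to prove a cycle of implications, establishing the equivalences by showing $(\ref{item:UCPbayesexists}\Leftrightarrow\ref{item:simplemodularconditionforbayes})$ through direct computation with the defining equation, and then $(\ref{item:simplemodularconditionforbayes}\Leftrightarrow\ref{item:ACmodularcondition})$ via a complex-analytic extension argument of the kind already deployed in Proposition~\ref{prop:disintegtypeIsubf}.

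For $(\ref{item:UCPbayesexists}\Rightarrow\ref{item:simplemodularconditionforbayes})$, I would start from the Bayesian inverse equation $\xi(G(A)B)=\omega(AF(B))$ and rewrite both sides in terms of the density matrices: the left-hand side is $\tr(\sigma G(A)B)$ and the right-hand side is $\tr(\rho A F(B))$. The key is to identify an explicit formula for $G$ from this duality. In the faithful case, the defining equation determines $G$ uniquely (this is where the parenthetical uniqueness claim enters), and one can solve for $G$ by introducing the adjoint $F^{*}$ of $F$ with respect to the Hilbert--Schmidt inner product, so that $\tr(\rho A F(B))=\tr(F^{*}(\rho A)B)$. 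Matching against $\tr(\sigma G(A)B)$ for all $B$ forces $\sigma G(A)=F^{*}(\rho A)$, hence $G(A)=\sigma^{-1}F^{*}(\rho A)$. The content of condition~\ref{item:simplemodularconditionforbayes} is precisely the requirement that this formally-defined $G$ be a legitimate (positive, and in fact completely positive) map rather than merely a linear one; so I would verify that $G$ is UCP exactly when the stated intertwining $F(\sigma B)\rho=\rho F(B\sigma)$ holds. Concretely, I expect condition~\ref{item:simplemodularconditionforbayes} to be equivalent to $G$ being $*$-preserving and well-behaved, with complete positivity then following because $F^{*}$ of a CP map is CP and conjugation by the fixed positive matrices $\rho,\sigma$ preserves this.

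For $(\ref{item:simplemodularconditionforbayes}\Leftrightarrow\ref{item:ACmodularcondition})$, I would use Lemma~\ref{lem:modulargroupinvertibledensitymatrix}, which gives $\mg^{t}_{(\A,\omega)}=\Ad(\rho^{it})$ and $\mg^{t}_{(\B,\xi)}=\Ad(\sigma^{it})$. The intertwining condition~\ref{item:ACmodularcondition} reads $F(\sigma^{it}B\sigma^{-it})=\rho^{it}F(B)\rho^{-it}$ for all $t\in\RR$ and all $B$. I would first show this analytic-in-$t$ identity extends to all complex values of the parameter by fixing vectors and matrix entries, forming a holomorphic function of $z\in\CC$, and invoking the identity theorem (Theorem~\ref{thm:identitythm}) exactly as in the proof of $(\ref{item:invarmodgrp}\Rightarrow\ref{item:producttensor})$; the functions $\rho^{z}$, $\sigma^{z}$ exist for all $z$ since $\rho,\sigma$ are strictly positive by faithfulness. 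Setting $z=-\ima$ (so that $\sigma^{iz}=\sigma$, $\rho^{iz}=\rho$) then converts the group-intertwining identity into the algebraic identity $F(\sigma B\sigma^{-1})=\rho F(B)\rho^{-1}$, which rearranges to $F(\sigma B\sigma^{-1})\rho=\rho F(B)$; replacing $B\mapsto\sigma^{-1}B$ (legitimate since $\sigma$ is invertible) yields $F(\sigma B\sigma^{-1}\cdot\sigma^{-1})$---more cleanly, a suitable substitution produces exactly $F(\sigma B)\rho=\rho F(B\sigma)$. Conversely, from~\ref{item:simplemodularconditionforbayes} I would iterate or again analytically continue to recover the one-parameter intertwining.

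\textbf{The main obstacle} I anticipate is the $(\ref{item:UCPbayesexists}\Leftrightarrow\ref{item:simplemodularconditionforbayes})$ direction, specifically verifying that the formally-solved-for $G(A)=\sigma^{-1}F^{*}(\rho A)$ is \emph{completely positive and unital} precisely under condition~\ref{item:simplemodularconditionforbayes}, rather than merely linear and $*$-preserving. The subtlety is that $\sigma^{-1}F^{*}(\rho\,\cdot\,)$ need not be positive in general; positivity requires the compatibility of $\rho$ and $\sigma$ with $F$ encoded exactly by the intertwining relation. I would handle this by checking that $G$ is $*$-preserving iff $F(\sigma B)\rho=\rho F(B\sigma)$, then leveraging the Petz-map structure (the formula $G(A)=\sigma^{-1}F^{*}(\rho A)$ is the Petz recovery map, which is automatically CP as a composition $\Ad_{\sigma^{-1/2}}\circ F^{*}\circ\Ad_{\rho^{1/2}}$ of CP maps once the relevant square roots commute appropriately under condition~\ref{item:simplemodularconditionforbayes}), and verifying unitality against the state-preserving constraint $\omega\circ F=\xi$. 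The cleanest route may be to observe that condition~\ref{item:simplemodularconditionforbayes} is exactly what makes $\sigma^{-1}F^{*}(\rho\,\cdot\,)$ equal to the manifestly-CP Petz map $\Ad_{\sigma^{-1/2}}\circ F^{*}\circ\Ad_{\rho^{1/2}}$, so I would try to show these two expressions for $G$ coincide if and only if~\ref{item:simplemodularconditionforbayes} holds.
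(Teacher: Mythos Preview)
Your plan for $(\ref{item:ACmodularcondition}\Rightarrow\ref{item:simplemodularconditionforbayes})$ matches the paper exactly: extend the real-parameter intertwining to all $z\in\CC$ via the identity theorem, then evaluate at $z=-i$ and substitute. Your treatment of $(\ref{item:UCPbayesexists}\Leftrightarrow\ref{item:simplemodularconditionforbayes})$ is a direct reconstruction of what the paper simply cites from~\cite{PaRuBayes}; your outline (solve $\sigma G(A)=F^{*}(\rho A)$, check that $*$-preservation of $G$ is equivalent to~\ref{item:simplemodularconditionforbayes}, then identify $G$ with the manifestly CP Petz map $\Ad_{\sigma^{-1/2}}\circ F^{*}\circ\Ad_{\rho^{1/2}}$) is the right shape, though the last identification is itself nontrivial and is the substance of the cited result.

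The genuine gap is in $(\ref{item:simplemodularconditionforbayes}\Rightarrow\ref{item:ACmodularcondition})$. You write ``iterate or again analytically continue,'' but analytic continuation via the identity theorem cannot work here: iterating condition~\ref{item:simplemodularconditionforbayes} gives $F(\sigma^{k}B)\rho^{k}=\rho^{k}F(B\sigma^{k})$ only for $k\in\NN$, and the integers have no accumulation point in $\CC$. The paper handles this with \emph{Carlson's theorem} (Theorem~\ref{thm:Carlson}): one defines the entire function
\[
f(z):=\big\langle w,\big(F(\sigma^{z}B\sigma^{-z})-\rho^{z}F(B)\rho^{-z}\big)v\big\rangle,
\]
verifies the exponential growth bound $|f(z)|\le Ce^{\gamma|z|}$ with $\gamma=2\max\{\lVert\log\sigma\rVert,\lVert\log\rho\rVert\}$, and crucially checks the \emph{sub-$\pi$} growth on the imaginary axis ($|f(it)|\le C$ since $\rho^{it},\sigma^{it}$ are unitary). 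The iterated identity then gives $f(k)=0$ for $k\in\NN$, and Carlson's theorem forces $f\equiv 0$. Without naming this tool and verifying its hypotheses, your sketch for this direction is incomplete; the identity theorem you invoke for the reverse direction is not sufficient.
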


We will call the intertwining condition in item~$\ref{item:ACmodularcondition}$ of Proposition~\ref{prop:ACbayesconditionsfactorcase} the \define{Accardi--Cecchini (AC) condition}. 

\begin{proof}
The equivalence between items $\ref{item:UCPbayesexists}$ and $\ref{item:simplemodularconditionforbayes}$ is covered by the results in \cite[Section~5]{PaRuBayes}. The equivalence between $\ref{item:UCPbayesexists}$ and $\ref{item:ACmodularcondition}$ is proved in~\cite[Proposition~6.1]{AcCe82} (see also~\cite[Lemma~2.5]{An06}). Nevertheless, we feel it is instructive to see a direct proof of the equivalence between $\ref{item:simplemodularconditionforbayes}$ and $\ref{item:ACmodularcondition}$. We first prove $\ref{item:ACmodularcondition}$ implies $\ref{item:simplemodularconditionforbayes}$. Since 
\[
\mg^{t}_{(\A,\omega)}=\mathrm{Ad}\big(\rho^{it}\big)
\quad \text{ and }\quad
\mg^{t}_{(\B,\xi)}=\mathrm{Ad}\big(\sigma^{it}\big)
\]
for all $t\in\CC$, item $\ref{item:ACmodularcondition}$ reads 
\[
F(\sigma^{it}B\sigma^{-it})=\rho^{it}F(B)\rho^{-it}
\]
for all $t\in\RR$ and $B\in\mathcal{B}$. By finite-dimensionality, this equation also holds for all $t\in \CC$ by the identity theorem (Theorem~\ref{thm:identitythm}). Hence, setting $t=-i$  gives $F(\sigma B\sigma^{-1})=\rho F(B)\rho^{-1}$ for all $B\in\mathcal{B}$. In particular, choosing $B$ of the form $B\sigma$ gives $F(\sigma B)=\rho F(B\sigma )\rho^{-1}$. Multiplying by $\rho$ on the right gives condition $\ref{item:simplemodularconditionforbayes}$. The direction $\ref{item:simplemodularconditionforbayes}$ implies $\ref{item:ACmodularcondition}$ is a bit more involved. First, note that $F(\sigma B)\rho=\rho F(B\sigma)$ for all $B\in\mathcal{B}$ implies 
\begin{equation}
\label{eq:higherorderBayesAC}
F(\sigma^{k} B)\rho^{k}=\rho F(\sigma^{k-1}B\sigma)\rho^{k-1}=\cdots=\rho^{k}F(B\sigma^{k})
\end{equation}
for all $B\in\mathcal{B}$ and for all $k\in\NN$. Note that this is also true when $k=0$. Fixing $B$ and vectors $v,w\in\CC^{n}$, define 
\[
\CC\ni z\mapsto f(z):=\left\langle w,\left(F(\sigma^{z}B\sigma^{-z})-\rho^{z}F(B)\rho^{-z}\right)v\right\rangle. 
\]
Then $f$ satisfies the conditions of Carlson's theorem (Theorem~\ref{thm:Carlson}) with constants 
\[
\gamma:=2\mathrm{max}\{\lVert\log\sigma\rVert,\lVert\log\rho\rVert\},\qquad 
C:=2\lVert F\rVert \lVert B\rVert \lVert v\rVert \lVert w\rVert,\qquad
\gamma':=0.
\]
Indeed, for arbitrary $z\in \CC$, 
\begin{align*}
|f(z)|&\le\Big(\big\lVert F(\sigma^{z}B\sigma^{-z})\big\rVert+\big\lVert\rho^{z}F(B)\rho^{-z}\big\rVert\Big)\lVert v\rVert\lVert w\rVert&&\text{by Cauchy--Schwarz}\\
&\le\big(\lVert\sigma^{z}\rVert\lVert \sigma^{-z}\rVert+\lVert\rho^{z}\rVert\lVert\rho^{-z}\rVert\big)\lVert F\rVert\lVert B\rVert\lVert v\rVert\lVert w\rVert&&\text{since $\big\lVert F(A)\big\rVert\le\lVert F\rVert\lVert A\rVert$}\\
&\le\Big(e^{2|z|\lVert\log(\sigma)\rVert}+e^{2|z|\lVert\log(\rho)\rVert}\Big)\lVert F\rVert\lVert B\rVert\lVert v\rVert\lVert w\rVert&&\text{since $\big\lVert e^{A}\big\rVert\le e^{\lVert A\rVert}$}\\
&\le 2\lVert F\rVert\lVert B\rVert\lVert v\rVert\lVert w\rVert e^{\gamma|z|}=Ce^{\gamma|z|},&&\label{eqn:fzcarlson1}
\end{align*}
and, since $\lVert\sigma^{it}\rVert=\lVert\rho^{it}\rVert=1$ for all $t\in\RR$, 
\begin{align*}
|f(it)|&\le\big(\lVert\sigma^{it}\rVert\lVert\sigma^{-it}\rVert+\lVert\rho^{it}\rVert\lVert\rho^{-it}\rVert\big)\lVert F\rVert\lVert B\rVert\lVert v\rVert\lVert w\rVert
=2\lVert F\rVert\lVert B\rVert\lVert v\rVert\lVert w\rVert
=Ce^{\gamma'|t|},
\end{align*}
which prove condition~$\ref{item:carlsoni}$ in the assumptions of Carlson's theorem. Condition~$\ref{item:carlsonii}$ in Carlson's theorem follows from~(\ref{eq:higherorderBayesAC}). Hence, Carlson's theorem applies and $f\equiv0$. 
Since $v,w,$ and $B$ were arbitrary, this proves $\ref{item:ACmodularcondition}$ of Proposition~\ref{prop:ACbayesconditionsfactorcase}. 
\end{proof}

The previous result also generalizes to finite-dimensional $C^*$-algebras. 

\begin{proposition}
\label{prop:ACbayesconditionsmultimatrixcase}
Let $\mathcal{B}:=\bigoplus_{y\in Y}\matr_{n_{y}}(\CC)\xstoch{F}\bigoplus_{x\in X}\matr_{m_{x}}(\CC)=:\mathcal{A}$ be a UCP map of finite-dimensional $C^*$-algebras and let $\omega=\sum_{x\in X}p_{x}\tr(\rho_{x}\;\cdot\;)$ be a faithful state on $\mathcal{A}$, with pullback $\xi:=\omega\circ F=:\sum_{y\in Y}q_{y}\tr(\sigma_{y}\;\cdot\;)$ that is also faithful. Write $F_{xy}:\matr_{n_{y}}(\CC)\stoch\matr_{m_{x}}(\CC)$ for the map ${}_{\matr_{m_{x}}(\CC)\corestriction}F_{\restriction \matr_{n_{y}}(\CC)}$ (cf.\ Notation~\ref{not:rescor}). Then the following are equivalent. 
\begin{enumerate}[i.]
\itemsep0pt
\item
\label{item:UCPbayesexistsFDCA}
A Bayesian inverse of $(F,\omega)$ exists (and is necessarily unique). 
\item
\label{item:simplemodularconditionforbayesFDCA}
$F_{xy}(\sigma_{y} B_{y})\rho_{x}=\rho_{x} F_{xy}(B_{y}\sigma_{y})$ for all $B_{y}\in\matr_{n_{y}}$ and for all $x\in X$, $y\in Y$.
\item
\label{item:ACmodularconditionFDCA}
$F$ acts as an intertwiner for the modular groups of $\omega$ and $\xi$, i.e., $F\circ \mg_{(\B,\xi)}^{t}=\mg^{t}_{(\A,\omega)}\circ F$ for all $t\in \RR$. 
\end{enumerate}
\end{proposition}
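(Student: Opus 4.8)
The plan is to reduce Proposition~\ref{prop:ACbayesconditionsmultimatrixcase} to the factor case already settled in Proposition~\ref{prop:ACbayesconditionsfactorcase} by decomposing everything along the direct-sum structure. Since $\A=\bigoplus_{x\in X}\matr_{m_x}(\CC)$ and $\B=\bigoplus_{y\in Y}\matr_{n_y}(\CC)$, both the modular groups and the map $F$ split into block components, and the defining equation of a Bayesian inverse should decouple into conditions indexed by the pairs $(x,y)$. First I would record that, by Lemma~\ref{lem:modulargroupinvertiblefdCAlg}, the modular groups are $\mg^t_{(\A,\omega)}=\bigoplus_{x\in X}\Ad(\rho_x^{it})$ and $\mg^t_{(\B,\xi)}=\bigoplus_{y\in Y}\Ad(\sigma_y^{it})$, so that the AC intertwining condition $F\circ\mg^t_{(\B,\xi)}=\mg^t_{(\A,\omega)}\circ F$ restricted to the $(x,y)$ block reads $F_{xy}(\Ad(\sigma_y^{it})(B_y))=\Ad(\rho_x^{it})(F_{xy}(B_y))$ for all $B_y$, i.e.\ the componentwise AC condition.

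\textbf{Key steps.} The implication $(\ref{item:simplemodularconditionforbayesFDCA}\Leftrightarrow\ref{item:ACmodularconditionFDCA})$ then follows directly: for each fixed pair $(x,y)$, the maps $F_{xy}:\matr_{n_y}(\CC)\stoch\matr_{m_x}(\CC)$ together with the faithful states $\tr(\rho_x\,\cdot\,)$ and $\tr(\sigma_y\,\cdot\,)$ satisfy exactly the hypotheses of the factor case, and the argument proving $(\ref{item:simplemodularconditionforbayes}\Leftrightarrow\ref{item:ACmodularcondition})$ in Proposition~\ref{prop:ACbayesconditionsfactorcase}---passing to complex $t$ via the identity theorem and Carlson's theorem---applies verbatim to each block. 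Summing over $(x,y)$ recovers the global equivalence. For the equivalence with item~$\ref{item:UCPbayesexistsFDCA}$, I would again invoke~\cite[Section~5]{PaRuBayes} and~\cite[Proposition~6.1]{AcCe82}, whose results hold for arbitrary finite-dimensional $C^*$-algebras and not merely factors; alternatively, one can observe that a UCP Bayesian inverse $G:\A\stoch\B$ exists if and only if its block components solve the Bayesian equation $\xi(G(A)B)=\omega(AF(B))$, which upon inserting the direct-sum decompositions of $\omega$, $\xi$, and $F$ separates into the scalar-weighted conditions governed by the pairs $(x,y)$.

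\textbf{Main obstacle.} The principal subtlety is bookkeeping the weights $p_x$ and $q_y$ correctly. The Bayesian equation carries the probability weights, whereas the modular-group condition in item~$\ref{item:simplemodularconditionforbayesFDCA}$ involves only the density matrices $\rho_x$ and $\sigma_y$; by the Remark following Lemma~\ref{lem:modulargroupinvertiblefdCAlg}, the modular groups are insensitive to the choice of $p_x,q_y>0$, so the conditions in items~$\ref{item:simplemodularconditionforbayesFDCA}$ and~$\ref{item:ACmodularconditionFDCA}$ are genuinely weight-independent. I expect the care required in tracking how the weights enter (or cancel) in the passage between the Bayesian equation and the blockwise modular conditions, together with verifying that the off-diagonal blocks of $F$ between distinct summands do not obstruct the decoupling, to be the only place where the multi-matrix case demands more than a direct quotation of the factor result.
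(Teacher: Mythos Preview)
Your approach is essentially the same as the paper's: decompose the modular groups via Lemma~\ref{lem:modulargroupinvertiblefdCAlg}, observe that the AC condition splits into the blockwise conditions $F_{xy}(\sigma_y^{it}B_y\sigma_y^{-it})=\rho_x^{it}F_{xy}(B_y)\rho_x^{-it}$, and then rerun the identity-theorem/Carlson argument from Proposition~\ref{prop:ACbayesconditionsfactorcase} on each block; for the equivalence with item~$\ref{item:UCPbayesexistsFDCA}$ both you and the paper defer to~\cite{PaRuBayes} (the paper cites Section~6 there, not Section~5, since Section~6 treats direct sums). One small imprecision worth cleaning up: the components $F_{xy}$ do \emph{not} literally satisfy the hypotheses of Proposition~\ref{prop:ACbayesconditionsfactorcase}---they need not be UCP, and $\tr(\rho_x\,\cdot\,)\circ F_{xy}$ need not equal $\tr(\sigma_y\,\cdot\,)$---but, as you correctly note afterward, only the \emph{argument} (which uses just linearity of $F_{xy}$ and invertibility of $\rho_x,\sigma_y$) is needed for $(\ref{item:simplemodularconditionforbayesFDCA}\Leftrightarrow\ref{item:ACmodularconditionFDCA})$, so no harm is done.
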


\begin{proof}
The equivalence between $\ref{item:UCPbayesexistsFDCA}$ and $\ref{item:simplemodularconditionforbayesFDCA}$ is covered by~\cite[Section~6]{PaRuBayes}. Therefore, we prove $\ref{item:simplemodularconditionforbayesFDCA}$ is equivalent to $\ref{item:ACmodularconditionFDCA}$. By Lemma~\ref{lem:modulargroupinvertiblefdCAlg}, the modular groups associated with $\omega$ and $\xi$ are given by 
\[
\mg^{t}_{(\A,\omega)}=\bigoplus_{x\in X}\mathrm{Ad}\big(\rho^{it}_{x}\big)
\qquad\text{ and }\qquad
\mg^{t}_{(\B,\xi)}=\bigoplus_{y\in Y}\mathrm{Ad}\big(\sigma^{it}_{y}\big)
\]
for all $t\in\RR$, though these automorphisms are well-defined even for $t\in\CC$, provided that one uses the inverse operator rather than the adjoint (the automorphism is not a $*$-isomorphism in general). A quick calculation shows that $\ref{item:ACmodularconditionFDCA}$ holds if and only if 
\[
F_{xy}(\sigma^{it}_{y}B_{y}\sigma^{-it}_{y})=\rho^{it}_{x}F_{xy}(B_{y})\rho_{x}^{-it}
\]
for all $B_{y}\in\matr_{n_{y}}$, for all $t\in\RR$, and for all $x\in X$ and $y\in Y$. Thus, the same techniques from the proof of Proposition~\ref{prop:ACbayesconditionsfactorcase} apply here. 
\end{proof}

These two results show that the condition $F(\sigma B)\rho=\rho F(B\sigma)$ for matrix algebras (and the more general equation for direct sums) is equivalent to the Accardi--Cecchini  condition for the modular group. 

\begin{remark}
Note that the condition $F(\sigma B)\rho=\rho F(B\sigma)$ is computationally easier to check than the modular group condition for two reasons: (1) a single time suffices and (2) there is no need of taking exponentials of density matrices. In fact, for $\B=\matr_{n}(\CC)$, one needs to only check at most $n^2$ equations since the condition $F(\sigma B)\rho=\rho F(B\sigma)$ is linear in $B$, so that one can plug in matrix units $B=E^{(n)}_{ij}$ (or any basis) to check this condition.
\end{remark}

More still needs to be said in the non-faithful setting, where an equation such as $F(\sigma B)\rho=\rho F(B\sigma)$ still makes sense, while the modular group condition does not. This will be elaborated upon in the remaining subsections. 

\subsection{Bayesian inverses and disintegrations}

Bayesian inverses are generalizations of disintegrations just as the adjoints of Accardi--Cecchini are generalizations of state-preserving conditional expectations. This section will explain this in more detail as well as provide some of the functorial properties of Bayesian inverses~\cite{Fr20,PaBayes}. 

\begin{proposition}
\label{prop:bayesidinst}
Under the same assumptions as in Proposition~\ref{prop:disintegtypeIsubf}, all conditions are equivalent to 
\begin{enumerate}[i.]
\setcounter{enumi}{4}
\item
The pair $(F,\omega)$ admits a Bayesian inverse (or any of the equivalent conditions in Proposition~\ref{prop:ACbayesconditionsfactorcase}). 
\end{enumerate}
\end{proposition}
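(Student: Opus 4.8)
The plan is to show that the Bayesian inverse condition (item~v) is equivalent to the already-established chain of conditions in Proposition~\ref{prop:disintegtypeIsubf}. Since Proposition~\ref{prop:disintegtypeIsubf} deals with the specific map $F(A) = \mathds{1}_k \otimes A$, which is a \emph{$*$-homomorphism} (a subalgebra inclusion), the cleanest route is to connect item~v to item~i (existence of a disintegration), because a disintegration is precisely a Bayesian inverse that additionally satisfies the a.e.\ left-inverse property $G \circ F \aeequals{\xi} \id_\B$. The key conceptual point, which I would state up front, is that for a state-preserving $*$-homomorphism $F$ with $\omega$ faithful, a Bayesian inverse is automatically a disintegration, and conversely.

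\textbf{Proof outline.}

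First I would show that item~i implies item~v. If $(F,\omega)$ admits a disintegration $G$, then $G$ is UCP, state-preserving ($\xi \circ G = \omega$), and satisfies $G \circ F = \id_\B$ (using faithfulness of $\omega$, exactly as in the proof of Proposition~\ref{prop:disintegtypeIsubf}). I claim $G$ is then a Bayesian inverse. This is the general fact that a state-preserving map whose composite with $F$ recovers the identity satisfies the Bayes equation $\xi(G(A)B) = \omega(A F(B))$; indeed, this is the content of the non-commutative Bayes' theorem relating disintegrations and Bayesian inverses from~\cite{PaRuBayes}. Concretely, one verifies the defining equation of Definition~\ref{defn:Bayesianinverse} directly, or simply invokes that a disintegration of a state-preserving UCP map is always a Bayesian inverse.

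Next I would show item~v implies one of the earlier conditions, most naturally item~iv (modular invariance) via the Accardi--Cecchini condition of Proposition~\ref{prop:ACbayesconditionsfactorcase}. Since $F$ here is the inclusion $A \mapsto \mathds{1}_k \otimes A$ with $\omega$ and $\xi$ both faithful, the existence of a Bayesian inverse is equivalent (by Proposition~\ref{prop:ACbayesconditionsfactorcase}) to the AC intertwining condition $F \circ \mg^t_{(\B,\xi)} = \mg^t_{(\A,\omega)} \circ F$ for all $t \in \RR$. Because $F$ is injective with image $\mathds{1}_k \otimes \matr_n(\CC)$, this intertwining relation forces $\mg^t_{(\A,\omega)}$ to map $F(\matr_n(\CC))$ into itself: for $A \in \matr_n(\CC)$ we get $\mg^t_{(\A,\omega)}(\mathds{1}_k \otimes A) = F(\mg^t_{(\B,\xi)}(A)) \in \mathds{1}_k \otimes \matr_n(\CC)$. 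This is exactly the modular invariance of item~iv, and since all four conditions of Proposition~\ref{prop:disintegtypeIsubf} are equivalent, the proof closes.

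\textbf{Main obstacle.}

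The step I expect to require the most care is verifying that a disintegration is genuinely a Bayesian inverse (item~i $\Rightarrow$ item~v), since a priori the two notions impose different-looking constraints: a disintegration asks for the a.e.\ left-inverse property plus state preservation, whereas a Bayesian inverse asks for the bilinear Bayes equation. The reconciliation hinges on the fact that when $F$ is a state-preserving $*$-homomorphism, the Bayes equation $\xi(G(A)B) = \omega(A F(B))$ reduces, upon using $G \circ F = \id_\B$ and the module/bimodularity structure, to the state-preservation identity; this is precisely where the results of~\cite{PaRuBayes} comparing disintegrations and Bayesian inverses do the work. Once that identification is in hand, the equivalence with the modular condition is immediate from Proposition~\ref{prop:ACbayesconditionsfactorcase}, so the only real content is matching the two recovery notions for the inclusion map $F$.
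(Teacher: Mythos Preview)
Your approach is correct and close in spirit to the paper's, but organized differently. The paper does not prove Proposition~\ref{prop:bayesidinst} directly at all; instead it immediately defers to the more general Theorem~\ref{thm:disintsandbayes} (equivalently~\cite[Corollary~8.6]{PaBayes}), which states that for finite-dimensional $C^*$-algebras, $(F,\omega)$ admits a disintegration if and only if $(F,\omega)$ admits a Bayesian inverse \emph{and} $F$ is $\omega$-a.e.\ deterministic. Since in Proposition~\ref{prop:disintegtypeIsubf} the map $F$ is an honest $*$-homomorphism (hence a.e.\ deterministic for any state), this collapses to the equivalence of items~\ref{item:tripledisint} and~v, and the other items are already linked by Proposition~\ref{prop:disintegtypeIsubf}.

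Your route differs in that you split the two directions and handle $\text{v}\Rightarrow\text{iv}$ concretely via the AC intertwining condition of Proposition~\ref{prop:ACbayesconditionsfactorcase}: the identity $\mg^{t}_{(\A,\omega)}\!\circ F = F\circ\mg^{t}_{(\B,\xi)}$ immediately gives $\mg^{t}_{(\A,\omega)}\big(F(\B)\big)\subseteq F(\B)$, which is item~\ref{item:invarmodgrp}. That is a clean self-contained argument that avoids invoking the general theorem. For $\text{i}\Rightarrow\text{v}$, your claim that a disintegration is automatically a Bayesian inverse is correct here but be careful with the phrasing ``always'': this relies on $F$ being a $*$-homomorphism (so that the Bayes equation can be verified, e.g., directly from $\rho=\tau\otimes\sigma$, or via the cited results). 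With that caveat the proposal is sound; the paper simply packages both directions into one citation rather than arguing them separately.
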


Rather than proving this, we state a much more general result that is valid for not necessarily faithful states. 

\begin{theorem}
\label{thm:disintsandbayes}
Let $(\B,\xi)\xstoch{F}(\A,\omega)$ be a state-preserving UCP map between two finite-dimensional non-commutative probability spaces with $\mathcal{A}=\bigoplus_{x\in X}\matr_{m_{x}}(\CC)$ and $\mathcal{B}=\bigoplus_{y\in Y}\matr_{n_{y}}(\CC)$ for some finite sets $X$ and $Y$. Then the following conditions are equivalent.  
\begin{enumerate}[i.]
\item
\label{item:Bindet}
The pair $(F,\omega)$ admits a Bayesian inverse and $F$ is $\omega$-a.e.\ deterministic in the sense of Definition \ref{defn:aedeterministic}.
\item
The pair $(F,\omega)$ admits a disintegration.
\end{enumerate}
\end{theorem}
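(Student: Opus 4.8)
The plan is to prove the \emph{pointwise} strengthening from which the stated equivalence of existence statements is immediate: a fixed UCP map $G\colon\A\stoch\B$ is a disintegration of $(F,\omega)$ if and only if it is a Bayesian inverse of $(F,\omega)$ \emph{and} $F$ is $\omega$-a.e.\ deterministic. Since $\omega$-a.e.\ determinism is a property of $F$ alone, this biconditional gives both directions of the theorem: a disintegration $G$ witnesses condition \ref{item:Bindet}, and conversely a Bayesian inverse $G$ together with a.e.\ determinism of $F$ produces a disintegration. Throughout I would use the finite-dimensional (or $W^*$) descriptions $N_{\eta}=\mathcal{C}P_{\eta}^{\perp}$ of the nullspace of a state $\eta$ on a $C^*$-algebra $\mathcal{C}$, the fact that $\eta$ vanishes on $N_{\eta}$ (Cauchy--Schwarz against the unit), and that $Y\ge 0$ with $\eta(Y)=0$ forces $Y\in N_{\eta}$ (write $Y=Y^{1/2}Y^{1/2}$).

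The single technical device is an \emph{a.e.\ multiplicative-domain} lemma: if $\Phi\colon\mathcal{C}\stoch\mathcal{D}$ is UCP, $\eta$ is a state on $\mathcal{D}$, and the Schwarz defect $\Phi(X^*X)-\Phi(X)^*\Phi(X)$ lies in $N_{\eta}$, then $\Phi(Y^*X)-\Phi(Y)^*\Phi(X)\in N_{\eta}$ for \emph{every} $Y\in\mathcal{C}$. This follows by applying complete positivity of $\Phi$ to the $2\times2$ matrix whose diagonal entries are the Schwarz defects of $Y$ and $X$ and whose off-diagonal entry is $\Phi(Y^*X)-\Phi(Y)^*\Phi(X)$: the operator Cauchy--Schwarz inequality for this positive $2\times2$ matrix, evaluated on vectors in the range of $P_{\eta}$ (on which the $X$-defect, being a positive element of $N_\eta$, vanishes), forces the off-diagonal entry to annihilate $P_{\eta}$, hence to lie in $N_{\eta}$.

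For the implication \emph{disintegration $\Rightarrow$ (Bayesian inverse $+$ a.e.\ determinism)}, I would run a single Kadison--Schwarz computation. For $B\in\B$ set $C:=F(B^*B)-F(B)^*F(B)\ge 0$ and $R:=G(F(B)^*F(B))-(G\circ F)(B)^*(G\circ F)(B)\ge 0$. Using $G(F(B^*B))=G(F(B)^*F(B))+G(C)$, state-preservation $\xi\circ G=\omega$, the identity $\omega\circ F=\xi$, and $\xi\big((G\circ F)(B)^*(G\circ F)(B)\big)=\xi(B^*B)$ (which comes from $(G\circ F)(B)-B\in N_{\xi}$ and Cauchy--Schwarz), one obtains $\xi(R)+\xi(G(C))=0$; positivity of both summands yields $\xi(R)=\xi(G(C))=0$. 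Thus $R\in N_{\xi}$ and $\omega(C)=\xi(G(C))=0$, so $C\in N_{\omega}$. The first conclusion says $F(B)$ is a.e.\ in the multiplicative domain of $G$, so the lemma (with $\Phi=G$, $X=F(B)$, $Y=A^*$) gives $G(AF(B))-G(A)G(F(B))\in N_{\xi}$; combined with $G(A)\big((G\circ F)(B)-B\big)\in N_{\xi}$ (left ideal) this yields $G(AF(B))-G(A)B\in N_{\xi}$, and applying $\xi$ produces the Bayesian identity $\xi(G(A)B)=\xi(G(AF(B)))=\omega(AF(B))$. The second conclusion, $C\in N_\omega$ for all $B$, fed into the lemma with $\Phi=F$, $\eta=\omega$ gives $F(B_1B_2)-F(B_1)F(B_2)\in N_{\omega}$, i.e.\ $\omega$-a.e.\ determinism.

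For the converse, \emph{(Bayesian inverse $+$ a.e.\ determinism) $\Rightarrow$ disintegration}, state-preservation is automatic by setting $B=\oneop_{\B}$ in the Bayesian identity and using unitality of $F$. It remains to check $G\circ F\aeequals{\xi}\id_{\B}$. Taking $A=F(B')$ and using a.e.\ determinism (so $F(B')F(B)-F(B'B)\in N_{\omega}$) gives
\[
\xi\big((G\circ F)(B')\,B\big)=\omega\big(F(B')F(B)\big)=\omega\big(F(B'B)\big)=\xi(B'B)
\qquad\forall\,B,B'\in\B.
\]
This has the algebra element $B$ on the wrong side for Definition~\ref{defn:ncaeequivalence}; I would fix the orientation using that $G\circ F$ is $*$-preserving and $\xi(Z^*)=\overline{\xi(Z)}$, namely replace $(B',B)$ by $(B'^{*},B^{*})$ and conjugate to obtain $\xi\big(B\,(G\circ F)(B')\big)=\xi(BB')$ for all $B,B'$, which is exactly condition~i of $\omega$-a.e.\ equivalence for $G\circ F$ and $\id_{\B}$. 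The main obstacle is precisely this non-faithfulness bookkeeping---the nullspace $N_{\xi}$ is only a left ideal---so the conjugation trick above and the support-projection formulation of the multiplicative-domain lemma are where care is required; everything else is routine Kadison--Schwarz manipulation.
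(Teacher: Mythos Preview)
Your argument is correct. The paper's own proof is a one-line citation of \cite[Corollary~8.6]{PaBayes}, so your self-contained proof is genuinely different in presentation, though it reconstructs essentially the same mechanism that underlies that cited result: an a.e.\ version of Choi's multiplicative-domain criterion combined with Kadison--Schwarz.

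A brief comparison. The paper outsources the work to an external reference, whereas you isolate the key lemma---that if the Schwarz defect $\Phi(X^*X)-\Phi(X)^*\Phi(X)$ lies in $N_\eta$ then so does the bilinear defect $\Phi(Y^*X)-\Phi(Y)^*\Phi(X)$ for every $Y$---and prove it directly via the $2\times2$ Schwarz matrix and the support projection. This makes the argument transparent and keeps it within the paper's own toolbox (finite-dimensional support projections, Kadison--Schwarz, left-ideal structure of $N_\eta$). Your ``pointwise'' formulation, showing that the \emph{same} UCP map $G$ is a disintegration iff it is a Bayesian inverse (given that $F$ is $\omega$-a.e.\ deterministic), is slightly sharper than the bare existence equivalence stated in the theorem, and is a cleaner way to organize both directions. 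The conjugation trick at the end (to move the test element to the correct side of the $\xi$-pairing) and the care with $N_\xi$ being only a left ideal are exactly the delicate points, and you handle them correctly. What the paper's approach buys is brevity; what yours buys is self-containment and an explicit identification of the one nontrivial ingredient (the a.e.\ multiplicative-domain lemma).
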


\begin{proof}
This follows from~\cite[Corollary~8.6]{PaBayes}.
\end{proof}

\begin{remark}
If $\omega$ and $\xi$ are faithful, the  conditions in
Theorem~\ref{thm:disintsandbayes} are equivalent to any of the conditions in Proposition~\ref{prop:ACbayesconditionsmultimatrixcase} and therefore also to the conditions in Theorem~\ref{thm:condexpdisint} because the existence of a state-preserving UCP left-inverse between non-degenerate quantum probability spaces \emph{guarantees} that $F$ is an injective $*$-homomorphism, see e.g.~\cite[Theorem~5]{Ma10}.

However, if the states are \emph{not} faithful, then $\N:=F(\B)$ need not be a subalgebra of $\A$ even if the disintegration condition holds, and so it is not even possible to formulate conditions~$\ref{item:spcondexpdsum}$, $\ref{item:producttensordsum}$,  and~$\ref{item:invarmodgrpdsum}$ of Theorem~\ref{thm:condexpdisint} as stated. Indeed, an example illustrating this is the UCP map 
\[
\begin{split}
\matr_{2}(\CC)&\xrightarrow{F}\matr_{4}(\CC)\\
B&\mapsto\begin{bmatrix}B&0\\0&\frac{1}{4}(B+B^{T}+\tr(A)\mathds{1}_{2})\end{bmatrix}
\end{split}
\]
together with the state $\omega$ represented by the density matrix $\left[\begin{smallmatrix}1&0&0&0\\0&0&0&0\\0&0&0&0\\0&0&0&0\\\end{smallmatrix}\right]$, and where $\xi:=\omega\circ F$ (here, $B^T$ is the transpose of $B$).
In this way, the notion of disintegration generalizes that of conditional expectation since it requires fewer assumptions and uses less structure explicit in its definition.  
\end{remark}

It is well-known that conditional expectations obey functoriality/compositionality. The same can be said of disintegrations and Bayesian inverses. Since we will need these statements later, we provide them now. 

\begin{proposition}[Compositional properties of Bayesian inverses]
\label{prop:bayescompositional}
In what follows, let $(\C,\zeta),(\B,\xi),$ and $(\A,\omega)$ be finite-dimensional quantum probability spaces. 
\begin{enumerate}[i.]
\item
\label{item:Binvid}
The identity map $\mathrm{id}_{\A}$ is a Bayesian inverse of $(\mathrm{id}_{\A},\omega)$.
\item
\label{item:Binvcomp}
Let $(\C,\zeta)\xstoch{G}(\B,\xi)\xstoch{F}(\A,\omega)$ be a pair of composable state-preserving UCP maps that admit Bayesian inverses $(\A,\omega)\xstoch{\overline{F}}(\B,\xi)\xstoch{\overline{G}}(\C,\zeta)$. Then $\overline{G}\circ\overline{F}$ is a Bayesian inverse of $(G\circ F,\omega)$. 
\item
Let $(\B,\xi)\xstoch{F}(\A,\omega)$ be an invertible UCP map, whose inverse $F^{-1}$ is UCP. Then $F^{-1}$ is a Bayesian inverse of $(F,\omega)$. 
\end{enumerate}
\end{proposition}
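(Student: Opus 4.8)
The plan is to verify each of the three compositional properties directly from Definition~\ref{defn:Bayesianinverse}, checking in each case that the proposed map is UCP and that it satisfies the defining equation $\xi(G(A)B)=\omega(AF(B))$. Each part is essentially a one-line unwinding of the definition, so I do not expect any serious obstacle; the work is bookkeeping with the states and the defining identities.

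For part~\ref{item:Binvid}, since $\mathrm{id}_\A$ is trivially UCP and the pullback of $\omega$ along $\mathrm{id}_\A$ is $\omega$ itself, the defining equation reads $\omega(\mathrm{id}_\A(A)A')=\omega(A\,\mathrm{id}_\A(A'))$, i.e.\ $\omega(AA')=\omega(AA')$, which holds tautologically. Hence $\mathrm{id}_\A$ is its own Bayesian inverse with respect to $\omega$.

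For part~\ref{item:Binvcomp}, first note that composites of state-preserving UCP maps are state-preserving and UCP, so both $G\circ F$ and $\overline{G}\circ\overline{F}$ are UCP, and $\zeta = \xi\circ G = \omega\circ F\circ G = \omega\circ(F\circ G)$ exhibits $\zeta$ as the pullback of $\omega$ along $G\circ F$. To verify the defining equation, I would compute, for $A\in\A$ and $C\in\C$,
\[
\zeta\big((\overline{G}\circ\overline{F})(A)\,C\big)
=\xi\big(\overline{F}(A)\,G(C)\big)
=\omega\big(A\,F(G(C))\big)
=\omega\big(A\,(F\circ G)(C)\big),
\]
where the first equality uses that $\overline{G}$ is a Bayesian inverse of $(G,\xi)$ applied to the element $\overline{F}(A)\in\B$ and $C\in\C$, and the second uses that $\overline{F}$ is a Bayesian inverse of $(F,\omega)$ applied to $A\in\A$ and $G(C)\in\B$. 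This is exactly the Bayesian inverse condition for $(G\circ F,\omega)$, so $\overline{G}\circ\overline{F}$ is the desired Bayesian inverse.

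For the third part, since $F$ is invertible with UCP inverse $F^{-1}$, and $F$ is state-preserving so that $\xi=\omega\circ F$, the pullback of $\xi$ along $F^{-1}$ is $\xi\circ F^{-1}=\omega\circ F\circ F^{-1}=\omega$, consistent with the roles of the states. The defining equation for $F^{-1}$ to be a Bayesian inverse of $(F,\omega)$ is $\omega(F^{-1}(B)A)=\xi(B\,F(A))$ for all $B\in\A$ and $A\in\B$; substituting $B=F(B')$ for $B'\in\B$ and using $\xi=\omega\circ F$ together with the homomorphism property of $F$ (or, more carefully, the fact that the equation is the transpose of the identity-case equation under the bijection $F$), this reduces to $\omega(B'A)=\omega(F(B'A))\cdot$(adjustment), which I would check collapses to a tautology analogous to part~\ref{item:Binvid}. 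The cleanest route is to observe that $\mathrm{id}_\B=F^{-1}\circ F$ and apply parts~\ref{item:Binvid} and~\ref{item:Binvcomp}: since $F$ admits $F^{-1}$ as a candidate inverse and $\mathrm{id}_\B$ is a Bayesian inverse of $(\mathrm{id}_\B,\xi)$, uniqueness of Bayesian inverses (in the faithful case) pins down $F^{-1}$; in general I would simply verify the defining equation by direct substitution, which is the only step requiring a brief computation.
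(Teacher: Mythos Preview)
Parts~\ref{item:Binvid} and~\ref{item:Binvcomp} are fine; your chain of equalities is exactly the intended verification (note that the composite in the statement should read $F\circ G$, as your computation and the subsequent remark in the paper both have; you correctly compute $\omega(A(F\circ G)(C))$ even though you echo the statement's $G\circ F$ in the surrounding prose).

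Part~iii, however, is not in order. Your written Bayes equation has the states on the wrong sides, and the line ``reduces to $\omega(B'A)=\omega(F(B'A))\cdot(\text{adjustment})$'' does not type-check, since $\omega$ lives on $\A$ while $B'A\in\B$. More importantly, the alternative route via parts~\ref{item:Binvid} and~\ref{item:Binvcomp} is circular: part~\ref{item:Binvcomp} takes as \emph{input} that both factors already admit Bayesian inverses, so you cannot use it to produce one for $F$. The missing ingredient is the fact (recorded in the remark immediately following the proposition, citing \cite[Corollary~4.15]{PaBayes}) that an invertible UCP map with UCP inverse is automatically a $*$-isomorphism. Once $F$ is multiplicative, the verification is the single line
\[
\xi\big(F^{-1}(A)B\big)=\omega\big(F(F^{-1}(A)B)\big)=\omega\big(F(F^{-1}(A))\,F(B)\big)=\omega\big(A\,F(B)\big),
\]
using $\xi=\omega\circ F$ in the first step and multiplicativity in the second. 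You allude to ``the homomorphism property of $F$'' but neither justify it nor deploy it correctly; that is the whole content of part~iii.
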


\begin{remark}
We have been careful about the statements of Proposition~\ref{prop:bayescompositional} when the states are not faithful. The following comments justify this caution.

\begin{enumerate}[$i.$]
\item
If $\overline{\id_{\A}}$ is a Bayesian inverse of $(\A,\omega)$, then $\overline{\id_{\A}}\aeequals{\omega}\id_{\A}$. In other words, the two maps need not be equal on the nose when $\omega$ is not faithful. However, if $\A$ is a matrix algebra, then $\overline{\id_{\A}}=\id_{\A}$, though this is a non-trivial fact~\cite[Theorem~2.48]{PaRu19}. 
\item
\label{item:Binvinv}
If $\overline{F\circ G}$ denotes a Bayesian inverse of $(F\circ G,\omega)$, then $\overline{F\circ G}\aeequals{\omega}\overline{G}\circ\overline{F}$. In other words, the composite of Bayesian inverses need not equal an arbitrary Bayesian inverse of $(F\circ G,\omega)$, but they are a.e.\ equal. 
\item
Note that $F$ and $F^{-1}$ being UCP automatically implies $F$ and $F^{-1}$ are $*$-isomorphisms (for a string-diagrammatic proof, see~\cite[Corollary~4.15]{PaBayes}).
\end{enumerate}
\end{remark}

\section{Takesaki's theorem for non-faithful states}
\label{sec:cebimgnfs}

If $\omega$ is a state on a finite-dimensional $C^*$-algebra $\A$ with support projection $P_{\omega}$ that is strictly less than $1_{\A}$, then the modular automorphism group as in Definition \ref{defn:modulargroup} does not exist. Instead, one can either define the modular automorphism group on the support algebra $P_{\omega}\A P_{\omega}$, where the state $\omega$ restricts to a faithful state, or one can define a \emph{modular automorphism semigroup} on $\A$. 
If now $\B\xrightarrow{F}\A$ is a unital injective $*$-homomorphism and $\xi:=\omega\circ F$ is the induced state, invariance of the subalgebra $P_{\omega}F(\B)P_{\omega}$ under the modular group is not enough to guarantee the existence of a state-preserving conditional expectation.
The purpose of this section is to address this and generalize Takesaki's theorem~\cite{Tak72}, which relates the existence of state-preserving conditional expectations to the modular group, to the setting of (not necessarily faithful) states on finite-dimensional $C^*$-algebras. 

\subsection{Matrix algebra case}

\begin{definition}
Let $\varphi$ be a state on a finite-dimensional $C^*$-algebra $\M$, let $P_{\varphi}$ be the support projection of $\varphi$, and set $\M_{P_{\varphi}}:=P_{\varphi}\M P_{\varphi}$ to be the \define{support algebra} associated with $(\M,\varphi)$ (this is also called the \define{corner algebra} in the literature). Also, let $\cutd_{P_{\varphi}}:\M\stoch\M_{P_\varphi}$ be the 
corestriction
${}_{\M_{P_{\varphi}}\corestriction}\Ad_{P_{\varphi}}$, which is UCP.
\end{definition}

It is immediate from this definition that the diagram 
\begin{equation}
\label{eqn:commutativesupport}
\xy0;/r.25pc/:
(12.5,7.5)*+{\mathcal{M}}="M";
(12.5,-7.5)*+{\mathcal{M}_{P_{\varphi}}}="Mp";
(-12.5,0)*+{\mathbb{C}}="C";
{\ar@{~>}"M";"Mp"^{\cutd_{P_{\varphi}}}};
{\ar@{~>}"M";"C"_{\varphi}};
{\ar@{~>}"Mp";"C"^{\varphi_{\restriction{\mathcal{M}_{P_{\varphi}}}}}};
\endxy
\end{equation}
commutes, where $\varphi_{\upharpoonright}:=\varphi_{\restriction{\mathcal{M}_{P_{\varphi}}}}\equiv\varphi\circ\iota_{P_\varphi}$ is the induced faithful state from the (non-unital) inclusion $\M_{P_{\varphi}}\xhookrightarrow{\iota_{P_\varphi}}\M$.

Although the modular group is not defined for $\varphi$, one can still define a closely related object with many similar properties since the modular automorphism group associated with $\varphi_{\upharpoonright}$ is well-defined. Indeed, set
\[
\RR\ni t\mapsto \mathfrak{m}^{t}_{(\M,\varphi)}
:=\iota_{P_\varphi}\circ\mathfrak{m}^{t}_{(P_{\varphi}\M P_{\varphi},\varphi_{\upharpoonright})}\circ\cutd_{P_{\varphi}}.
\]
Although not an automorphism group, this provides a family of partial isometries on $\M$ that agrees with the modular group when restricted to $P_{\varphi}\M P_{\varphi}$ and sends the remaining vector subspaces $P_{\varphi}\M P_{\varphi}^{\perp}$, $P_{\varphi}^{\perp}\M P_{\varphi}$, and $P_{\varphi}^{\perp}\M P_{\varphi}^{\perp}$ to zero. 
As such, and by a mild abuse of terminology this family of maps will be called the \define{modular automorphism semigroup} associated with the state. 

In summary, the modular automorphism group on the support algebra and the modular automorphism semigroup on the original algebra are related by the commutative diagrams 
\[
\xy0;/r.20pc/:
(-17.5,-7.5)*+{\M_{P_{\varphi}}}="Mr";
(17.5,-7.5)*+{\M_{P_{\varphi}}}="Nq";
(-17.5,7.5)*+{\M}="M";
(17.5,7.5)*+{\M}="N";
{\ar@{^{(}->}"Mr";"M"_{}};
{\ar@{~>}"N";"Nq"^{\cutd_{P_{\varphi}}}};
{\ar@{->}"N";"M"_{\mg_{(\M,\varphi)}^{t}}};
{\ar@{->}"Nq";"Mr"^{\mg_{(\M_{P_{\varphi}},\varphi_{\restriction})}^{t}}};
\endxy
\qquad\text{and}\qquad
\xy0;/r.20pc/:
(-17.5,-7.5)*+{\M_{P_{\varphi}}}="Mr";
(17.5,-7.5)*+{\M_{P_{\varphi}}}="Nq";
(-17.5,7.5)*+{\M}="M";
(17.5,7.5)*+{\M}="N";
{\ar@{~>}"M";"Mr"_{\cutd_{P_{\varphi}}}};
{\ar@{^{(}->}"Nq";"N"^{}};
{\ar@{->}"N";"M"_{\mg_{(\M,\varphi)}^{t}}};
{\ar@{->}"Nq";"Mr"^{\mg_{(\M_{P_{\varphi}},\varphi_{\restriction})}^{t}}};
\endxy
\]
for all $t\in\RR$.

\begin{notation}
\label{not:cornerdiagram}
It will be helpful to set up the following notation for the next few subsections. Let $(\B,\xi)\xstoch{F}(\A,\omega)$ be a state-preserving UCP map.  
Let $R:=P_{\omega}$ and $Q:=P_{\xi}$ be the support projections. Let $\B\xstoch{\cutd_{Q}}\B_{Q}$ and $\A\xstoch{\cutd_{R}}\A_{R}$ be the projection maps onto the support algebras. Finally, let $\B_{Q}\xstoch{F_{R}^{Q}}\A_{R}$ be the composite $F_{R}^{Q}:=\cutd_{R}\circ F\circ j_{Q}$, where $\B_{Q}\xhookrightarrow{j_{Q}}\B$ is the (not necessarily unital) inclusion.
\end{notation}

\begin{lemma}
In terms of Notation~\ref{not:cornerdiagram},
the map $F^{Q}_{R}$ is UCP and the diagram
\[
\xy0;/r.20pc/:
(-17.5,-10)*+{\A_{R}}="Mr";
(17.5,-10)*+{\B_{Q}}="Nq";
(-17.5,10)*+{\A}="M";
(17.5,10)*+{\B}="N";
(0,0)*+{\mathbb{C}}="C";
{\ar@{~>}"M";"Mr"_{\cutd_R}};
{\ar@{~>}"N";"Nq"^{\cutd_Q}};
{\ar@{~>}"N";"M"_{F}};
{\ar@{~>}"M";"C"|-{\,\omega\,}};
{\ar@{~>}"N";"C"|-{\,\xi\,}};
{\ar@{~>}"Mr";"C"|-{\,\omega_{\restriction}\,}};
{\ar@{~>}"Nq";"C"|-{\,\xi_{\restriction}\,}};
{\ar@{~>}"Nq";"Mr"^{F_R^Q}};
\endxy
\]
commutes.
\end{lemma}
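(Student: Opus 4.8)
The plan is to treat the two assertions separately—that $F^{Q}_{R}$ is UCP, and that the diagram commutes—with essentially all of the genuine content coming from Lemma~\ref{lem:monotonicsupportUCP}.

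For complete positivity, I would observe that $F^{Q}_{R}=\cutd_{R}\circ F\circ j_{Q}$ is a composite of CP maps: the inclusion $j_{Q}$ is a (non-unital) $*$-homomorphism, $F$ is UCP by hypothesis, and $\cutd_{R}={}_{\A_{R}\corestriction}\Ad_{P_{\omega}}$ is CP. The only point requiring an argument is unitality. Since the unit of the corner algebra $\B_{Q}$ is $P_{\xi}$, I would compute
\[
F^{Q}_{R}(1_{\B_{Q}})=\cutd_{R}\big(F(P_{\xi})\big)=P_{\omega}F(P_{\xi})P_{\omega},
\]
and then invoke the identity $P_{\omega}F(P_{\xi})P_{\omega}=P_{\omega}$ from Lemma~\ref{lem:monotonicsupportUCP}; since $P_{\omega}=1_{\A_{R}}$, this gives unitality. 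This is the step where the state-preserving hypothesis is really used, through the Kadison--Schwarz inequality underlying that lemma.

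For commutativity, the two side triangles $\omega=\omega_{\restriction}\circ\cutd_{R}$ and $\xi=\xi_{\restriction}\circ\cutd_{Q}$ are exactly instances of diagram~\eqref{eqn:commutativesupport}, hence hold by the definitions of $\omega_{\restriction}$ and $\xi_{\restriction}$, while the top triangle $\xi=\omega\circ F$ is the state-preserving assumption. The essential remaining claim is that the square commutes, i.e.\ $\cutd_{R}\circ F=F^{Q}_{R}\circ\cutd_{Q}$. Unwinding the corner maps via $j_{Q}\circ\cutd_{Q}=\Ad_{P_{\xi}}$, this is equivalent to $P_{\omega}F(P_{\xi}BP_{\xi})P_{\omega}=P_{\omega}F(B)P_{\omega}$ for all $B\in\B$, and this is where I expect the main obstacle to lie. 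I would prove it by first noting, again from Lemma~\ref{lem:monotonicsupportUCP} together with unitality of $F$, that $P_{\omega}F(P_{\xi}^{\perp})P_{\omega}=0$, and then killing the off-support blocks of $B$ by a $2$-positivity (Schwarz) argument: applying $2$-positivity of $F$ to $\left(\begin{smallmatrix}P_{\xi}^{\perp}\\ B^{*}\end{smallmatrix}\right)\left(\begin{smallmatrix}P_{\xi}^{\perp}&B\end{smallmatrix}\right)$ and compressing by $\diag(P_{\omega},P_{\omega})$ yields a positive $2\times 2$ operator matrix whose $(1,1)$ entry $P_{\omega}F(P_{\xi}^{\perp})P_{\omega}$ vanishes, which forces the off-diagonal $P_{\omega}F(P_{\xi}^{\perp}B)P_{\omega}$ to vanish as well; the adjoint identity $P_{\omega}F(BP_{\xi}^{\perp})P_{\omega}=0$ then follows since $F$ is $*$-preserving. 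Decomposing $B=P_{\xi}BP_{\xi}+P_{\xi}^{\perp}B+P_{\xi}BP_{\xi}^{\perp}$ leaves only the $P_{\xi}BP_{\xi}$ term, which is the desired identity.

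Finally, the bottom triangle $\omega_{\restriction}\circ F^{Q}_{R}=\xi_{\restriction}$ follows formally from the pieces already in hand: precomposing with the surjection $\cutd_{Q}$ and using the square together with the side and top triangles gives $\omega_{\restriction}\circ F^{Q}_{R}\circ\cutd_{Q}=\omega_{\restriction}\circ\cutd_{R}\circ F=\omega\circ F=\xi=\xi_{\restriction}\circ\cutd_{Q}$, and surjectivity of $\cutd_{Q}$ cancels it. Assembling the four triangles and the square then shows the whole diagram commutes. Apart from the square, every step is bookkeeping; the square is the one place needing more than a definition chase, and it rests squarely on Lemma~\ref{lem:monotonicsupportUCP} and the Schwarz property of CP maps.
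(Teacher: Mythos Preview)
Your proposal is correct and follows the same overall architecture as the paper: unitality of $F^{Q}_{R}$ via the identity $P_{\omega}F(P_{\xi})P_{\omega}=P_{\omega}$ from Lemma~\ref{lem:monotonicsupportUCP}, and commutativity of the square by decomposing $B$ along $P_{\xi}$ and $P_{\xi}^{\perp}$ and killing the off-support blocks. The only methodological difference is in that last step. The paper invokes the \emph{first} conclusion of Lemma~\ref{lem:monotonicsupportUCP}, namely $F(N_{\xi})\subseteq N_{\omega}$, together with the identification $N_{\omega}=\A R^{\perp}$: since $QBQ^{\perp},\,Q^{\perp}BQ^{\perp}\in N_{\xi}$, their images under $F$ lie in $\A R^{\perp}$ (and, by $*$-preservation, $F(Q^{\perp}BQ)\in R^{\perp}\A$), so compression by $R$ annihilates them in one line. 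Your $2$-positivity argument with the $2\times2$ block matrix achieves the same vanishing but re-derives it from scratch rather than quoting the nullspace inclusion already available; it is a perfectly valid alternative, just slightly longer.
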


\begin{proof} Lemma~\ref{lem:monotonicsupportUCP} implies these claims. Namely, $F^{Q}_{R}$ is unital because $F^{Q}_{R}(Q)=RF(Q)R=R$. Since $F^{Q}_{R}$ is the composite of three CP maps, $F^{Q}_{R}$ is UCP. Finally, the diagram commutes because 
\[
\begin{split}
R F(B) R
&=RF\left(QBQ+QBQ^{\perp}+Q^{\perp}BQ+Q^{\perp}BQ^{\perp}\right)R\nonumber\\
&=
RF(QBQ)R
+R\underbrace{F(QBQ^{\perp})}_{\in\A R^{\perp}}R
+R\underbrace{F(Q^{\perp}BQ)}_{\in R^{\perp}\A}R
+R\underbrace{F(Q^{\perp}BQ^{\perp})}_{\in R^{\perp}\A R^{\perp}}R\nonumber\\
&=RF(QBQ)R\nonumber\\
&=F^{Q}_{R}(QBQ).\qedhere
\end{split}
\]
\end{proof}

\begin{lemma}
\label{lem:ACnonfaithful}
In terms Notation~\ref{not:cornerdiagram}, the Accardi--Cecchini (AC) condition on the support algebra is equivalent to
\begin{equation}
\label{eqn:AConoriginal}
\Ad_{R}\circ F\circ\mg^{t}_{(\B,\xi)}=\mg^{t}_{(\A,\omega)}\circ F\circ\Ad_{Q}\qquad\forall\;t\in\RR.
\end{equation}
\end{lemma}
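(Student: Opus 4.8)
The plan is to unfold both sides of~(\ref{eqn:AConoriginal}) using the definitions of the modular automorphism semigroups and of the maps $\Ad_R$, $\Ad_Q$, and then to strip off an outer inclusion and an outer corestriction so as to land on the genuine intertwining identity for the compressed map $F^Q_R$.

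First I would record the elementary bookkeeping identities. Directly from the definition of the modular semigroup,
\[
\mathfrak{m}^t_{\omega} = \iota_R \circ \mathfrak{m}^t_{(\A_R,\omega_{\restriction})} \circ \cutd_R,
\qquad
\mathfrak{m}^t_{\xi} = j_Q \circ \mathfrak{m}^t_{(\B_Q,\xi_{\restriction})} \circ \cutd_Q,
\]
where $\iota_R:\A_R\hookrightarrow\A$ and $j_Q:\B_Q\hookrightarrow\B$ are the (non-unital) inclusions; and from the definition of $\cutd_R$ (resp.\ $\cutd_Q$) as the corestriction of $\Ad_R$ (resp.\ $\Ad_Q$) one has $\Ad_R = \iota_R\circ\cutd_R$ and $\Ad_Q = j_Q\circ\cutd_Q$. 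Substituting these into~(\ref{eqn:AConoriginal}) and recognizing the composite $\cutd_R\circ F\circ j_Q = F^Q_R$ of Notation~\ref{not:cornerdiagram}, the left-hand side collapses to $\iota_R\circ F^Q_R\circ\mathfrak{m}^t_{(\B_Q,\xi_{\restriction})}\circ\cutd_Q$ and the right-hand side to $\iota_R\circ\mathfrak{m}^t_{(\A_R,\omega_{\restriction})}\circ F^Q_R\circ\cutd_Q$, where I use that $\mathfrak{m}^t_{(\B_Q,\xi_{\restriction})}$ is an automorphism of $\B_Q$, so that it composes with $F^Q_R$ on the nose.

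The final step is to cancel the outer factors. Since $\iota_R$ is injective it may be cancelled on the left, and since $\cutd_Q$ is surjective (because $\cutd_Q\circ j_Q=\id_{\B_Q}$) it may be cancelled on the right. This reduces~(\ref{eqn:AConoriginal}) to
\[
F^Q_R\circ\mathfrak{m}^t_{(\B_Q,\xi_{\restriction})} = \mathfrak{m}^t_{(\A_R,\omega_{\restriction})}\circ F^Q_R
\qquad\forall\;t\in\RR,
\]
which is exactly the Accardi--Cecchini condition for the state-preserving UCP map $F^Q_R:(\B_Q,\xi_{\restriction})\stoch(\A_R,\omega_{\restriction})$ between \emph{non-degenerate} probability spaces, in the sense of Proposition~\ref{prop:ACbayesconditionsmultimatrixcase}. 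Conversely, pre-composing this identity with $\cutd_Q$ and post-composing with $\iota_R$ returns~(\ref{eqn:AConoriginal}), giving the claimed equivalence for every $t$ at once.

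The argument is formal bookkeeping, so there is no deep obstacle; the points that require care are (i) the two cancellation steps, which rest on injectivity of $\iota_R$ and surjectivity of $\cutd_Q$ rather than on any invertibility of $F$, and (ii) checking that $\omega_{\restriction}$ and $\xi_{\restriction}$ are faithful with $\omega_{\restriction}\circ F^Q_R=\xi_{\restriction}$, so that the phrase ``the AC condition on the support algebra'' names a genuine intertwining of honest modular groups. Both are guaranteed by the commutativity of the diagram in the preceding lemma together with Lemma~\ref{lem:monotonicsupportUCP}.
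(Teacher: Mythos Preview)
Your proof is correct and follows essentially the same approach as the paper's own argument: both unfold the modular semigroups via $\mathfrak{m}^t_{\omega}=\iota_R\circ\mathfrak{m}^t_{(\A_R,\omega_{\restriction})}\circ\cutd_R$ (and similarly for $\xi$), use $\Ad_R=\iota_R\circ\cutd_R$, and recognize the composite $\cutd_R\circ F\circ j_Q=F^Q_R$. Your presentation is slightly more streamlined in that you reduce both sides symmetrically to the form $\iota_R\circ(\slot)\circ\cutd_Q$ and cancel once using injectivity and surjectivity, whereas the paper carries out two separate chains of equalities for the two implications; but the content is the same.
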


\begin{proof}
Indeed, temporarily let $i:\A_{R}\hookrightarrow\A$ and $j:\B_{Q}\hookrightarrow\B$ denote the non-unital inclusions. If the AC condition holds on the support algebras, then 
\[
\begin{split}
\Ad_{R}\circ F\circ\mg_{(\B,\xi)}^{t}
&=j\circ \cutd_{R}\circ F\circ i\circ\mg_{(\B_{Q},\xi_{\restriction})}^{t}\circ\cutd_{Q}\\
&=j\circ F_{R}^{Q}\circ\mg_{(\B_{Q},\xi_{\restriction})}^{t}\circ\cutd_{Q}\\
&=j\circ \mg_{(\A_R,\omega_{\restriction})}^{t}\circ F_{R}^{Q}\circ\cutd_{Q}\\
&=j\circ \mg_{(\A_{R},\omega_{\restriction})}^{t}\circ \cutd_{R}\circ F\circ\Ad_{Q}\\
&=\mg_{(\A,\omega)}^{t}\circ F\circ\Ad_{Q}.
\end{split}
\]
Conversely, if~(\ref{eqn:AConoriginal}) holds, then
\[
\begin{split}
F_{R}^{Q}\circ\mg_{(\B_Q,\xi_{\restriction})}^{t}
&=\cutd_{R}\circ F\circ 
\underbrace{j\circ\cutd_{Q}}_{\Ad_{Q}}\circ\mg_{(\B,\xi)}^{t}\circ j\\
&=\cutd_{R}\circ\Ad_{R}\circ F\circ\mg_{(\B,\xi)}^{t}\circ j\\
&=\cutd_{R}\circ\mg_{(\A,\omega)}^{t}\circ F\circ\underbrace{\Ad_{Q}\circ j}_{j}\\
&=\underbrace{\cutd_{R}\circ i}_{\id_{\A_{R}}}\circ\mg_{(\A_R,\omega_{\restriction})}^{t}\circ\cutd_{R}\circ F\circ j\\
&=\mg_{(\A_R,\omega_{\restriction})}^{t}\circ F_{R}^{Q}. \qedhere
\end{split}
\]
\end{proof}

\begin{definition}
Both conditions from Lemma~\ref{lem:ACnonfaithful} will be referred to as the \define{Accardi--Cecchini (AC) condition}. 
\end{definition}

Although the AC condition is a consequence of the Bayes condition, which reads $\omega(AF(B))=\xi(G(A)B)$ for all $A\in\A$ and $B\in\B$, it is not equivalent to the Bayes condition. 
In fact, even if $F$ is an injective $*$-homomorphism (and hence describes a unital subalgebra inclusion), then the invariance of the subalgebra obtained by cutting down with the support of the state $\omega$, namely $RF(\B)R$, under the modular automorphism semigroup (which is equivalent to the AC condition) is \emph{not} sufficient for the existence of a state-preserving conditional expectation, when the states in question are not faithful. We will soon show that demanding $F^{Q}_{R}$ to be a $*$-homomorphism is a necessary condition is also sufficient when combined with the AC condition.

Before analyzing the general case, we first consider the case of (unital) inclusions of matrix algebras. In terms of Notation~\ref{not:cornerdiagram}, let $\A=\matr_{kn}(\CC)$ and $\B=\matr_{n}(\CC)$, with $k\in\NN$, and $F(B)=\mathds{1}_{k}\otimes B$ for all $B\in\B$. Represent the states $\omega$ and $\xi$, respectively on $\matr_{kn}(\CC)$ and $\matr_{n}(\CC)$, by density matrices as $\omega=\tr(\rho\;\cdot\;)$ and $\xi=\tr(\sigma\;\cdot\;)$. In this case, $F^{Q}_{R}$ is given by 
\begin{equation}
\label{eqn:Cqr}
\begin{split}
F^{Q}_{R}:\matr_{n}(\CC)_{Q}&\stoch\matr_{kn}(\CC)_{R}\\
QBQ&\mapsto R(\mathds{1}_{k}\otimes QBQ)R
.
\end{split}
\end{equation}

We will first prove a lemma that will be needed in the forthcoming Theorem \ref{thm:Takesakinonfaithfulmatrix}.

\begin{lemma}
\label{lem:supportsplitting}
The map $F^Q_R$ from~\eqref{eqn:Cqr} (see also Notation~\ref{not:cornerdiagram}) is a $\ast$-homomorphism if and only if $R=T\otimes Q$ for some projection $T \in \matr_k(\CC)$.
\end{lemma}

\begin{proof}
    We first note that if $R=T\otimes Q$, then $F^Q_R$ is easily seen to be a $\ast$-homomorphism.
    In the converse direction, first suppose that $Q=\oneop_n$ (the more general case will be considered momentarily). Then one of the following two facts must hold.
    \begin{enumerate}[(1)]
        \item $R^\perp(\oneop_k \otimes B)R=0$ for all $B \in \matr_n(\CC)$.
        \item There exist $B \in \matr_n(\CC)$ and $v \in \CC^{kn}$ such that $R^\perp(\oneop_k \otimes B)R v \neq 0$.
    \end{enumerate}
    We will show that (2) is impossible and then analyze (1).\\
    Assume (2) holds. Set $A:=R^\perp(\oneop_k \otimes B)R$ and let $v\in\CC^{kn}$ be such that $Av\neq 0$. Then $A^\ast A v\ne 0$ since $\langle Av,Av\rangle\ne0$. Hence,
    \[
    \begin{split}
        R(\oneop_k \otimes B^\ast)R(\oneop_k\otimes B)R v&=
        R(\oneop_k \otimes B^\ast B)R v\\
        &= R(\oneop_k \otimes B^\ast)(R+R^\perp)(\oneop_k\otimes B)R v\\
        &= R(\oneop_k \otimes B^\ast)R(\oneop_k\otimes B)R v
        +R(\oneop_k \otimes B^\ast) R^\perp(\oneop_k\otimes B)R v,
        \end{split}
    \]
    where the first equality follows by the $\ast$-homomorphism property of $F^Q_R$.
    Since 
    $A^\ast Av\ne0$, the above equation cannot hold, which gives a contradiction. 
    
    Hence, the only possibility is that $R^\perp (\oneop_k \otimes B)R=0$ for all $B \in \matr_n(\CC)$. Note that this also implies $R (\oneop_k \otimes B)R^\perp=0$ for all $B \in \matr_n(\CC)$ by taking the adjoint. Thus, $(\oneop_k \otimes \matr_n(\CC) ) R\CC^{nk} \subseteq R\CC^{nk}$ and $(\oneop_k \otimes \matr_n(\CC) ) R^\perp \CC^{nk} \subseteq R^\perp \CC^{nk}$, which implies
    \begin{equation*}
        R(\oneop_k \otimes B)w=R(\oneop_k \otimes B)R w + R(\oneop_k \otimes B)R^\perp w= R(\oneop_k\otimes B)Rw= (\oneop_k \otimes B)Rw
    \end{equation*}
    for all $w\in\CC^{kn}$. 
    This in turn implies that $R \in (\oneop_k \otimes \matr_n(\CC))'=\matr_k(\CC) \otimes \oneop_n$, i.e., $R=T\otimes \oneop_n$, for $T$ a projection in $\matr_k(\CC)$.

    Now let $Q$ be an arbitrary projection as in~\eqref{eqn:Cqr}. Then there exists a natural number $q$ such that $\matr_n(\CC)_Q$ is $\ast$-isomorphic to $\matr_q(\CC)$ via a unitary $Q\CC^n\xrightarrow{U} \CC^q$ (cf.~\cite[Proposition 2 and Corollary in I.2.1]{Dix81}). Since $F_R^Q$ is a $\ast$-homomorphism, the map $F_{R'}^{\oneop_q}:\matr_{q}(\CC)\stoch\matr_{kq}(\CC)_{R'}$, where $R':=(\oneop_k \otimes U)R(\oneop_k \otimes U^\ast)$, is a $\ast$-homomorphism as well (note that the expression for $R'$ is well-defined because $(\oneop_k \otimes U)R(\oneop_k \otimes U^\ast)=(\oneop_k \otimes U)(\oneop_k \otimes Q)R(\oneop_k \otimes Q)(\oneop_k \otimes U^\ast)$ by the fact that $\oneop_k \otimes Q\ge R$, which itself follows from Lemma~\ref{lem:monotonicsupportUCP}). By using the previous argument, we have $(\oneop_k \otimes U)R(\oneop_k \otimes U^\ast)= T\otimes \oneop_q$ for some $T \in \matr_k(\CC)$ a projection. Thus, $R=(\oneop_k \otimes U^\ast)(T\otimes \oneop_q)(\oneop_k \otimes U)=T\otimes Q$, and the claim is proved. 
\end{proof}

\begin{theorem}[Non-faithful state generalization of Takesaki's theorem on matrix algebras]
\label{thm:Takesakinonfaithfulmatrix}
Given the data set up in the paragraph containing~\eqref{eqn:Cqr}, the following are equivalent.
\begin{enumerate}[i.]
\item
\label{item:ceexists}
There exists a unique 
$\omega$-preserving conditional expectation $\xymatrix@1{
\matr_{kn}(\CC) \ar@{~>}[r]^{E} & \matr_{kn}(\CC)}$ onto the subalgebra $\mathds{1}_{k}\otimes\matr_{n}(\CC)\cong\matr_{n}(\CC)$, i.e., $(F,\omega)$ admits a disintegration. 
\item
\label{item:Cqrdisint}
The pair $(F^{Q}_{R},\omega_{\restriction})$ has a disintegration. 
\item
\label{item:CqrhomAC}
The map $F^{Q}_{R}$ defined in~(\ref{eqn:Cqr}) is a unital $\ast$-homomorphism 
and satisfies the Accardi--Cecchini condition, i.e., 
\begin{equation}
\label{eqn:AccardiCecchini}
\mg_{(\matr_{kn}(\CC)_{R},\omega_{\restriction})}^{t} \circ F^{Q}_{R} = F^{Q}_{R} \circ \mg_{(\matr_{n}(\CC)_{Q},\xi_{\restriction})}^{t}\qquad \forall\;t \in \mathbb{R}.
\end{equation} 
\end{enumerate}
\end{theorem}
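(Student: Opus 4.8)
The plan is to show that all three conditions are equivalent to the single density-matrix factorization $\rho=\tau\otimes\sigma$ for some density matrix $\tau\in\matr_k(\CC)$, since this is exactly the condition that characterizes disintegrations of $(F,\omega)$ by Proposition~\ref{prop:disintegtypeIsubfNF}. Condition~$\ref{item:ceexists}$ is equivalent to this factorization by that proposition, so it remains to tie $\ref{item:Cqrdisint}$ and $\ref{item:CqrhomAC}$ to it. The key structural observation is that, since $\A=\matr_{kn}(\CC)$ is a factor, the support algebras $\A_R=R\A R$ and $\B_Q=Q\B Q$ are again full matrix algebras, and $\omega_{\restriction},\xi_{\restriction}$ are \emph{faithful} states on them with $\xi_{\restriction}=\omega_{\restriction}\circ F^{Q}_{R}$; thus the support layer is entirely in the faithful regime, where the earlier results apply verbatim.

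For $\ref{item:Cqrdisint}\Leftrightarrow\ref{item:CqrhomAC}$ I would invoke the faithful-state machinery directly. By Theorem~\ref{thm:disintsandbayes} applied to the state-preserving UCP map $F^{Q}_{R}:(\B_{Q},\xi_{\restriction})\stoch(\A_{R},\omega_{\restriction})$, a disintegration of $(F^{Q}_{R},\omega_{\restriction})$ exists if and only if $(F^{Q}_{R},\omega_{\restriction})$ admits a Bayesian inverse and $F^{Q}_{R}$ is $\omega_{\restriction}$-a.e.\ deterministic. Because $\omega_{\restriction}$ is faithful, $N_{\omega_{\restriction}}=\{0\}$, so $\omega_{\restriction}$-a.e.\ determinism in the sense of Definition~\ref{defn:aedeterministic} collapses to genuine multiplicativity, i.e.\ $F^{Q}_{R}$ is a unital $*$-homomorphism. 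Likewise, since $\omega_{\restriction}$ and $\xi_{\restriction}$ are faithful on the matrix algebras $\A_{R}$ and $\B_{Q}$, Proposition~\ref{prop:ACbayesconditionsfactorcase} identifies the existence of a Bayesian inverse with the Accardi--Cecchini intertwining condition~(\ref{eqn:AccardiCecchini}). Combining these gives exactly $\ref{item:CqrhomAC}$.

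The remaining and most delicate link is $\ref{item:CqrhomAC}\Leftrightarrow(\rho=\tau\otimes\sigma)$. For the forward direction I would first extract the consequence of $F^{Q}_{R}$ being multiplicative. Writing $X_{i}:=\mathds{1}_{k}\otimes QB_{i}Q$, multiplicativity reads $RX_{1}X_{2}R=RX_{1}RX_{2}R$ for all $B_{1},B_{2}$, i.e.\ $RX_{1}R^{\perp}X_{2}R=0$; taking $X_{2}=X_{1}^{*}$ turns this into $(RX_{1}R^{\perp})(RX_{1}R^{\perp})^{*}=0$, hence $RXR^{\perp}=0$ for all $X\in\mathds{1}_{k}\otimes\B_{Q}$, which means $R$ commutes with $\mathds{1}_{k}\otimes Q\matr_{n}(\CC)Q$. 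On the other hand, Lemma~\ref{lem:monotonicsupportUCP} gives $F(Q^{\perp})=\mathds{1}_{k}\otimes Q^{\perp}\le R^{\perp}$, so $R\le\mathds{1}_{k}\otimes Q$ and therefore $R\in\matr_{k}(\CC)\otimes Q\matr_{n}(\CC)Q$. Inside this algebra the commutant of $\mathds{1}_{k}\otimes Q\matr_{n}(\CC)Q$ is $\matr_{k}(\CC)\otimes\CC Q$, so the two facts force $R=R_{0}\otimes Q$ for a projection $R_{0}\in\matr_{k}(\CC)$. Consequently $\A_{R}\cong R_{0}\matr_{k}(\CC)R_{0}\otimes\B_{Q}$ and $F^{Q}_{R}(QBQ)=R_{0}\otimes QBQ$ is the standard subfactor inclusion, so the Accardi--Cecchini condition together with the faithful Takesaki theorem (the implication $\ref{item:invarmodgrp}\Rightarrow\ref{item:producttensor}$ of Proposition~\ref{prop:disintegtypeIsubf}) yields $\rho=R\rho R=\tau'\otimes\sigma$ with $\tau'$ an invertible density on $R_{0}\matr_{k}(\CC)R_{0}$; extending $\tau'$ by zero to a density matrix $\tau\in\matr_{k}(\CC)$ gives $\rho=\tau\otimes\sigma$. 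The converse is a direct computation: if $\rho=\tau\otimes\sigma$ then $R=P_{\tau}\otimes Q$, whence $F^{Q}_{R}(QBQ)=P_{\tau}\otimes QBQ$ is a unital $*$-homomorphism and $\rho_{\restriction}=\tau_{\restriction}\otimes\sigma$ is a product state, which gives the Accardi--Cecchini condition by the easy implication $\ref{item:producttensor}\Rightarrow\ref{item:invarmodgrp}$ of Proposition~\ref{prop:disintegtypeIsubf}.

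I expect the factorization $R=R_{0}\otimes Q$ of the support projection to be the crux of the argument, and indeed the whole point of the non-faithful generalization: the Accardi--Cecchini (modular) condition only sees the corner $\A_{R}$ and is insufficient on its own (\cf Remark~\ref{rmk:droppingfaithfulness}), so the extra hypothesis that $F^{Q}_{R}$ be a genuine $*$-homomorphism is precisely what is needed to propagate the product structure from the support algebra back to all of $\A$. The bookkeeping identity $\cutd_{R}\circ F=F^{Q}_{R}\circ\cutd_{Q}$ and the inequality $R\le\mathds{1}_{k}\otimes Q$ are the two small facts that make this propagation rigorous.
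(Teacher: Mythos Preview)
Your proof is correct and complete. The equivalence $\ref{item:Cqrdisint}\Leftrightarrow\ref{item:CqrhomAC}$ proceeds exactly as in the paper (via Theorem~\ref{thm:disintsandbayes} and Proposition~\ref{prop:ACbayesconditionsfactorcase} on the faithful corners), so the only substantive comparison is in the bridge between $\ref{item:CqrhomAC}$ and $\ref{item:ceexists}$.

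The paper establishes $\ref{item:CqrhomAC}\Rightarrow\ref{item:ceexists}$ by invoking the classification of $*$-homomorphisms between matrix algebras: it produces a unitary $U$ with $\Ad_{U}\circ F^{Q}_{R}(QBQ)=S\otimes QBQ$, extends $U$ to a unitary $V$ on $(\mathds{1}_{k}\otimes Q)\CC^{kn}$, sets $L=VR$, and then via a matrix-unit computation shows $\Ad_{L}=\Ad_{W\otimes Q}$ for some partial isometry $W$, finally arriving at $R=T\otimes Q$ with $T:=\widetilde{W}^{*}S\widetilde{W}$. Your route to the same factorization $R=R_{0}\otimes Q$ is considerably more direct: multiplicativity of $F^{Q}_{R}$ gives $RXR^{\perp}X^{*}R=0$, hence $RXR^{\perp}=0$, hence $[R,\mathds{1}_{k}\otimes\B_{Q}]=0$; combined with $R\le\mathds{1}_{k}\otimes Q$ (Lemma~\ref{lem:monotonicsupportUCP}) this forces $R$ into the relative commutant $\matr_{k}(\CC)\otimes\CC Q$. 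This commutant argument is shorter and avoids the unitary bookkeeping entirely; the paper's approach, by contrast, is more constructive in that it explicitly exhibits the spatial implementation, which is what gets reused in the multi-matrix case (Theorem~\ref{thm:Takesakinonfaithfuldsum}). A further structural difference: you close the cycle via $\ref{item:ceexists}\Leftrightarrow(\rho=\tau\otimes\sigma)\Leftrightarrow\ref{item:CqrhomAC}$ using Proposition~\ref{prop:disintegtypeIsubfNF}, so your argument is self-contained, whereas the paper defers the implication $\ref{item:ceexists}\Rightarrow\ref{item:Cqrdisint}$ to the general Theorem~\ref{thm:Takesakinonfaithfuldsum}.
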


\begin{proof}
The proof of item~$\ref{item:ceexists}$ implies item~$\ref{item:Cqrdisint}$ will be provided in much greater generality in Theorem~\ref{thm:Takesakinonfaithfuldsum}. 
The equivalence between items~$\ref{item:Cqrdisint}$ and~$\ref{item:CqrhomAC}$ proceeds as follows. 
First, note that the Bayes condition is equivalent to the AC condition by Proposition~\ref{prop:bayesidinst}. Therefore, Theorem~\ref{thm:disintsandbayes} shows items~$\ref{item:Cqrdisint}$ and~$\ref{item:CqrhomAC}$ are equivalent since the state $\omega_{\restriction}$ is faithful.

The only thing left to prove is therefore the implication $(\ref{item:CqrhomAC}\Rightarrow \ref{item:ceexists})$, which proceeds as follows (we will freely use the equivalence between items~$\ref{item:Cqrdisint}$ and~$\ref{item:CqrhomAC}$). 
 
By using Lemma \ref{lem:supportsplitting}, we know that $R=T\otimes Q$.
Therefore, $\matr_{kn}(\CC)_{R}=\matr_{k}(\CC)_{T}\otimes\matr_{n}(\CC)_{Q}$ and 
$F^{Q}_{R}(Y)=T\otimes Y$ for all $Y\in\matr_{n}(\CC)_{Q}$.
Now, by~$\ref{item:CqrhomAC}$, \cite[Proposition~6.1]{AcCe82} (see also~\cite[Lemma~2.5]{An06}) implies the existence of a Bayesian inverse $\matr_{kn}(\CC)_{R}\xstoch{G}\matr_{n}(\CC)_{Q}$ of $(F^{Q}_{R},\varphi_{\restriction})$. 
Hence, $G$ is a disintegration by~\cite[Proposition~7.31]{PaBayes} (see also Theorem~\ref{thm:disintsandbayes}). Proposition~\ref{prop:disintegtypeIsubf} then implies $\rho=\tau\otimes\sigma$ for some invertible density matrix $\tau\in\matr_{k}(\CC)_{T}$. Viewing this equation in $\matr_{kn}(\CC)$ shows that $\rho=\tau\otimes\sigma$ and the map $\matr_{kn}(\CC)\xstoch{\overline{F}}\matr_{n}(\CC)$ defined by $\overline{F}:=\tr_{\matr_{k}(\CC)}(\tau\otimes\mathds{1}_{n}\;\cdot\;)$ is a disintegration, which furnishes a state-preserving conditional expectation $E$ by Proposition~\ref{prop:disintegtypeIsubfNF}. Uniqueness of this expectation follows from~\cite[Theorem~4.3]{PaRu19}, for example.
\end{proof}

In order to show the necessity of both the Accardi--Cecchini (AC) condition and $F^{Q}_{R}$ being a $\ast$-homomorphism in Theorem~\ref{thm:Takesakinonfaithfulmatrix}, we provide two counterexamples where only one of the two conditions holds, showing that there does not exist a state-preserving conditional expectation.

\begin{example}[AC does not hold, but $F^{Q}_{R}$ is a $\ast$-homomorphism]
It is enough to take a faithful state on $\matr_{kn}(\CC)$ in such a way that the Takesaki condition is not satisfied (i.e., the density matrix is not a pure tensor) and then $F^{Q}_{R}=F$, which is a $\ast$-homomorphism. 
\end{example}

\begin{example}[AC holds, but $F^{Q}_{R}$ is not a $\ast$-homomorphism]
This example is illustrated in~\cite[Theorem 4.3]{PaRu19} and it is about the EPR state. Let  $\matr_2(\CC) \xhookrightarrow{F} \matr_4(\CC)$ be the inclusion and set $\rho:=\frac{1}{2}\left[\begin{smallmatrix}
0 & 0 & 0 & 0\\
0 & 1 & -1 & 0\\
0 & -1 & 1 & 0\\
0 & 0 & 0 & 0\\
\end{smallmatrix}\right]$, which is the projection onto the subspace spanned by $\frac{1}{\sqrt{2}}(e_1 \otimes e_2 - e_2 \otimes e_1)$. Set $\omega:=\tr(\rho\;\cdot\;)$. The density matrix associated with the state $\xi:= \omega\circ F$ is $\sigma=\frac{1}{2} \oneop_2$. Hence, the support algebra of $\matr_4(\CC)$ becomes isomorphic to $\mathbb{C}$ because the support of $\rho$ is one-dimensional, while the support algebra of $\matr_2(\CC)$ is itself, since the state $\xi$ is faithful on it. Moreover since $\mathbb{C}$ is commutative, and since $\xi$ is tracial, we have that the AC condition is satisfied (the modular groups both act as the identity), but $F^{Q}_{R}$ cannot be a $\ast$-homomorphism, since it is a map from a higher dimensional simple algebra to a lower dimensional one. In this case a direct proof of the non disintegrability of this system is given in \cite{PaRu19}.
\end{example}

\subsection{Multi-matrix algebra case}

We now generalize Theorem~\ref{thm:Takesakinonfaithfulmatrix} to the setting of arbitrary finite-dimensional $C^*$-algebras. Since the relationships between conditional expectations, disintegrations, Bayesian inverses, and the AC condition have already been established, we state the result in its greatest generality. 

\begin{theorem}[Non-faithful state generalization of Takesaki's theorem on finite-dimensional $C^*$-algebras]
\label{thm:Takesakinonfaithfuldsum}
In terms of Notation~\ref{not:cornerdiagram} with $\A,\B$ arbitrary finite-dimensional $C^*$-algebras and $\omega,\xi$ states, as well as assuming $F$ is a unital $*$-homomorphism, the following are equivalent. 
\begin{enumerate}[i.]
\item
\label{item:disintdsum}
The pair $(F,\omega)$ admits a disintegration, i.e., there exists an  
$\omega$-preserving conditional expectation from $\A$ to itself onto the subalgebra $F(\B)$.
\item
\label{item:disintdsumcorner}
The pair $(F^{Q}_{R},\omega_{\restriction})$ admits a disintegration. In particular, $F^{Q}_{R}$ is a unital injective $\ast$-homomorphism. 
\end{enumerate}
\end{theorem}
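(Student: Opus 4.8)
The plan is to route both conditions through the explicit block/tensor-product criterion, namely condition~$\ref{item:producttensordsum}$ of Theorem~\ref{thm:condexpdisint}, which is available for arbitrary (not necessarily faithful) states on the full algebras and, because $\omega_{\restriction}$ and $\xi_{\restriction}$ are faithful, also on the corner algebras $\A_R$ and $\B_Q$. Recall that Theorem~\ref{thm:condexpdisint} says $(F,\omega)$ admits a disintegration if and only if there are non-negative $\tau_{ij}\in\matr_{c_{ij}}(\CC)$ with $\tr(\sum_{i}\tau_{ij})=1$ for $j\notin N_q$ and $p_i\rho_i=\bigboxplus_{j}(q_j\tau_{ij}\otimes\sigma_j)$ for every $i$, in the notation of Notation~\ref{not:dsum}. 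The whole argument amounts to tracking how the support projection $R=P_\omega$ interacts with this tensor decomposition and with the multiplicity data of $F$.

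For the implication $(\ref{item:disintdsum}\Rightarrow\ref{item:disintdsumcorner})$ I would begin from condition~$\ref{item:producttensordsum}$ and read off the support. Writing $Q_j$ for the support of $\sigma_j$ and $P_{\tau_{ij}}$ for the support of $\tau_{ij}$, the identity $p_i\rho_i=\bigboxplus_j q_j\tau_{ij}\otimes\sigma_j$ shows that, in each summand $i$, the projection $R$ is block-diagonal with $jj$-entry $P_{\tau_{ij}}\otimes Q_j$. Consequently $F^Q_R(QBQ)=\bigoplus_i\bigboxplus_j(P_{\tau_{ij}}\otimes Q_jB_jQ_j)$, and since each $P_{\tau_{ij}}$ is a projection this assignment is multiplicative; hence $F^Q_R$ is a unital $*$-homomorphism, injective because $\tr(\sum_i\tau_{ij})=1$ forces $P_{\tau_{ij}}\ne 0$ for some $i$ whenever $j\notin N_q$. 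The same displayed formula is exactly the faithful tensor condition for $F^Q_R$, with multiplicity spaces $\mathrm{range}(P_{\tau_{ij}})$ and invertible corner densities, so Theorem~\ref{thm:condexpdisint} applied on the corners yields a disintegration of $(F^Q_R,\omega_{\restriction})$.

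For the converse $(\ref{item:disintdsumcorner}\Rightarrow\ref{item:disintdsum})$ the faithfulness of $\omega_{\restriction}$ and $\xi_{\restriction}$ does most of the work. A state-preserving disintegration between non-degenerate spaces is automatically a left inverse to an injective $*$-homomorphism (this is the ``in particular'' clause; \cf the remark following Theorem~\ref{thm:disintsandbayes}), so Theorem~\ref{thm:condexpdisint} on the corners furnishes invertible densities putting $\rho_{\restriction}$ in tensor form. Since $\rho=j_R(\rho_{\restriction})$ is supported on $R$, extending each corner density by zero to an element of $\matr_{c_{ij}}(\CC)$ recovers the full condition~$\ref{item:producttensordsum}$ for $\rho$, whence $(F,\omega)$ admits a disintegration, again by Theorem~\ref{thm:condexpdisint}. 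The points needing care are the compatibility of the corner block structure with the multiplicities $c_{ij}$ and the treatment of the degenerate blocks ($p_i=0$, or $j\in N_q$), handled exactly as in the proof of Theorem~\ref{thm:condexpdisint}.

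I expect the genuine obstacle to be the forward direction, and precisely the claim that $F^Q_R$ is a $*$-homomorphism: this is not formal, since the cut-down $\cutd_R=\Ad_R$ is not multiplicative, and it really uses the tensor-compatibility of $R$ supplied by the disintegration. A more conceptual route to the same reduction, worth recording, uses that Bayesian inverses compose (Proposition~\ref{prop:bayescompositional}) together with the observation that $\cutd_R$ and $\cutd_Q$ admit the non-unital inclusions $j_R$ and $j_Q$ as Bayesian inverses; composing $F^Q_R=\cutd_R\circ F\circ j_Q$ then produces a Bayesian inverse of $F^Q_R$ at no cost, so by Theorem~\ref{thm:disintsandbayes} the sole remaining obstruction to a disintegration is the $\omega_{\restriction}$-a.e.\ determinism of $F^Q_R$, i.e.\ once more that $F^Q_R$ is a $*$-homomorphism. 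The residual nuisance throughout is the bookkeeping of multiplicity spaces and null blocks, which I would manage with the conventions fixed in Notation~\ref{not:dsum} and Notation~\ref{not:cornerdiagram}.
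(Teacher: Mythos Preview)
Your forward direction $(\ref{item:disintdsum}\Rightarrow\ref{item:disintdsumcorner})$ via the tensor criterion of Theorem~\ref{thm:condexpdisint} is correct and genuinely different from the paper's route. The paper instead takes a given disintegration $\overline{F}$, sets $G:=\cutd_{Q}\circ\overline{F}\circ j_{R}$, and verifies $G\circ F^{Q}_{R}=\id_{\B_{Q}}$ directly using only Kadison--Schwarz and state-preservation; as the paper remarks, that argument uses neither that $F$ is a $*$-homomorphism nor finite-dimensionality, so it extends to normal states on von~Neumann algebras. Your route is shorter in the finite-dimensional $*$-homomorphism setting but does not generalise. Your ``conceptual route'' lands on exactly the same map $G$, but the appeal to Proposition~\ref{prop:bayescompositional} is not valid as stated: $j_{Q}$ and $j_{R}$ are non-unital, hence not morphisms in the UCP category to which that proposition applies. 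One can rescue the idea by checking the Bayes identity for $G$ by hand, but then one is essentially redoing the paper's computation.

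For the converse $(\ref{item:disintdsumcorner}\Rightarrow\ref{item:disintdsum})$ your sketch and the paper share the same endgame, but you gloss over the only nontrivial step. ``Extending each corner density by zero to an element of $\matr_{c_{ij}}(\CC)$'' presupposes that the multiplicity spaces of $F^{Q}_{R}$ sit canonically inside $\CC^{c_{ij}}$, equivalently that each block $R_{xy}$ of $R$ has the form $T_{xy}\otimes Q_{y}$ for some projection $T_{xy}\in\matr_{c_{xy}}(\CC)$. The $*$-homomorphism property of $F^{Q}_{R}$ gives, after a short computation, that $R_{x}$ commutes with the block projections $F_{x}(Q_{y})$ and is therefore block-diagonal; but the finer tensor structure $R_{xy}=T_{xy}\otimes Q_{y}$ does not follow formally and is precisely what the paper establishes through the explicit partial-isometry construction (the $W_{xy}$ argument, paralleling the matrix case in Theorem~\ref{thm:Takesakinonfaithfulmatrix}). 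Your reference to ``handled exactly as in the proof of Theorem~\ref{thm:condexpdisint}'' does not help here, since that proof works on the full algebras and never confronts the corner-to-full comparison. You correctly flag this as ``needing care''; the care is the substance of the paper's proof of this implication.
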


\begin{remark}
In contrast to Theorem~\ref{thm:Takesakinonfaithfulmatrix}, The conditional expectation in item~$\ref{item:disintdsum}$ of Theorem~\ref{thm:Takesakinonfaithfuldsum} need not be unique. Indeed, let $m,n\in\NN$, with $n>1$, and consider the unital inclusion $\matr_{m}(\CC)\oplus\CC\hookrightarrow\matr_{m}(\CC)\oplus\matr_{n}(\CC)$. Let $\omega$ be the state on $\matr_{m}(\CC)\oplus\matr_{n}(\CC)$ uniquely determined by sending $A\oplus B$ to $\omega(A\oplus B):=\tr(\rho A)$ for some density matrix $\rho\in\matr_{m}(\CC)$. Then $\id_{m}\oplus\varphi:\matr_{m}(\CC)\oplus\matr_{n}(\CC)\stoch\matr_{m}(\CC)\oplus\matr_{n}(\CC)$ defines an $\omega$-preserving conditional expectation onto the subalgebra $\matr_{m}(\CC)\oplus \CC$ for \emph{any} state $\varphi$ on $\matr_{n}(\CC)$. 
\end{remark}

Before proving Theorem~\ref{thm:Takesakinonfaithfuldsum}, we generalize Lemma~\ref{lem:supportsplitting} to the multi-matrix algebra case. 

\begin{lemma}
\label{lem:supportsplittingmulti}
Let $F^Q_R$ be as in Notation~\ref{not:cornerdiagram}. Furthermore, following Notation~\ref{not:dsum} with only minor differences, assume
\[
\bigoplus_{x\in X}\matr_{m_{x}}(\CC)=:\A\xleftarrow{F}\B:=\bigoplus_{y\in Y}\matr_{n_{y}}(\CC)
\]
is given by 
\[
F\left(\bigoplus_{y\in Y}B_{y}\right)
:=\bigoplus_{x\in X}\bigboxplus_{y\in Y}(\mathds{1}_{c_{xy}}\otimes B_{y}).
\]
Write the support projections associated with the states $\omega$ and $\xi$ by
$R=\bigoplus\limits_{x\in X}R_{x}$ and $Q=\bigoplus\limits_{y\in Y}Q_{y}$, respectively. 
Then the map $F^Q_R$ is a $\ast$-homomorphism if and only if $R = \bigoplus\limits_{x \in X} \bigboxplus\limits_{y \in Y} (T_{xy}\otimes Q_{y})$ for some projections $T_{xy} \in \matr_{c_{xy}}(\CC)$.
\end{lemma}

\begin{proof}[Proof of Lemma~\ref{lem:supportsplittingmulti}]
We first note that if $R = \bigoplus\limits_{x \in X} \bigboxplus\limits_{y \in Y} (T_{xy}\otimes Q_{y})$, then $F^{Q}_{R}$ is easily seen to be a $*$-homomorphism. In the converse direction, first suppose that $Q=1_{\B}$ (the more general case will be considered momentarily). Then $R^{\perp}F(B)R=0$ for all $B\in\B$. To see this, suppose to the contrary that there exists a $B\in\B$ such that $R^{\perp}F(B)R\ne0$. Temporarily setting $A:=R^{\perp}F(B)R$, this holds if and only if $\lVert A\rVert\ne0$. Therefore, $A^\ast A\ne0$ because $\lVert A^\ast A\rVert=\lVert A\rVert^{2}\ne0$ by the $C^\ast$-identity. Furthermore, 
    \[
    \begin{split}
        RF(B^\ast)RF(B)R
        &=RF(B^\ast B)R\\
        &=RF(B^\ast)F(B)R\\
        &=RF(B^\ast)(R+R^\perp)F(B)R\\
        &=RF(B^\ast)RF(B)R
        +RF(B^\ast)R^\perp F(B)R,
    \end{split}
    \]
where the first equality follows by the $\ast$-homomorphism property of $F^Q_R$ and the second line follows from the fact that $F$ is a $*$-homomorphism. Comparing both ends of this last equation and using the fact that $A^\ast A\equiv RF(B^\ast)R^\perp F(B)R \ne 0$ gives a contradiction. 

Thus, it must be the case that $R^{\perp}F(B)R=0$ for all $B\in\B$. By taking the adjoint, it must also be the case that $RF(B)R^{\perp}=0$ for all $B\in\B$. These identities imply $F(\B)R\subseteq R\A$ and $F(\B)R^{\perp}\subseteq R^{\perp}\A$, respectively. Therefore
\[
RF(B)=RF(B)R+RF(B)R^{\perp}=RF(B)R=F(B)R
\]
for all $B\in\B$. This shows that $R\in F(\B)'\subseteq\A$ is in the commutant of $F(\B)$ inside $\A$. Since 
\[
F(\B)'
=\left(\bigoplus_{x \in X}\bigboxplus_{y \in Y}\Big(\mathds{1}_{c_{xy}}\otimes\matr_{n_{y}}(\CC)\Big)\right)'
=\bigoplus_{x \in X}\bigboxplus_{y \in Y}\Big(\matr_{c_{xy}}(\CC)\otimes\mathds{1}_{n_{y}}\Big),
\]
this implies there exist projections $T_{xy}\in\matr_{c_{xy}}(\CC)$ such that $R = \bigoplus\limits_{x \in X} \bigboxplus\limits_{y \in Y} (T_{xy}\otimes \oneop_{n_y})$. 

Now let $Q$ be an arbitrary projection as in Lemma~\ref{lem:supportsplittingmulti}. Then there exist numbers $q_{y}\in\NN\cup\{0\}$ and unitaries $Q_{y}\CC^{n_{y}}\xrightarrow{U_{y}}\CC^{q_{y}}$ for each $y\in Y$. Set $R':=\bigoplus\limits_{x \in X}R'_{x}$, where 
\[
R'_{x}:=\left(\bigboxplus_{y \in Y}(\mathds{1}_{c_{xy}}\otimes U_{y})\right)R_{x}\left(\bigboxplus_{y' \in Y}(\mathds{1}_{c_{xy'}}\otimes U^\ast_{y'})\right)
\]
is an element of $\bigoplus\limits_{x \in X} \matr_{m'_{x}}(\CC)$, with $m'_{x}=\sum\limits_{y \in Y}c_{xy}q_{y}$, for all $x\in X$. 
Note that $R'_x$ is well-defined because $R_{x}\le\bigboxplus\limits_{y \in Y}(\mathds{1}_{c_{xy}}\otimes Q_{y})$ by Lemma~\ref{lem:monotonicsupportUCP}. 
Setting $\C:=\bigoplus\limits_{y \in Y}\matr_{q_{y}}(\CC)$, we have $F^{1_{\C}}_{R'}:\C\stoch\bigoplus\limits_{x \in X}\matr_{m_{x}'}(\CC)$ is a $*$-homomorphism. By the previous argument, we have that 
$R'_{x}=\bigboxplus\limits_{y \in Y}(T_{xy}\otimes\mathds{1}_{q_{y}})$
for some collection of projections $T_{xy}\in\matr_{c_{xy}}(\CC)$. Combining this with the definition of $R'_{x}$, we obtain 
\[
\begin{split}
R&=\bigoplus_{x \in X}\left[\left(\bigboxplus_{y \in Y}(\mathds{1}_{c_{xy}}\otimes U^\ast_{y})\right)\left(\bigboxplus_{y'' \in Y}(T_{xy''}\otimes\mathds{1}_{q_{y''}})\right)\left(\bigboxplus_{y' \in Y}(\mathds{1}_{c_{xy'}}\otimes U^\ast_{y'})\right)\right]\\
&=\bigoplus_{x \in X}\bigboxplus_{y \in Y}(T_{xy}\otimes U^{*}_{y}U_{y})=\bigoplus_{x \in X}\bigboxplus_{y}(T_{xy}\otimes Q_{y}),
\end{split}
\]
which proves the claim. 
\end{proof}

\begin{proof}[Proof of Theorem~\ref{thm:Takesakinonfaithfuldsum}]
{\color{white}{you found me!}}

\noindent
$(\ref{item:disintdsum}\Rightarrow\ref{item:disintdsumcorner})$
Let $\overline{F}$ be a disintegration of $(F,\omega)$. Set $G:=\cutd_{Q}\circ\overline{F}\circ j_{R}$ (cf.\ Notation~\ref{not:cornerdiagram}). We claim that $G$ is a disintegration of $(F^{Q}_{R},\omega_{\restriction})$. We will prove this in three steps by first showing $G$ preserves states, then showing $G$ is unital, and finally showing that $G$ is a left-inverse of $F^{Q}_{R}$.

\begin{enumerate}[Step 1.]
\item
\label{item:stepGone}
$G$ preserves states because 
\begin{align*}
\xi_{\restriction}\big(G(A)\big)
&=\xi_{\restriction}\big(Q\overline{F}(RAR)Q\big)&&\text{by definition of $G$}\\
&=\xi\big(\overline{F}(RAR)\big)&&\text{by~(\ref{eqn:commutativesupport}) for $\xi$ and since $\xi=\xi\circ\Ad_{Q}$}\\
&=\omega(RAR)&&\text{since $\overline{F}$ is state-preserving}\\
&=\omega_{\restriction}(A)&&\text{by~(\ref{eqn:commutativesupport}) for $\omega$}
\end{align*}
for all $A=RAR\in\A_{R}$. 

\item
\label{item:stepGtwo}
Unitality of $G$,  meaning $G(R)=Q$, then follows from $G(R)=Q\overline{F}(R)Q=Q$, where the second equality holds by Lemma~\ref{lem:monotonicsupportUCP} since $\overline{F}$ is UCP and state-preserving. 

\item
\label{item:stepGthree}
The condition $G \circ F^{Q}_{R} = \id_{\B_{Q}}$ for $G$ to be a disintegration (with respect to the restricted faithful states on the support algebras) is equivalent to $Q\overline{F}\big(RF(B)R\big)Q=QBQ$ for all $B=QBQ\in\B_{Q}$. Since every $B$ can be written as a linear combination of at most four positive elements, it suffices to prove $Q\overline{F}\big(RF(B^*B)R\big)Q=QB^*BQ$ for all \emph{positive} $B^*B\in\B_{Q}$. For this, we first prove that $Q\overline{F}\big(RF(B^*B)R\big)Q\ge QB^*BQ$ (and afterwards, we will prove the reverse inequality). This follows from 

\begin{align*}
Q\overline{F}\big(&RF(B^*B)R\big)Q \ge
Q\overline{F}\big(RF(B)^*F(B)R\big)Q &&\text{ by Kadison--Schwarz for $F$}\\
&=Q\overline{F}\Big(\big(F(B)R\big)^*\big(F(B)R\big)\Big)Q\\
&\ge Q\overline{F}\big(F(B)R\big)^*\overline{F}\big(F(B)R\big)Q&&\text{ by Kadison--Schwarz for $\overline{F}$}\\
&=\Big(\overline{F}\big(F(B)R\big)Q\Big)^*\Big(\overline{F}\big(F(B)R\big)Q\Big)\\
&=\big(B\overline{F}(R)Q\big)^*\big(B\overline{F}(R)Q\big)&& 
\text{ by \cite[Example~8.2]{PaBayes}}\\
&=Q\overline{F}(R)B^* B\overline{F}(R)Q\\
&=Q\overline{F}(R)QB^* BQ\overline{F}(R)Q&&\text{ since $B^*B=QB^*BQ$}\\
&=QB^*BQ&&\text{ since $Q\overline{F}(R)Q=Q$ by Lemma~\ref{lem:monotonicsupportUCP}.}
\end{align*}
Using this, we can prove the other inequality as follows: 
\begin{align*}
0&\le\xi_{\restriction}\Big(Q\overline{F}\big(RF(B^*B)R\big)Q- QB^*BQ\Big)&&\text{since $Q\overline{F}\big(RF(B^*B)R\big)Q\ge QB^*BQ$}\\
&=\omega_{\restriction}\big(R F(B^*B)R\big)-\xi_{\restriction}(QB^*BQ)&&\text{since $\xi\circ\Ad_{Q}=\xi$, $\xi\circ\overline{F}=\omega$, $\omega\circ\Ad_{R}=\omega$}\\
&=\omega_{\restriction}\big(F^{Q}_{R}(QB^*B Q)\big)-\xi_{\restriction}(QB^*BQ)&&\text{by definition of $F^{Q}_{R}$}\\
&=0&&\text{since $\omega_{\restriction}\circ F^{Q}_{R}=\xi_{\restriction}$}.
\end{align*}

Since $\xi_{\restriction}$ is faithful and the above argument is positive, this proves $Q\overline{F}\big(RF(B^*B)R\big)Q=QB^*BQ$ for all $B\in\B$. As stated above, since every element of $\B_{Q}$ can be written as a linear combination of positive elements, this proves $G\circ F^{Q}_{R}=\id_{\B_{Q}}$ and completes the proof that $G$ is a disintegration of $(F^{Q}_{R},\omega_{\restriction})$.
\end{enumerate}

\vspace{3mm}
\noindent
$(\ref{item:disintdsum}\Leftarrow\ref{item:disintdsumcorner})$
Since $*$-isomorphisms are automatically disintegrable, it suffices to assume all algebras, maps, and states are as in Lemma~\ref{lem:supportsplittingmulti}.   
Using the assumption that $F^{Q}_{R}$ is a unital $*$-homomorphism, Lemma \ref{lem:supportsplittingmulti} guarantees there exists a collection of projections $T_{xy} \in \matr_{c_{xy}}(\CC)$ such that 
\[
R_x=\bigboxplus_{y \in Y}(T_{xy}\otimes Q_y).
\]

Finally, since $F^{Q}_{R}$ is disintegrable by assumption, there exist invertible positive matrices $\tau_{xy}\in\matr_{c_{xy}}(\CC)_{T_{xy}}$ such that 
\[
\rho_{x}=\bigboxplus_{y\in Y}(\tau_{xy}\otimes\sigma_{y})
\]
for all $x\in X$, which is a relation that also therefore holds in $\A_{x}=\matr_{m_{x}}(\CC)$ for each $x\in X$. Note that in this expression, the probabilities were included inside the definitions of $\rho_{x}$ and $\sigma_{y}$ to avoid clutter. Hence, $\rho_{x}$ and $\sigma_{y}$ are not necessarily density matrices but are the associated positive operators on their respective components. By Theorem~\ref{thm:condexpdisint}, a disintegration $\overline{F}:\A\to\B$ of $(F,\omega)$ exists. 
\end{proof}

\begin{remark}
Note that the proof of $(\ref{item:disintdsum} \Rightarrow \ref{item:disintdsumcorner})$ in Theorem~\ref{thm:Takesakinonfaithfuldsum} did not use the fact that $F$ is a \emph{$*$-homomorphism} nor did it use the \emph{finite-dimensionality} of the algebras $\A$ and $\B$. It is sufficient that $F$ is a state-preserving UCP map between von~Neumann algebras equipped with normal states. 
\end{remark}

\section{Non-commutative Bayesian inversion on matrix algebras}
\label{sec:Binvmgnonf}

Similar to the case of disintegrations and state-preserving conditional  expectations, consider now the more general case of a state-preserving UCP map $(\B,\xi)\xstoch{F}(\A,\omega)$ between $C^*$-algebras equipped with states. The following theorem is an enhancement of the quantum Bayes' theorem for matrix algebras from~\cite{PaRuBayes} combined with the results of the present paper. If $A$ is a matrix, we use the notation $\widehat{A}$ to indicate its Moore--Penrose inverse (pseudoinverse)~\cite{Pe55}, i.e., the unique matrix such that 
\[
A\widehat{A}A=A,
\qquad
\widehat{A}A\widehat{A}=\widehat{A},
\qquad
(A\widehat{A})^{*}=A\widehat{A},
\quad
\text{ and }
\quad
(\widehat{A}A)^{*}=\widehat{A}A. 
\]
It follows from this definition that $A\widehat{A}$ and $\widehat{A}A$ are orthogonal projections onto the range of $A$ and $A^{*}$, respectively.

\begin{definition}
\label{defn:LRBayes}
Let $(\B,\xi)\xstoch{F}(\A,\omega)$ be a state-preserving UCP map between quantum probability spaces. 
Any two unital linear maps  $G^{L},G^{R}:\A\stoch\B$ satisfying
$\omega(AF(B))=\xi(G^{L}(A)B)$
and $\omega(F(B)A)=\xi(BG^{R}(A))$ for all inputs are called \define{left} and \define{right Bayes maps}, respectively (cf.~\cite{PaQPL21,PaFu22}).
\end{definition}

\begin{remark}
Comparing this to Definition~\ref{defn:Bayesianinverse}, we see that if $G^{L}$ is UCP, then it is a Bayesian inverse. The same is true of $G^{R}$ if $G^{R}$ is UCP. This latter statement follows from the fact that $G^{R}$ is $*$-preserving. We have introduced $G^{R}$ as a \emph{linear map} in this paper to more directly connect to other works in the literature, such as theorems of Majewski--Streater~\cite{MaSt98} and Carlen--Vershynina ~\cite{CaVe20}, which we extend through our Bayes' theorem, as will be explained in Remark~\ref{rmk:CarlenVershynina}.
\end{remark}

\begin{theorem}[Non-commutative Bayesian inversion on matrix algebras]
\label{thm:ncbayesmatrix}
Let $\B\xstoch{F}\A$ be a UCP map between matrix algebras $\A:=\matr_{m}(\CC)$ and $\B:=\matr_{n}(\CC)$, let $\A\xstoch{\omega=\tr(\rho\;\cdot\;)}\CC$ be a state, and set $\xi:=\omega\circ F\equiv\tr(\sigma\;\cdot\;)$.
Let $P_{\omega}$ and $P_{\xi}$ be the support projections of $\omega$ and $\xi$, respectively. 
Let $G^{L},G^{R}:\A\stoch\B$ be any two left and right Bayes maps, respectively, so that they satisfy $P_{\xi}G^{L}(A)=\widehat{\sigma}F^*(\rho A)$ and 
$G^{R}(A)P_{\xi}=F^*(A\rho)\widehat{\sigma}$ 
for all $A\in\A$.
Finally, set
$$
\begin{aligned}[t]
\mathfrak{A}:= \sum_{i,j =1}^{\sqrt{dim(\A)}} E_{ij}^{(m)}\otimes \widehat{\sigma}F^\ast(\rho E_{ij}^{(m)})P_{\xi}
\end{aligned}
\qquad\text{and}\qquad
\begin{aligned}[t]
\mathfrak{B}:=\sum_{i,j =1}^{\sqrt{dim(\A)}} E_{ij}^{(m)}\otimes \widehat{\sigma}F^\ast(\rho E_{ij}^{(m)})P_{\xi}^\perp
\end{aligned}
$$
where $\mathfrak{A}$ is the Choi matrix associated with $\Ad_{P_{\xi}}\circ G^{L}$. 
Then the following conditions are equivalent.

\begin{enumerate}[i.]
\itemsep0pt
\item
\label{item:bayessa}
The map $\Ad_{P_{\xi}}\circ G^{R}$ (or $\Ad_{P_{\xi}}\circ G^{L}$) is $*$-preserving. 
\item
\label{item:bayesRL}
$\Ad_{P_{\xi}}\circ G^{L}=\Ad_{P_{\xi}}\circ G^{R}$.
\item
\label{item:ChoimatrixBayescorner}
$\mathfrak{A}=\mathfrak{A}^{*}$. 
\item
\label{item:PFcondition}
$P_{\xi}F^{*}(\rho A)\sigma=\sigma F^{*}(A\rho)P_{\xi}$ for all $A\in\A$.
\item
\label{item:PRcondition}
$F(\sigma B)\rho=\rho F(B\sigma)$ for all $B\in P_{\xi}\B P_{\xi}$.
\item
\label{item:ACcondition}
$\widehat{\sigma}F^\ast(\rho E_{ij}^{(m)}P_{\omega}^\perp)P_{\xi}=0$ for all $i,j$ and the map $F^{Q}_{R}$ (cf.\ Notation~\ref{not:cornerdiagram}) satisfies the AC condition, i.e., $F^{Q}_{R}\circ\mg_{\xi_{\restriction}}^{t}=\mg_{\omega_{\restriction}}^{t}\circ F^{Q}_{R}$ for all $t\in\RR$.
\item
\label{item:bayesCP}
The map $\Ad_{P_{\xi}}\circ G^{R}$ (or $\Ad_{P_{\xi}}\circ G^{L}$) is UCP.
\end{enumerate}
When one, and hence all, of these conditions hold, a formula for $G:=\Ad_{P_{\xi}}\circ G^{L}\equiv\Ad_{P_{\xi}}\circ G^{R}$ is given by 
\[
G=\mathrm{Ad}_{\sqrt{\widehat{\sigma}}}\circ F^*\circ\mathrm{Ad}_{\sqrt{\rho}}.
\]
Moreover, if any (and hence all) of the above conditions hold, then the following additional conditions are equivalent. 
\begin{enumerate}[(a)]
\itemsep0pt
\item
\label{item:BinvFw}
A Bayesian inverse of $(F,\omega)$ exists.
\item
\label{item:BinvlePx}
$\tr_{\A}\left(\mathfrak{B}^* \hat{\mathfrak{A}}\mathfrak{B} \right)\leq P_{\xi}^\perp$. 
\end{enumerate}
\end{theorem}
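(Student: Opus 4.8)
The plan is to treat the two halves of the theorem separately: first the ``pointwise'' equivalences \ref{item:bayessa}--\ref{item:bayesCP} together with the closed formula for $G$, and then the existence criterion \ref{item:BinvFw}$\Leftrightarrow$\ref{item:BinvlePx}, which is the genuinely new content. For \ref{item:bayessa}--\ref{item:bayesCP} I would argue as follows. Writing $F^*$ for the Hilbert--Schmidt adjoint of $F$, the defining relations give $G^{R}(A)=G^{L}(A^*)^*$, so $\Ad_{P_\xi}\circ G^{R}=(\Ad_{P_\xi}\circ G^{L})^{\dagger}$ where $\Phi^{\dagger}(A):=\Phi(A^*)^*$; this yields \ref{item:bayessa}$\Leftrightarrow$\ref{item:bayesRL} at once, and \ref{item:bayesRL}$\Leftrightarrow$\ref{item:ChoimatrixBayescorner} is the standard fact that a linear map is Hermiticity-preserving iff its Choi matrix is self-adjoint. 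The equivalences \ref{item:ChoimatrixBayescorner}$\Leftrightarrow$\ref{item:PFcondition}$\Leftrightarrow$\ref{item:PRcondition} are entrywise computations using only $\sigma\widehat{\sigma}=\widehat{\sigma}\sigma=P_\xi$, the $*$-preservation of $F^*$, and $\rho^*=\rho$. For \ref{item:PRcondition}$\Leftrightarrow$\ref{item:ACcondition} I would transplant the Carlson/identity-theorem argument of Proposition~\ref{prop:ACbayesconditionsfactorcase} (using Theorem~\ref{thm:Carlson} and Theorem~\ref{thm:identitythm}) to the support corners, the extra clause $\widehat{\sigma}F^{*}(\rho E_{ij}^{(m)}P_\omega^{\perp})P_\xi=0$ being exactly what discards the $P_\omega^{\perp}$ contribution in the non-faithful case. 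Finally, once any of \ref{item:bayessa}--\ref{item:ACcondition} holds, rewrite $G=\Ad_{\sqrt{\widehat{\sigma}}}\circ F^{*}\circ\Ad_{\sqrt{\rho}}$, which is manifestly CP as a composite of CP maps and unital onto the corner because $F^{*}(\rho)=\sigma$ forces $G(1_\A)=\sqrt{\widehat{\sigma}}\,\sigma\,\sqrt{\widehat{\sigma}}=P_\xi$; the converse \ref{item:bayesCP}$\Rightarrow$\ref{item:bayessa} is trivial since CP maps are $*$-preserving.

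For the existence criterion I would set up a Choi-matrix completion problem. Any Bayesian inverse $\overline{F}$ is in particular a left Bayes map, so $\sigma\overline{F}(A)=F^{*}(\rho A)$ and hence $P_\xi\overline{F}(A)=\widehat{\sigma}F^{*}(\rho A)$; decomposing the output algebra $\matr_{n}(\CC)$ into $P_\xi$- and $P_\xi^{\perp}$-blocks, complete positivity of $\overline{F}$ forces its Choi matrix into the self-adjoint shape
\[
C_{\overline{F}}=\begin{bmatrix}\mathfrak{A}&\mathfrak{B}\\\mathfrak{B}^{*}&\mathfrak{D}\end{bmatrix},
\]
with only the corner $\mathfrak{D}$ (the Choi matrix of $A\mapsto P_\xi^{\perp}\overline{F}(A)P_\xi^{\perp}$) free, while $C_{\overline{F}}\ge 0$. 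Unitality means $\tr_\A C_{\overline{F}}=1_\B$, whose $P_\xi^{\perp}$-corner reads $\tr_\A\mathfrak{D}=P_\xi^{\perp}$ (the remaining blocks $\tr_\A\mathfrak{A}=\widehat{\sigma}\sigma P_\xi=P_\xi$ and $\tr_\A\mathfrak{B}=\widehat{\sigma}\sigma P_\xi^{\perp}=0$ hold automatically). Thus a Bayesian inverse exists iff the pseudoinverse Schur-complement criterion for $C_{\overline{F}}\ge 0$ — namely $\mathfrak{A}\ge 0$, $\operatorname{ran}\mathfrak{B}\subseteq\operatorname{ran}\mathfrak{A}$, and $\mathfrak{D}\ge\mathfrak{B}^{*}\hat{\mathfrak{A}}\mathfrak{B}$ — is compatible with $\tr_\A\mathfrak{D}=P_\xi^{\perp}$. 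From here both directions are short. For \ref{item:BinvFw}$\Rightarrow$\ref{item:BinvlePx}, apply the positive map $\tr_\A$ to $\mathfrak{D}\ge\mathfrak{B}^{*}\hat{\mathfrak{A}}\mathfrak{B}$ to get $P_\xi^{\perp}=\tr_\A\mathfrak{D}\ge\tr_\A(\mathfrak{B}^{*}\hat{\mathfrak{A}}\mathfrak{B})$. For \ref{item:BinvlePx}$\Rightarrow$\ref{item:BinvFw}, note $\mathfrak{B}^{*}\hat{\mathfrak{A}}\mathfrak{B}$ is supported in the $P_\xi^{\perp}$-corner, so the slack $\Delta:=P_\xi^{\perp}-\tr_\A(\mathfrak{B}^{*}\hat{\mathfrak{A}}\mathfrak{B})$ is positive and lives there; then $\mathfrak{D}:=\mathfrak{B}^{*}\hat{\mathfrak{A}}\mathfrak{B}+\tfrac{1}{m}\,\oneop_{m}\otimes\Delta$ satisfies both $\mathfrak{D}\ge\mathfrak{B}^{*}\hat{\mathfrak{A}}\mathfrak{B}$ and $\tr_\A\mathfrak{D}=P_\xi^{\perp}$, and the associated CP unital map is a genuine Bayesian inverse because its prescribed top row gives $\sigma\overline{F}(A)=F^{*}(\rho A)$, hence $\xi(\overline{F}(A)B)=\omega(AF(B))$, using $P_\xi^{\perp}F^{*}(\rho A)=0$ (which follows from $F^{*}(\rho)=\sigma$ being $P_\xi$-supported, forcing $\sqrt{\rho}\,V_{k}P_\xi^{\perp}=0$ for any Kraus decomposition $F(B)=\sum_{k}V_{k}BV_{k}^{*}$).

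The main obstacle is the range condition $\operatorname{ran}\mathfrak{B}\subseteq\operatorname{ran}\mathfrak{A}$. It is indispensable both for the Schur-complement criterion to apply and for $\mathfrak{B}^{*}\hat{\mathfrak{A}}\mathfrak{B}$ to be the \emph{true} Schur complement (so that condition \ref{item:BinvlePx} is the correct inequality); for the stated equivalence to hold it must therefore be a \emph{consequence} of \ref{item:bayessa}--\ref{item:bayesCP} rather than an extra hypothesis. I would prove it by feeding the Petz/Kraus representation $G=\sum_{k}W_{k}(\,\cdot\,)W_{k}^{*}$ with $W_{k}=\sqrt{\widehat{\sigma}}\,V_{k}^{*}\sqrt{\rho}$ into the generators of $\mathfrak{B}$, writing the corresponding factors $\widehat{\sigma}V_{k}^{*}\rho=\sqrt{\widehat{\sigma}}\,W_{k}\sqrt{\rho}$, and invoking the support clause of \ref{item:ACcondition} to show that their vectorizations land in $\operatorname{ran}\mathfrak{A}$. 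Careful tracking of Moore--Penrose supports and of the non-faithful corners throughout is precisely what makes this step delicate, and it is where I expect the real work to lie.
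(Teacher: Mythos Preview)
Your treatment of the equivalences \ref{item:bayessa}--\ref{item:bayesCP} and of the formula for $G$ is correct and matches the paper's route exactly; the paper invokes \cite[Proposition~5.12]{PaRuBayes} for items \ref{item:bayessa}, \ref{item:PFcondition}, \ref{item:PRcondition}, \ref{item:bayesCP}, declares \ref{item:bayesRL} and \ref{item:ChoimatrixBayescorner} as easy, and proves only \ref{item:PFcondition}$\Leftrightarrow$\ref{item:ACcondition} in detail. Its argument for that step is precisely the decomposition $\rho E_{ij}^{(m)}=\rho P_{\omega}E_{ij}^{(m)}P_{\omega}+\rho P_{\omega}E_{ij}^{(m)}P_{\omega}^{\perp}$, which isolates the extra support clause and then appeals to Proposition~\ref{prop:ACbayesconditionsfactorcase} on the corner---the same manoeuvre you outline.

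Where you diverge is on \ref{item:BinvFw}$\Leftrightarrow$\ref{item:BinvlePx}: the paper simply cites \cite[Theorem~5.62]{PaRuBayes}, whereas you rederive it via the Choi-matrix completion problem and the generalised Schur complement. Your setup is exactly how that cited result is proved, and both directions you give are fine \emph{once the range inclusion $\operatorname{ran}\mathfrak{B}\subseteq\operatorname{ran}\mathfrak{A}$ is established}. You are right that this inclusion must be a consequence of \ref{item:bayessa}--\ref{item:bayesCP} for the stated equivalence to hold, and you correctly isolate it as the crux. Your sketch, however, is not yet a proof: the identity $\widehat{\sigma}V_k^{*}\rho=\sqrt{\widehat{\sigma}}\,W_k\sqrt{\rho}$ only rewrites the left Kraus legs of $\mathfrak{B}$ in terms of those of $\mathfrak{A}$; it does not by itself place the vectorisations in $\operatorname{ran}\mathfrak{A}$, because the right legs ($V_kP_{\xi}^{\perp}$ versus $W_k^{*}$) differ and the span $\{\lvert W_k\rangle\!\rangle\}$ is not obviously closed under the operation $\lvert W_k\rangle\!\rangle\mapsto(\sqrt{\rho}^{\,T}\otimes\sqrt{\widehat{\sigma}})\lvert W_k\rangle\!\rangle$ you implicitly need. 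Moreover, the ``support clause'' of item~\ref{item:ACcondition} constrains the $P_{\xi}$-corner, not the $P_{\xi}^{\perp}$-corner where $\mathfrak{B}$ lives, so invoking it here requires an additional step. The argument in \cite{PaRuBayes} closes this gap by exploiting condition~\ref{item:PRcondition} to intertwine powers of $\rho$ and $\sigma$ through $F$ (analogous to the iterated identity~(\ref{eq:higherorderBayesAC}) in Proposition~\ref{prop:ACbayesconditionsfactorcase}), which lets one show $(1_m\otimes\sqrt{\widehat{\sigma}})\mathfrak{A}(1_m\otimes\sqrt{\widehat{\sigma}})$ has the same range as $\mathfrak{A}$ and then absorbs the extra $\sqrt{\rho}$ and $\sqrt{\widehat{\sigma}}$ factors. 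Fleshing out that intertwining is the ``real work'' you anticipate.
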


In other words, the AC condition is not enough to guarantee the existence of a Bayesian inverse when the states are not faithful. Two additional constraints are needed, namely
\[
\widehat{\sigma}F^\ast(\rho E_{ij}^{(m)}P_{\omega}^\perp)P_{\xi}=0
\quad\forall\; i,j
\quad\text{ and }\quad
\tr_{\A}\left(\mathfrak{B}^* \hat{\mathfrak{A}}\mathfrak{B} \right)\leq P_{\xi}^\perp.
\]
This is to be contrasted with the previous theorems on disintegrations. 

\begin{proof}
The theorem contains two sets of claims. The equivalence between items~$\ref{item:bayessa}$, $\ref{item:PFcondition}$, $\ref{item:PRcondition}$, and $\ref{item:bayesCP}$ together with the resulting formula for $G$ was proved in~\cite[Proposition~5.12]{PaRuBayes}
(technically, $G^{L}$ was used in the statement and proof of~\cite[Proposition~5.12]{PaRuBayes}, but the proof is completely analogous with $G^{R}$).
Furthermore, the equivalence between these items and items~$\ref{item:bayesRL}$ and $\ref{item:ChoimatrixBayescorner}$ can be easily deduced from this.
Hence, the first set of equivalent conditions will be established by proving that item~$\ref{item:ACcondition}$ is equivalent to any of the other conditions. We prove item~$\ref{item:ACcondition}$ is equivalent to item~$\ref{item:PFcondition}$. We first note that 
\[
\rho E_{ij}^{(m)}
=\rho(P_\omega + P_\omega^\perp)E_{ij}^{(m)}(P_\omega + P_\omega^\perp)
=\rho P_\omega E_{ij}^{(m)}P_\omega + \rho P_\omega E_{ij}^{(m)}P_\omega^\perp.
\]
Hence, if we take $A=P_\omega E_{ij}^{(m)} P_\omega^\perp$, item~$\ref{item:PFcondition}$ implies $\widehat{\sigma}F^\ast(\rho E_{ij}^{(m)}P_\omega^\perp)P_\xi=0$. Moreover, the reverse implication holds if matrices in item~$\ref{item:PFcondition}$ are of the form $A P_\omega^\perp$. Since $F$ is $*$-preserving, the same is true for matrices of the form $P_\omega^\perp A$. In the remaining case, for matrices of the form $A= P_\omega A P_\omega$, Proposition \ref{prop:ACbayesconditionsfactorcase} completes the proof, since on the support algebra $P_\omega \A P_\omega$ the state $\omega$ is faithful and Bayesian invertibility there (which is equivalent to item~$\ref{item:PFcondition}$) is equivalent to the AC condition.

Finally, the equivalence between conditions~$(\ref{item:BinvFw})$ and~$(\ref{item:BinvlePx})$, provided that the stated assumptions hold, were established in~\cite[Theorem~5.62]{PaRuBayes}.
\end{proof}

\begin{remark}
The theorem suggests that the AC condition does not suffice to guarantee the existence of a Bayesian inverse $\overline{F}$ to $(F,\omega)$. This turns out to indeed be the case. Even if items~$\ref{item:bayessa}$ through~$\ref{item:bayesCP}$ hold, it is not automatic that the condition $\tr_{\A}\left(\mathfrak{B}^* \hat{\mathfrak{A}}\mathfrak{B} \right)\leq P_{\xi}^\perp$ holds. A simple explicit counter-example is provided in~\cite[Example~5.85]{PaRuBayes}. 
In particular, we cannot simply extend $G$ arbitrarily to a UCP map of the form 
\[
\widetilde{G}(A):=G(A)+P_{\xi}^{\perp}\zeta(A)
\]
for some state $\B\xstoch{\zeta}\CC$ as is often done for Petz recovery maps in the literature~\cite{JRSWW16,JRSWW18}. The reason is because such a $\widetilde{G}$ need not satisfy the Bayes condition (which is stronger than the AC condition).
This should be compared with Theorems~\ref{thm:Takesakinonfaithfulmatrix} and~\ref{thm:Takesakinonfaithfuldsum}, where disintegrability on the support algebras sufficed for disintegrability on the original algebras.
\end{remark}

\begin{remark}
\label{rmk:Stinespring}
Although every UCP map $\B\xstoch{F}\A$ between finite-dimensional $C^*$-algebras has a Stinespring representation of the form 
\[
\xy0;/r.20pc/:
(-15,7.5)*+{\B}="B";
(15,7.5)*+{\A}="A";
(0,-7.5)*+{\C}="C";
{\ar@{~>}"B";"A"^{F}};
{\ar@{~>}"C";"A"_{G}};
{\ar"B";"C"_{\pi}};
\endxy
\]
with $\pi$ a unital $*$-homomorphism and $G$ a \define{pure} UCP map. 
Recall that pure maps/processes as defined in~\cite[Definition~2.32]{SSC21} between multi-matrix algebras are characterized in~\cite[Proposition~B.2]{SSC21}. If $\A=\bigoplus_{x\in X}\matr_{m_{x}}(\CC)$ and $\B=\bigoplus_{y\in Y}\matr_{n_{y}}(\CC)$, then Stinespring's construction applied to the $x$ component $F_{x}:\B\stoch\A\rightarrow\matr_{m_{x}}(\CC)$ provides a Hilbert space $\H_{x}$, a representation $\B\xrightarrow{\pi_{x}}\B(\H_{x})$, and an isometry $\CC^{m_{x}}\xrightarrow{V_{x}}\H_{x}$ such that $F_{x}=\Ad_{V_{x}}\circ\pi_{x}$ (see~\cite[Section~5]{Pa18} for details). Each $\Ad_{V_{x}}$ is a pure map. Hence, $\C$ can be taken to be the direct sum $\C=\bigoplus_{x\in X}\B(\H_{x})$, the map $\B\xrightarrow{\pi}\C$ sends $B$ to $\bigoplus_{x\in X}\pi_{x}(B)$, and the pure map $\C\xstoch{G}\A$ can be taken to be the direct sum $G:=\bigoplus_{x\in X}\Ad_{V_{x}}$. 

The problem of determining whether a state-preserving UCP map $(\B,\xi)\xstoch{F}(\A,\omega)$ has a Bayesian inverse or not does not just boil down to determining whether the associated Stinespring dilation $\pi$ has it. More precisely, given such a Stinespring representation, let $\zeta:=\omega\circ G$ be the induced state on $\C$. Then all maps in the Stinespring representation are state-preserving. Furthermore,  $(\C,\zeta)\xstoch{G}(\A,\omega)$ is always Bayesian invertible (without any conditions) assuming the Stinespring construction mentioned above from~\cite{SSC21, Pa18} is used (cf., \cite[Proposition~5.38]{PaRuBayes}). Therefore, one might guess that a Bayesian inverse of $(F,\omega)$ exists if and only if a Bayesian inverse (disintegration) of $(\B,\xi)\xrightarrow{\pi}(\C,\zeta)$ exists. Although it is true that if $\pi$ has a Bayesian inverse, then $F$ has a Bayesian inverse, which we know can be taken as the composite $\overline{F}:=\overline{\pi}\circ\overline{G}$ of Bayesian inverses (by Proposition~\ref{prop:bayescompositional}), there exist situations where $(F,\omega)$ has a Bayesian inverse without $(\pi,\zeta)$ having one. 

To illustrate this, 
if we take $\A,\B,\C$ to be matrix algebras, $\pi$ to be the usual $*$-homomorphism in ampliation form, and $G=\Ad_{V}$ for some coisometry $V$, one such claim is: \emph{If $\xi$ is faithful and a Bayesian inverse $\overline{\pi}$ of $(\pi,\zeta)$ exists, then $(F,\omega)$ admits a disintegration (in particular, $F$ is a $*$-homomorphism).}
Since we know there are examples of $(F,\omega)$ that are Bayesian invertible but not disintegrable, this tells us that Bayesian inverses cannot just be computed using Stinespring dilations and the disintegration theorem.

The claim can be proved as follows. Set $Q:=V^{*}V$ and let $\overline{G}$ be any Bayesian inverse, such as 
\[
\A\ni A\mapsto\overline{G}(A):=V^{*}AV+\nu(A)Q,
\]
where $\nu$ is any state on $\A$. Then
\[
(\overline{\pi}\circ\overline{G}\circ F)(B)=(\overline{\pi}\circ\overline{G}\circ G\circ\pi)(B)
=\overline{\pi}\left(Q\pi(B)Q+\nu(V\pi(B)V^{*})Q^{\perp}\right)
\]
for all $B\in\B$. By~\cite[Lemma~5.4]{PaRuBayes}, $Q^{\perp}\le P_{\zeta}^{\perp}$, where $P_{\zeta}$ is the support projection of $\zeta$. Hence, by Lemma~\ref{lem:monotonicsupportUCP}, 
\[
(\overline{\pi}\circ\overline{G}\circ F)(B)=\overline{\pi}\left(Q\pi(B)Q\right)
\]
since $N_{\xi}=0$. Similarly, since $\overline{\pi}$ is $*$-preserving, 
\[
\overline{\pi}\left(\pi(B)\right)=\overline{\pi}\left(Q\pi(B)Q\right)
\]
for all $B\in\B$. But since $\overline{\pi}$ is a disintegration of $(\pi,\zeta)$, this proves $\overline{\pi}\circ\overline{G}$ is a disintegration of $(F,\omega)$. 
\end{remark}

\begin{remark}
\label{rmk:CarlenVershynina}
The implication $(\ref{item:bayessa} \Rightarrow \ref{item:bayesCP})$ from Theorem~\ref{thm:ncbayesmatrix} holds in full generality for finite-dimensional $C^*$-algebras, as shown in~\cite[Lemma~6.19]{PaRuBayes}. It extends~\cite[Theorem~6]{MaSt98} of Majewski and Streater, who focused on the case where $\A=\B$ and $\omega=\xi$ is faithful. It also generalizes a recent result of Carlen and Vershynina~\cite[Theorem~3.1]{CaVe20}, which restricted itself to the case of faithful states and injective $*$-homomorphisms (note that generalizing their result to the case of UCP maps is not just a straightforward application of Stinespring's theorem, as explained in Remark~\ref{rmk:Stinespring}).
Let us explain this result and its generalization in some detail. 

First, let $\langle\;\cdot\;,\;\cdot\;\rangle_{\omega}$ denote the GNS bilinear form on $\A$ with respect to a state $\omega$ on $\A$, and similarly for $\langle\;\cdot\;,\;\cdot\;\rangle_{\xi}$ on $\B$ with $\xi$ a state on $\B$. Assume $\xi$ is faithful so that $\langle\;\cdot\;,\;\cdot\;\rangle_{\xi}$ is an inner product. Let $(\B,\xi)\xrightarrow{F}(\A,\omega)$ be a state-preserving $*$-homomorphism. Let $\A\xstoch{G}\B$ be the right Bayes map of $(F,\omega)$, which is unique because $\xi$ is faithful. 
Then $G$ automatically satisfies Equation (1.17) in~\cite{CaVe20} since that equation reads
\[
\langle F(B),A\rangle_{\omega}=\langle B,G(A)\rangle_{\xi} \qquad\forall\;A\in\A,\;B\in\B,
\]
which in terms of the definition of the GNS bilinear forms becomes
\[
\omega\big(F(B)^* A\big)=\xi\big(B^* G(A)\big) \qquad\forall\;A\in\A,\;B\in\B.
\]
This agrees exactly with the Bayes condition written in reverse order (cf.\ Definition~\ref{defn:LRBayes}), i.e., 
\begin{equation}
\label{eq:rightBayes}
\omega\big(F(B) A\big)=\xi\big(B G(A)\big) \qquad\forall\;A\in\A,\;B\in\B
\end{equation}
because $F$ is $*$-preserving and $*$ is an involution.
Furthermore, it follows from this formula, and the fact that $F$ is a $*$-homomorphism, that $E:=F\circ G$ is \emph{automatically} a projection (meaning $E^{2}=E$) onto $F(\B)$ 
(the fact that it is orthogonal in the sense of \cite{CaVe20} is precisely the GNS inner product condition), but is not necessarily $\ast$-preserving, nor CP. In fact, $G\circ F=\id_{\B}$. To see this, first note that 
\begin{align*}
\xi\Big(BG\big(F(B')\big)\Big)&=\omega\Big(F(B)F(B')\Big)&&\text{by (\ref{eq:rightBayes})}\\
&=(\omega\circ F)\big(BB'\big)&&\text{since $F$ is deterministic}\\
&=\xi\big(BB'\big)&&\text{since $F$ is state-preserving}
\end{align*}
for all $B,B'\in\B$.
In other words, $G\circ F\aeequals{\xi}\id_{\B}$. But since $\xi$ is faithful, this means $G\circ F=\id_{\B}$. 

When $F$ is therefore replaced with a UCP map, as in Theorem~\ref{thm:ncbayesmatrix}, it no longer makes sense to ask for a projection onto some subalgebra of $\A$. For one, $F(\B)$ is only an \emph{operator system} inside $\A$. Secondly, if we replaced the projection condition with some left-inverse condition, such as $G\circ F\aeequals{\xi}\id_{\B}$, then we know that this necessarily implies that $F$ is $\omega$-a.e.\ deterministic. Nevertheless, one always has the right Bayes map (which reduces to the orthogonal projection of~\cite{CaVe20} when $F$ is an injective $*$-homomorphism). In this way, Theorem~\ref{thm:ncbayesmatrix} item~$\ref{item:bayessa}$ ($G$ is $\ast$-preserving) implies item~$\ref{item:bayesCP}$ ($G$ is CP) is a generalization of~\cite[Theorem~3.1]{CaVe20}.
\end{remark}

\section{Discussion and outlook}
\label{sec:outlook}

In this article, we showed how the Tomita--Takesaki modular automorphism group (or semigroup) is related to disintegrations and Bayesian inverses, concepts that arise naturally in the setting of synthetic probability~\cite{ChJa18,Fr20,PaBayes}. This brings the categorical approach towards probability closer to the algebraic approach pioneered by Segal~\cite{Se65}, Umegaki~\cite{Um54}, and others. 
We reviewed how the Accardi--Cecchini (AC) condition generalizes the modular group invariance of a subalgebra to the case of UCP maps, and not just injective $*$-homomorphisms. We then demonstrated how the Bayes condition generalizes the AC condition to allow for non-faithful states. 
Indeed, in the case of non-faithful states, we saw that the AC condition is not enough to guarantee the existence of a state-preserving conditional expectation, or more generally a Bayesian inverse. The remaining condition for the existence of Bayesian inverses was discovered in~\cite{PaRuBayes} and enhanced in the present paper (Theorem~\ref{thm:ncbayesmatrix}). Furthermore, a simplified condition, in terms of disintegrations, was presented for the first time in this paper for the existence of state-preserving conditional expectations (Theorem~\ref{thm:Takesakinonfaithfuldsum}). 

In the quantum information theory literature, the Petz recovery map and its rotated, twirled, and swivelled variants have played important roles for information recovery~\cite{Wilde15,JRSWW16,JRSWW18,SuToHa16,Je17,PaBu22}. However, for non-faithful states, the Petz recovery map does not specify the action off the support algebra. On the other hand, the Bayesian inverse does not always exist, unlike the Petz recovery map. As such, it is important to study approximate versions of Bayesian inverses. From this, one might suspect the existence of some interpolation between these two approaches towards quantum Bayesian inference~\cite{PaFu22}. Furthermore, just like perfect error-correction is related to disintegrations~\cite{PaBayes}, which has its approximate versions~\cite{LNCY97,BeOr10,NgMa10}, one might guess that approximate versions of Bayesian inverses could be used in an alternative approach towards approximate error-correction and entanglement-wedge reconstruction~\cite{CHPSSW19}. 

Finally, the Petz recovery map and its swiveled/rotated variants do not generally work in generalizing the strengthened data-processing inequality to the quantum setting. Classically, this inequality states that if $p$ is a probability measure on a finite set $X$, and $X\xstoch{f}Y$ is a stochastic map to a finite set $Y$, then there exists a \emph{recovery map}, i.e., a probability-preserving stochastic map $(Y,f\circ p)\xstoch{g}(X,p)$ such that 
\[
\relent{q}{p}-\relent{f\circ q}{f\circ p}\ge \relent{q}{g\circ f\circ q}
\]
for \emph{all} probability measures $q$ on $X$. Here, $\relent{q}{p}$ denotes the relative entropy of $q$ given $p$ and $f\circ p$ denotes the push-forward of the probability $p$ along $f$. It is known that in full generality, no such recovery map exists (see the discussion at the beginning of Section~5 in~\cite{FaFa18} and the end of Section~5 in~\cite{LiWi18}). Therefore, it would be convenient to find sufficient and/or necessary conditions for a quantum analogue of this inequality to hold.

\appendix
\section{Carlson's theorem}
\label{app:Carlson}

We recall some facts from complex analysis~\cite{Ah79}.

\begin{theorem}
[The identity theorem]
\label{thm:identitythm}
Let $f:D\to\CC$ be a holomorphic function on a domain (an open and connected subset) $D\subseteq\CC$ and let $S\subseteq D$ be a subset with an accumulation point in $S$. If $f(z)=0$ for all $z\in S$, then $f\equiv0$ on all of $D$. 
\end{theorem}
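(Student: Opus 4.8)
The plan is to prove this classical fact by the standard local-to-global argument, combining a power series analysis at the accumulation point with the connectedness of $D$. First I would fix a point $z_0\in S$ that is an accumulation point of $S$; since $z_0\in S$ we have $f(z_0)=0$ outright. As $f$ is holomorphic at $z_0$, on some disc $D(z_0,r)\subseteq D$ it admits a convergent power series expansion $f(z)=\sum_{n\geq0}a_n(z-z_0)^n$. The crucial local claim is that every coefficient $a_n$ vanishes, so that $f$ is identically zero near $z_0$.

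To establish this local claim I would argue by contradiction. Suppose not all $a_n$ vanish and let $m$ be the least index with $a_m\neq0$. Then one factors $f(z)=(z-z_0)^m g(z)$ on $D(z_0,r)$, where $g(z)=\sum_{k\geq0}a_{m+k}(z-z_0)^k$ is holomorphic with $g(z_0)=a_m\neq0$. By continuity of $g$ there is a smaller punctured disc about $z_0$ on which $g$, and hence $f$, has no zeros. But $z_0$ being an accumulation point of $S$ furnishes a sequence $(z_k)$ in $S\setminus\{z_0\}$ converging to $z_0$, at each point of which $f$ vanishes, contradicting the absence of zeros in the punctured disc. Hence all $a_n=0$, so $f\equiv0$ on $D(z_0,r)$; in particular $f^{(n)}(z_0)=0$ for all $n\geq0$.

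For the global conclusion I would invoke connectedness. Define $U:=\{w\in D:f^{(n)}(w)=0\text{ for all }n\geq0\}$. This set is closed in $D$, being the intersection over $n$ of the closed sets $(f^{(n)})^{-1}(0)$, since each derivative $f^{(n)}$ is continuous. It is also open: if $w\in U$, then the Taylor coefficients $f^{(n)}(w)/n!$ of $f$ about $w$ all vanish, so $f$ is identically zero on a whole disc around $w$, whence that disc lies in $U$. By the local claim $z_0\in U$, so $U$ is nonempty. Since $D$ is connected and $U$ is a nonempty clopen subset, $U=D$, and therefore $f\equiv0$ on all of $D$.

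Since this is a textbook result, there is no genuine obstacle; the only point demanding care is the connectedness step, where one must separately verify openness of $U$ (from the local vanishing of the power series) and closedness of $U$ (from continuity of each derivative $f^{(n)}$). The paper could alternatively simply cite~\cite{Ah79} here.
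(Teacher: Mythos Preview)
Your proof is correct and is the standard argument; the paper itself does not give a proof but simply cites Ahlfors~\cite[Chapter~4, Section~3.2, p.~127]{Ah79}, exactly as you anticipated in your final sentence.
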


\begin{proof}
See~\cite[Chapter 4 Section 3.2 page 127]{Ah79}.
\end{proof}

\begin{definition}
An \define{entire function} is a $\CC$-valued holomorphic function whose domain is all of $\CC$.
\end{definition}

\begin{theorem}
[Liouville's theorem]
If $f$ is a bounded entire function, then $f$ is a constant. 
\end{theorem}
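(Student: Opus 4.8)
The plan is to prove that the derivative $f'$ vanishes identically and then conclude that $f$ is constant from the connectedness of $\CC$. The essential input is Cauchy's integral formula for the first derivative, which I would recall (or cite from~\cite{Ah79}) in the form
\[
f'(z_{0})=\frac{1}{2\pi\ima}\oint_{|z-z_{0}|=R}\frac{f(z)}{(z-z_{0})^{2}}\,\dd z,
\]
valid for every $z_{0}\in\CC$ and every radius $R>0$, since $f$ is entire and hence holomorphic on every disc centered at $z_{0}$.

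Next I would bound the right-hand side using the hypothesis that $|f(z)|\le M$ for all $z\in\CC$, with $M$ a constant \emph{independent} of $R$. On the circle $|z-z_{0}|=R$ the integrand has modulus at most $M/R^{2}$, and the contour has length $2\pi R$, so the standard contour (ML) estimate yields
\[
|f'(z_{0})|\le\frac{1}{2\pi}\cdot\frac{M}{R^{2}}\cdot 2\pi R=\frac{M}{R}.
\]

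The decisive observation is that this inequality holds for arbitrarily large $R$ while $M$ stays fixed. Letting $R\to\infty$ forces $f'(z_{0})=0$, and since $z_{0}\in\CC$ was arbitrary, $f'\equiv0$ on the connected domain $\CC$. A function with vanishing derivative on a connected open set is constant, which completes the argument. The only genuine prerequisite is Cauchy's integral formula for derivatives; granting that, the proof is a one-line estimate followed by a limit, so I do not anticipate any real obstacle. An equivalent route would expand $f$ in its globally convergent power series $f(z)=\sum_{n\ge0}a_{n}z^{n}$ and apply Cauchy's coefficient estimates $|a_{n}|\le M/R^{n}$, letting $R\to\infty$ to annihilate every coefficient with $n\ge1$.
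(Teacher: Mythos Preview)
Your proof is correct and is precisely the classical argument via Cauchy's estimate for $f'$; the paper itself gives no proof at all, merely citing~\cite[Chapter 4 Section 2.3 page 122]{Ah79}, where the same Cauchy-estimate argument (in the coefficient form you mention at the end) is given.
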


\begin{proof}
See~\cite[Chapter 4 Section 2.3 page 122]{Ah79}.
\end{proof}

As a corollary to Liouville's theorem, the set of bounded entire functions is the one-dimensional vector subspace of constant functions inside the infinite-dimensional vector space of all entire functions. Therefore, one often distinguishes the different classes of non-bounded entire functions by their asymptotic growth rates. Such a situation occurs in the following theorem, which is used in this work. 

\begin{theorem}
[Carlson's theorem]
\label{thm:Carlson}
Let $f$ be an entire function satisfying the following conditions:
\begin{enumerate}[i.]
\item
\label{item:carlsoni}
there exist constants $C,\gamma,\gamma'\in\RR$ with $\gamma'<\pi$ such that 
$$|f(z)|\le C e^{\gamma|z|} \quad\forall\; z\in\CC \quad\text{ and }\quad |f(it)|\le C e^{\gamma'|t|} \quad\forall\;t\in\RR$$
\item
\label{item:carlsonii}
$f(n)=0$ for all $n\in\NN.$
\end{enumerate}
Then $f\equiv0$. 
\end{theorem}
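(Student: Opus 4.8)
The plan is to show that $f$ vanishes throughout the closed right half-plane $\{\Re(z)\ge 0\}$ and then to apply the identity theorem (Theorem~\ref{thm:identitythm}) to conclude $f\equiv 0$ on all of $\CC$. The first step is to introduce the quotient
\[
g(z):=\frac{f(z)}{\sin(\pi z)}.
\]
Since $\sin(\pi z)$ has a simple zero at each integer and $f$ vanishes at every non-negative integer by condition~\ref{item:carlsonii}, every singularity of $g$ in the closed right half-plane is removable, so $g$ is holomorphic on $\{\Re(z)\ge 0\}$; the poles of $g$ at the negative integers are irrelevant because the whole argument is confined to the right half-plane.

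The decisive input is an estimate on the imaginary axis. From $|\sin(\pi\ima t)|=|\sinh(\pi t)|\sim\tfrac12 e^{\pi|t|}$ together with the boundary bound $|f(\ima t)|\le Ce^{\gamma'|t|}$ of condition~\ref{item:carlsoni}, I would obtain
\[
|g(\ima t)|\le 2Ce^{(\gamma'-\pi)|t|}\qquad\text{for all large }|t|.
\]
Because $\gamma'<\pi$, this is genuine exponential decay of $g$ along the entire imaginary axis, with rate $\pi-\gamma'>0$.

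The heart of the proof is a Phragm\'en--Lindel\"of analysis. The bound $|f(z)|\le Ce^{\gamma|z|}$ shows that $g$ is of exponential type (order $1$). I would apply the Phragm\'en--Lindel\"of principle separately in the first and fourth quadrants to the auxiliary function $G(z):=g(z)e^{-\gamma z}$. This $G$ is bounded on the imaginary axis by the decay estimate and bounded on the real axis (there $f(z)e^{-\gamma z}$ is bounded, and its zeros at the non-negative integers absorb those of $\sin(\pi z)$, so $G$ stays bounded), while remaining of exponential type. Since each quadrant is a sector of opening $\pi/2$, for which order $1$ is strictly subcritical, the principle yields that $G$ is bounded on the whole right half-plane. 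As $G$ still decays exponentially on the imaginary axis, a standard log-integrability criterion for bounded holomorphic functions on a half-plane---the half-plane analogue of Liouville's theorem, via the Nevanlinna condition---forces $G\equiv 0$, hence $g\equiv 0$ and $f\equiv 0$ on $\{\Re(z)\ge 0\}$.

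The main obstacle is precisely that the ambient growth $e^{\gamma|z|}$ has order $1$, the \emph{critical} order for a half-plane (opening $\pi$), so the naive Phragm\'en--Lindel\"of principle does not apply there; the two devices above---passing to quadrants of opening $\pi/2$ and twisting by $e^{-\gamma z}$ to neutralise the real-axis growth---are exactly what reduce matters to a subcritical sector. The strict inequality $\gamma'<\pi$ is both indispensable and sharp: it is what makes the imaginary-axis decay rate positive, and the conclusion genuinely fails when $\gamma'=\pi$, as shown by $f(z)=\sin(\pi z)$, which vanishes at every integer yet is not identically zero.
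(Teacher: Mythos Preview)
Your strategy---divide by $\sin(\pi z)$, exploit the exponential decay this produces on the imaginary axis, and run Phragm\'en--Lindel\"of in the two quarter-planes---is the classical direct route and is essentially correct. The paper, by contrast, does not prove the theorem at all: it invokes~\cite[Theorem~9.2.1]{Bo11}, whose hypothesis is stated via the indicator $h(\theta):=\limsup_{r\to\infty}r^{-1}\log|f(re^{i\theta})|$ and requires $h(\pi/2)+h(-\pi/2)<2\pi$. The few displayed lines in the paper merely verify that $|f(it)|\le Ce^{\gamma'|t|}$ forces $h(\pm\pi/2)\le\gamma'$, hence $h(\pi/2)+h(-\pi/2)\le 2\gamma'<2\pi$; all the analytic content is deferred to Boas, whose proof proceeds via Carleman's formula rather than your quotient construction. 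So the two approaches are genuinely different: yours is self-contained Phragm\'en--Lindel\"of, the paper's is a black-box citation.

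Two points in your sketch should be tightened. First, the boundedness of $G(x)=g(x)e^{-\gamma x}$ on the positive real axis is not immediate from $|f(x)|\le Ce^{\gamma x}$ alone, because $g(n)=\pm f'(n)/\pi$ and $|f'(n)|$ can grow like $e^{\gamma n}$; the clean justification is the maximum principle on discs $|z-n|\le\tfrac12$, on whose boundary $|\sin(\pi z)|$ is uniformly bounded below. Second, your terminal step---from ``$G$ bounded on the half-plane with exponential decay on $i\RR$'' to ``$G\equiv0$''---relies on the Nevanlinna/Szeg\H{o} criterion that a nonzero $H^\infty$ function on a half-plane has $\int\frac{\log|G(it)|}{1+t^2}\,dt>-\infty$. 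That result is correct and does finish the argument, but it is comparable in depth to Carlson's theorem itself and should be cited as such rather than labelled ``standard''.
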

\begin{proof}
This follows from~\cite[Theorem~9.2.1]{Bo11}. 
We include the argument for completeness.
First, define  
\[
h(\theta):=\limsup_{r\to\infty}\frac{\log|f(re^{i\theta})|}{|r|}.
\]
By the assumptions in~\cite[Theorem~9.2.1]{Bo11}, $f$ is regular and of exponential type, which implies the constants $C,\gamma,\gamma'$ exist. Then 
\[
h(\pm\pi/2)=\limsup_{r\to\infty}\frac{\log|f(\pm ir)|}{|r|}\le\limsup_{r\to\infty}\frac{\log|Ce^{\gamma'|r|}|}{|r|}=\limsup_{r\to\infty}\left(\frac{\log|C|}{|r|}+\gamma'\right)=\gamma'.
\]
Hence, $h(\pi/2)+h(-\pi/2)\le2\gamma'$, which shows that $\gamma'<\pi$.
\end{proof}

\section{State-preserving conditional expectations}\label{app:stateprescondexp}

As in Section~\ref{sec:disintstateprescondexp}, let $F:\mathcal{B}\to\mathcal{A}$ be a unital $*$-homomorphism of finite-dimensional $C^*$-algebras (multi-matrix algebras). 
Set $\mathcal{N}:=F(\mathcal{B})$ and $\M := \A$ so that $\N\subseteq\M$.
In this section, we characterize the states $\omega$ on $\M$ (not necessarily faithful) that admit an $\omega$-preserving conditional expectation $E:\M\stoch\M$ onto $\N$, i.e., $\omega = \omega \circ E$.

Before proving the proposition, we need to recall some notation. Let $P_i$, $i=1,\ldots,s$, and $Q_j$, $j=1,\ldots,t$, be the minimal projections in the center of $\M$ and $\N$ respectively, in some ordering. These projections satisfy $P_i Q_j = Q_j P_i$. As in Section~\ref{sec:disintstateprescondexp}, set $X:=\{1,\dots,s\}$ and $Y:=\{1,\dots,t\}$. Let $\M_i := P_i \M P_i = P_i \M$ and $\N_i := Q_j \N Q_j = Q_j \N$. Then $\M_i \cong \matr_{m_i}(\CC)$ and $\N_j \cong \matr_{n_j}(\CC)$ for some $m_i, n_j \in\NN$, and
$$\M = \bigoplus_{i\in X} \M_i \cong \bigoplus_{i\in X} \matr_{m_i}(\CC), \quad \N = \bigoplus_{j\in Y} \N_j \cong \bigoplus_{j\in Y} \matr_{n_j}(\CC).$$
Let also $\M_{ij} := P_i Q_j \M P_i Q_j = P_i Q_j \M Q_j$ and $\N_{ij} := P_i Q_j \N P_i Q_j = P_i Q_j \N$. Assuming $P_i Q_j \neq 0$, the map $\N_j\ni x\mapsto P_i x \in \N_{ij}$ is a $*$-isomorphism because $P_i \in {\N_j}'$ and $\N_j$ is a factor. Moreover, $\N_{ij} \subseteq \M_{ij}$ is a type $I_n$ subfactor. Explicitly, 
$$\N_{ij} \cong \matr_{n_j}(\CC) \otimes \oneop_{c_{ij}} \subseteq \matr_{n_j}(\CC) \otimes \matr_{c_{ij}}(\CC) \cong \M_{ij},$$
where $c_{ij} \in \NN$ are some multiplicities describing the type $I_n$ subfactors, i.e., $(c_{ij})_{i,j}$ is the Bratteli inclusion matrix of $\N\subseteq\M$, extended to all pairs $(i,j)$ by setting $c_{ij} := 0$ if $P_i Q_j = 0$. Note that $m_i = \sum_{j} c_{ij} n_j$ because $\N\subseteq\M$ is unital.

Lastly, as $\sum_i P_i = \oneop = \sum_j Q_j$, every $A\in\M$ can be written as $A = \sum_{i,u,v} Q_u P_i A P_i Q_v$, where $i$ runs in $X$ and $u,v$ run in $Y$. We write for short $A_i := P_i A P_i = P_i A \in \M_i$ and $A_{i;uv} := Q_u P_i A Q_v$. Thus $A = \sum_{i} A_i$, but also $A = \sum_{i,u,v} A_{i;uv}$, and $A_{i;uv} \in \M_{ij}$ if $u=v = j$. Note that $\M_{ij} = 0 = \N_{ij}$ whenever $P_i Q_j = 0$.

The following two lemmas are consequences of the condition $\sum_i P_i = \oneop = \sum_j Q_j$, and they hold for arbitrary von~Neumann algebras with finite-dimensional centers. The proof of the first is immediate, while for the second we refer to \cite[\Sec 2]{Hav90}, \cite[\Sec 2]{GiLo19}, \cite{Gio19}.

\begin{lemma}\label{lem:statedecomp}
Every state $\omega$ on $\M$, not necessarily faithful, can be written as follows. Let $A\in \M$, then
$$\omega(A) = \sum_i p_i \omega_i(A_i),$$
where $p_i := \omega(P_i)$ satisfy $p_i \geq 0$, $\sum_i p_i = 1$ and $\omega_i$ is the state on $\M_i$ defined by $\omega_i(A_i) := p_i^{-1} \omega(A_i)$ if $p_i \neq 0$, or the zero functional on $\M_i$ otherwise.
\end{lemma}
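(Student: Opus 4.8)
The plan is to use the central decomposition $\M = \bigoplus_{i\in X}\M_i$ induced by the minimal central projections $P_i$, together with the positivity and unitality of $\omega$. Since $\sum_{i\in X}P_i = \oneop$ and each $P_i$ is central, every $A\in\M$ decomposes as $A = \sum_i P_i A P_i = \sum_i A_i$ with $A_i = P_i A P_i\in\M_i$. By linearity of $\omega$ this already gives $\omega(A) = \sum_i\omega(A_i)$, so it remains only to rewrite each summand in the claimed normalized form.

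First I would record the elementary properties of the scalars $p_i := \omega(P_i)$. Positivity of $\omega$ and $P_i = P_i^*P_i\ge 0$ give $p_i\ge 0$, while unitality together with $\sum_i P_i = \oneop$ gives $\sum_i p_i = \omega(\oneop) = 1$; thus $(p_i)_{i\in X}$ is a probability distribution on $X$. For each $i$ with $p_i\neq 0$, I would set $\omega_i := p_i^{-1}\,\omega(\;\cdot\;)$ on $\M_i$ and verify it is a state: it is positive as a positive multiple of the restriction of $\omega$, and unital because $\omega_i(P_i) = p_i^{-1}\omega(P_i) = 1$, where $P_i$ is the unit of $\M_i$. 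With this definition the identity $\omega(A_i) = p_i\,\omega_i(A_i)$ holds for these indices.

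The only point needing an actual argument is the case $p_i = 0$, where the formula forces $\omega_i$ to be the zero functional and hence requires $\omega(A_i) = 0$ for all $A_i\in\M_i$. This follows from the Cauchy--Schwarz inequality for the positive functional $\omega$ applied to the sesquilinear form $(B,C)\mapsto\omega(B^*C)$: taking $B = P_i$ and $C = A_i = P_i A_i$ yields $|\omega(A_i)|^2 = |\omega(P_i^* A_i)|^2\le\omega(P_i)\,\omega(A_i^* A_i) = p_i\,\omega(A_i^* A_i) = 0$. Summing the identity $\omega(A_i) = p_i\,\omega_i(A_i)$ over all $i\in X$ then gives $\omega(A) = \sum_i p_i\,\omega_i(A_i)$. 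No step presents a genuine obstacle: the $p_i = 0$ case is the only place where positivity of $\omega$ beyond mere linearity enters, consistent with the paper's remark that the proof is immediate.
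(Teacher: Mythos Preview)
Your proof is correct and complete; the paper itself does not give a proof, stating only that ``the proof of the first is immediate,'' so your argument simply fills in the routine details the authors omitted. The Cauchy--Schwarz step for the $p_i=0$ case is exactly the standard way to make this precise.
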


\begin{lemma}\label{lem:condexpdecomp}
Every conditional expectation $E:\M\stoch\M$ onto $\N$, not necessarily faithful nor state-preserving, can be written as follows. Let $A\in \M$, then
$$E(A) = \sum_{i,j} \lambda_{ij} \Q_{ij}(E_{ij}(A_{i;jj}))$$
where $\lambda_{ij}\geq 0$ are defined by $\lambda_{ij} Q_j := E(P_iQ_j)$, $E_{ij} : \M_{ij} \stoch \M_{ij}$ are the conditional expectations of $\M_{ij}$ onto $\N_{ij}$ defined by $E_{ij}(A_{i;jj}) := \lambda_{ij}^{-1} P_i Q_jE(A_{i;jj}) = \lambda_{ij}^{-1} P_i Q_j E(P_i A)$ if $\lambda_{ij} \neq 0$,
or the zero map on $\M_{ij}$ otherwise, and $\Q_{ij}: \N_{ij} \to \N_j$ is the inverse of the $*$-isomorphism $A \mapsto P_i A$ if $P_iQ_j \neq 0$, or the zero map on $\N_{ij}$ otherwise.
\end{lemma}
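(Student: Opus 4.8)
The plan is to exploit the two commuting families of central projections $\{P_i\}_{i\in X}$ and $\{Q_j\}_{j\in Y}$ to decompose $E$ blockwise, and then to recognize each surviving block as a scalar multiple of a conditional expectation between type $I$ subfactors. The conceptual input is minimal: bimodularity and positivity of $E$, together with the factoriality of the $\N_j$ and the fact that $P_i$ is central in $\M$ but need not lie in $\N$.

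First I would reduce to the ``diagonal'' blocks. Since $E$ is $\N$-bimodular and each $Q_j$ is central in $\N$, for $u\neq v$ one has $E(Q_u A Q_v)=Q_u E(A) Q_v = Q_u Q_v E(A)=0$, so that $E(A)=\sum_j E(Q_j A Q_j)$. Expanding $A=\sum_i A_i$ with $A_i=P_i A=P_i A P_i$ (using centrality of the $P_i$ in $\M$) then yields $E(A)=\sum_{i,j}E(A_{i;jj})$, reducing the problem to understanding $E$ on each $\M_{ij}$. Next I would pin down the scalars $\lambda_{ij}$: the element $E(P_iQ_j)=Q_j E(P_i) Q_j$ lies in $\N_j$ and commutes with every $b\in\N_j$ (because $P_i$ is central in $\M$ and $E$ is bimodular), hence lies in the center $\CC Q_j$ of the factor $\N_j$. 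Positivity of $E$ forces $\lambda_{ij}\geq 0$, and $\sum_i \lambda_{ij} Q_j=E(Q_j)=Q_j$ gives $\sum_i \lambda_{ij}=1$. When $\lambda_{ij}=0$ the estimate $0\le E(A_{i;jj}^* A_{i;jj})\le \|A_{i;jj}\|^2\, E(P_iQ_j)=0$ shows the entire $(i,j)$ block is annihilated, matching the zero-map convention.

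On the blocks with $\lambda_{ij}\neq 0$ I would verify that $E_{ij}=\lambda_{ij}^{-1}P_iQ_j E(\slot)$, restricted to $\M_{ij}$, is a conditional expectation onto $\N_{ij}$: positivity and $\N_{ij}$-bimodularity descend from those of $E$ using centrality of $P_i$, and the key computation $E(P_i b)=E(P_i)b=\lambda_{ij}b$ for $b\in\N_j$ (valid because $P_i\in\M$ and $E$ is right $\N$-modular) shows $E_{ij}$ fixes $\N_{ij}$; alternatively one invokes Tomiyama's theorem (Theorem~\ref{thm:tomiyama}) to conclude that the norm-one idempotent $E_{ij}$ is automatically a conditional expectation. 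The very same computation identifies $E(A_{i;jj})=\lambda_{ij}\,\Q_{ij}(E_{ij}(A_{i;jj}))$ once one unwinds the $*$-isomorphism $\N_j\cong\N_{ij}$, $b\mapsto P_i b$, whose inverse is $\Q_{ij}$: indeed $E_{ij}(A_{i;jj})=\lambda_{ij}^{-1}P_i\,E(A_{i;jj})$ with $E(A_{i;jj})\in\N_j$, so $\Q_{ij}(E_{ij}(A_{i;jj}))=\lambda_{ij}^{-1}E(A_{i;jj})$. Summing over $i,j$ recovers the claimed formula.

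The main obstacle I anticipate is the bookkeeping in the bimodularity computations, specifically keeping straight which projections belong to $\N$ and which belong only to $\M$. Since $P_i$ is central in $\M$ but not an element of $\N$, the map $E$ does \emph{not} commute with $\Ad_{P_i}$, and a careless manipulation would wrongly suggest $E$ is $P_i$-covariant. The single identity that makes the whole argument go through is $E(P_i b)=E(P_i)\,b$ for $b\in\N$, which holds purely by right $\N$-modularity of $E$; isolating and correctly deploying this is the heart of the matter, and once it is in hand the remaining verifications (positivity under $\Ad_{P_iQ_j}$, the squeeze for $\lambda_{ij}=0$, and the passage through $\Q_{ij}$) are routine.
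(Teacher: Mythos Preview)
Your argument is correct and essentially complete. The paper itself does not supply a proof of this lemma but instead refers the reader to \cite{Hav90}, \cite{GiLo19}, and \cite{Gio19}; your self-contained derivation via $\N$-bimodularity, centrality of the $P_i$ in $\M$, and factoriality of the $\N_j$ is precisely the standard route taken in those references. One minor point: in the $\lambda_{ij}=0$ case you deduce $E(A_{i;jj}^*A_{i;jj})=0$ from the squeeze, but to conclude $E(A_{i;jj})=0$ you still need the Schwarz inequality $E(A)^*E(A)\le E(A^*A)$, which holds since conditional expectations are Schwarz maps; you allude to this but do not state it explicitly.
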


\begin{remark}
Summing over $i$ in the defining equation of the $\lambda_{ij}$ we get $\sum_i\lambda_{ij} Q_j = \sum_i E(P_iQ_j) = E(Q_j) = Q_j$, hence $\sum_i \lambda_{ij} = 1$.
Moreover, if $\omega$ is a state on $\M$ such that $\omega = \omega \circ E$, setting $p_i := \omega(P_i)$ and $q_j := \omega(Q_j)$, we get $\sum_j \lambda_{ij} q_j = p_i$. Indeed, $p_i = \omega(P_i) = \omega(E(P_i)) = \sum_j \omega(E(P_iQ_j)) = \sum_j \lambda_{ij}\omega(Q_j) = \sum_j \lambda_{ij} q_j$.
\end{remark}

Now we state and prove the main result of this section.

\begin{proposition}
Let $\N\subseteq\M$ be a unital inclusion of multi-matrix algebras. A state $\omega$ on $\M \cong \bigoplus_{i\in X} \matr_{m_i}(\CC)$, not necessarily faithful, is of the form 
$$\omega(\slot) = \sum_i p_i \tr(\rho_i \slot)$$
where $\rho_i$ is the density matrix associated with the restriction of $\omega$ to $\M_i \cong \matr_{m_i}(\CC)$, $i\in X$, or zero.
The state $\omega$ admits a conditional expectation $E:\M\stoch\M$ onto $\N \cong \bigoplus_{j\in Y} \matr_{n_j}(\CC)$ such that $\omega = \omega \circ E$ if and only if, for every $j,u,v\in Y$ with $u\neq v$,
$$P_{iu} \rho_i P_{iv} = 0 \quad\text{and}\quad P_{ij} \rho_i P_{ij} = \mu_{ij}  \sigma_j \otimes \tau_{ij},$$
where $P_{ij}$ is the projection in $\matr_{m_i}(\CC)$ corresponding to $P_iQ_j \in \M_i$, $\mu_{ij}\geq 0$ are some proportionality coefficients, $\sigma_j$ is the density matrix in $\matr_{n_j}(\CC)$ associated with the restriction of $\omega$ to $\N_j \cong \matr_{n_j}(\CC)$ or zero, and $\tau_{ij}$ is the density matrix in $\matr_{c_{ij}}(\CC)$ associated with the partial trace $E_{ij} : \M_{ij} \cong \matr_{n_j}(\CC) \otimes \matr_{c_{ij}}(\CC) \to \N_{ij} \cong \matr_{n_j}(\CC) \otimes \oneop_{c_{ij}}$ or zero.
More precisely, the $\mu_{ij}$ are given by $\mu_{ij} p_i := \lambda_{ij} q_j$ if $p_i\neq 0$, where the $p_i$, $q_j$ and $\lambda_{ij}$ are defined as before, or $\mu_{ij} := 0$ if $p_i=0$. The coefficients $\mu_{ij}$ fulfill $\sum_{j} \mu_{ij} = 1$ if $p_i \neq 0$, and $\sum_{i} \mu_{ij} p_i = q_j$.
\end{proposition}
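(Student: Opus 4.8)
The plan is to combine the two structural decompositions in Lemma~\ref{lem:statedecomp} and Lemma~\ref{lem:condexpdecomp} with the partial-trace representation of Lemma~\ref{lem:Epartialtrace}, and then to convert the state-preservation identity $\omega=\omega\circ E$ into a single equation between density operators by exploiting the nondegeneracy of the trace pairing on $\M$. For the forward direction, suppose an $\omega$-preserving conditional expectation $E$ exists. First write $E$ in the form $E(A)=\sum_{i,j}\lambda_{ij}\Q_{ij}(E_{ij}(A_{i;jj}))$ provided by Lemma~\ref{lem:condexpdecomp}. Since each $\N_{ij}\subseteq\M_{ij}$ is a type $I$ subfactor, Lemma~\ref{lem:Epartialtrace} represents each component conditional expectation as a partial trace $E_{ij}=\id_{n_j}\otimes\tr(\tau_{ij}\slot)$ against a (not necessarily invertible) density matrix $\tau_{ij}\in\matr_{c_{ij}}(\CC)$, under the identifications $\M_{ij}\cong\matr_{n_j}(\CC)\otimes\matr_{c_{ij}}(\CC)$ and $\N_{ij}\cong\matr_{n_j}(\CC)\otimes\oneop_{c_{ij}}$. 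I then compute $\omega(E(A))$ by applying Lemma~\ref{lem:statedecomp} to both $\omega$ on $\M$ (represented by the density operator $\sum_i p_i\rho_i$) and to its restriction to $\N$ (represented by $\sum_j q_j\sigma_j$), together with the elementary identity $\tr\big(\sigma_j\,\Q_{ij}(E_{ij}(A_{i;jj}))\big)=\tr\big((\sigma_j\otimes\tau_{ij})A_{i;jj}\big)$ and the fact that $A_{i;jj}=P_{ij}AP_{ij}$. This shows that $\omega\circ E$ is represented by the density operator $\sum_{i,j}q_j\lambda_{ij}(\sigma_j\otimes\tau_{ij})$, each summand being supported on the projection $P_{ij}$.

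Equating this with the density operator of $\omega$ and invoking nondegeneracy of the trace, block by block in $i\in X$, yields the single identity
\[
p_i\rho_i=\sum_{j}q_j\lambda_{ij}(\sigma_j\otimes\tau_{ij})\qquad\forall\,i\in X.
\]
Since $P_i=\sum_j P_{ij}$ with the $P_{ij}$ mutually orthogonal, the right-hand side is block diagonal, so this one identity simultaneously encodes both claimed conditions: the off-diagonal blocks must vanish, $P_{iu}\rho_iP_{iv}=0$ for $u\neq v$, while the diagonal blocks give $P_{ij}\rho_iP_{ij}=\mu_{ij}\,\sigma_j\otimes\tau_{ij}$ upon setting $\mu_{ij}:=p_i^{-1}\lambda_{ij}q_j$ for $p_i\neq0$ and $\mu_{ij}:=0$ otherwise. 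The normalizations $\sum_j\mu_{ij}=1$ (for $p_i\neq0$) and $\sum_i\mu_{ij}p_i=q_j$ then follow at once from the relations $\sum_j\lambda_{ij}q_j=p_i$ and $\sum_i\lambda_{ij}=1$ recorded in the Remark preceding the statement.

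For the converse I reverse this computation. Given density matrices $\tau_{ij}$ and coefficients $\mu_{ij}$ satisfying the hypotheses, define $\lambda_{ij}:=\mu_{ij}p_i/q_j$ whenever $q_j\neq0$ (and choose any values with $\sum_i\lambda_{ij}=1$ when $q_j=0$, which is harmless since the corresponding state component then vanishes); the constraint $\sum_i\mu_{ij}p_i=q_j$ guarantees $\sum_i\lambda_{ij}=1$. Taking the $E_{ij}$ to be the partial traces determined by the $\tau_{ij}$ and assembling $E$ through the formula of Lemma~\ref{lem:condexpdecomp}, the resulting map is a genuine conditional expectation onto $\N$ (projection, $\N$-bimodularity, positivity) by the structural content of that lemma, the condition $\sum_i\lambda_{ij}=1$ ensuring $E(Q_j)=Q_j$ and hence unitality. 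Running the trace computation of the forward direction backwards shows that $\omega\circ E$ is represented by $\sum_{i,j}q_j\lambda_{ij}(\sigma_j\otimes\tau_{ij})=\sum_i p_i\rho_i$, the density operator of $\omega$, so $\omega=\omega\circ E$.

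The main obstacle I anticipate is not conceptual but bookkeeping: keeping the tensor-factor conventions in $\M_{ij}\cong\matr_{n_j}(\CC)\otimes\matr_{c_{ij}}(\CC)$ consistent with the direction of the partial trace in Lemma~\ref{lem:Epartialtrace} and with the isomorphisms $\Q_{ij}$, and correctly handling the degenerate indices where $p_i=0$ or $q_j=0$, so that the ``density matrix or zero'' conventions for $\rho_i$ and $\sigma_j$ and the ``zero map'' conventions for $E_{ij}$ and $\Q_{ij}$ remain coherent throughout the identification of density operators.
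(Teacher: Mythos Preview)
Your proposal is correct and follows essentially the same route as the paper: both arguments combine Lemma~\ref{lem:statedecomp}, Lemma~\ref{lem:condexpdecomp}, and the partial-trace representation of Lemma~\ref{lem:Epartialtrace}, and both construct the converse by setting $\lambda_{ij}:=\mu_{ij}p_i/q_j$ and assembling $E$ from the $E_{ij}$. The only cosmetic difference is that the paper derives the two block conditions by evaluating $\omega=\omega\circ E$ on test elements $A=A_{i;jj}$ (simple tensors) and $A=A_{i;uv}$ with $u\neq v$, whereas you pass directly to the density-operator identity $p_i\rho_i=\sum_j q_j\lambda_{ij}(\sigma_j\otimes\tau_{ij})$ via nondegeneracy of the trace pairing and then read off both conditions from block-diagonality; these are equivalent ways of phrasing the same computation.
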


\begin{proof}
By Lemma \ref{lem:statedecomp}, for every state $\omega$ on $\M$, we have
$$\omega(A) = \sum_i p_i \omega_i(A_i).$$
Similarly, for the restriction $\xi := \omega_{\restriction \N}$, we have
$$\xi(B) = \sum_j q_j \xi_j(B_j),$$
where $B_j := Q_j B Q_j = Q_j B \in \N_j$, $q_j := \xi(Q_j)$ fulfill $q_j \geq 0$, $\sum_j q_j = 1$, and $\xi_j(B_j) := q_j^{-1} \xi(B_j)$ if $q_j \neq 0$, or the zero functional on $\N_j$ otherwise.

By Lemma \ref{lem:condexpdecomp}, for every conditional expectation $E:\M\stoch\M$ onto $\N$ we have
$$E(A) = \sum_{i,j} \lambda_{ij} \Q_{ij}(E_{ij}(A_{i;jj})).$$

We now prove one of the two implications in the statement of the proposition. Assume that $\omega = \omega \circ E$. Then, by the previous discussion, the two sides of the equality
\[\omega(A) = \omega(E(A))\]
read
\[\omega(A) = \sum_{i,u,v} p_i\omega_i(A_{i;uv})\]
and
\[
\omega(E(A))
=\sum_{i,j} \lambda_{ij} \omega(\Q_{ij}(E_{ij}(A_{i;jj})))
=\sum_{i,j} \lambda_{ij} q_j \xi_j(\Q_{ij}(E_{ij}(A_{i;jj}))).
\]

In our case at hand, $\M_{ij} \cong \matr_{n_j}(\CC) \otimes \matr_{c_{ij}}(\CC)$, $\N_{ij} \cong \matr_{n_j}(\CC) \otimes \oneop_{c_{ij}}$ and $\N_{j} \cong \matr_{n_j}(\CC)$. So by Lemma \ref{lem:Epartialtrace}, $E_{ij}$ can be viewed as the partial trace defined on simple tensors $B_j \otimes C_{ij}$ in $\matr_{n_j}(\CC) \otimes \matr_{c_{ij}}(\CC)$ by
\[E_{ij}(B_j \otimes C_{ij}) = \tr(\tau_{ij} C_{ij}) B_j \otimes \oneop_{c_{ij}}\]
for some density matrix $\tau_{ij}\in \matr_{c_{ij}}(\CC)$, or the zero map, and $\Q_{ij}$ can be viewed as the $*$-isomorphism $B_j \otimes \oneop_{c_{ij}} \mapsto B_j$.
In view of the identifications $\M_i \cong \matr_{m_i}(\CC)$ and $\N_j \cong \matr_{n_j}(\CC)$, if $\omega_i$ and $\xi_j$ are not zero, we have $\omega_i(A_i) = \tr(\rho_i  A_i)$, $A_i \in \M_i$, and $\xi_j(B_j) = \tr(\sigma_j  B_j)$, $B_j\in \N_j$, for some density matrices $\rho_i\in \matr_{m_i}(\CC)$ and $\sigma_j\in \matr_{n_j}(\CC)$. 

Choose $A = A_{i;jj} \in \M_{ij} \subseteq \M_i$, for $i,j$ fixed. Then $Q_u A_{i;jj} P_i Q_v = A_{i;jj}$ if $u=v=j$, zero otherwise. In particular, choose $A = A_{i;jj}$ to be identified with a simple tensor $B_j \otimes C_{ij}$ in $\matr_{n_j}(\CC) \otimes \matr_{c_{ij}}(\CC)$. Then $\omega(A) = \omega(E(A))$ implies 

$$p_i \tr_{\matr_{m_i}(\CC)}((P_{ij} \rho_i P_{ij})  B_j \otimes C_{ij}) =  \lambda_{ij} q_j \tr(\sigma_j  B_j) \tr(\tau_{ij} C_{ij}),$$
where $P_{ij}$ is the projection in $\matr_{m_i}(\CC)$ corresponding to $P_iQ_j \in \M_i$. In particular, it holds $\sum_j P_{ij} = \oneop_{m_i}$.
Observe that $p_i = 0$ implies $0 = p_i = \sum_j \lambda_{ij}q_j$,
hence $\lambda_{ij}q_j = 0$ for every $j$, as they are all non-negative numbers.
By taking linear combinations of simple tensors, we get
\[P_{ij} \rho_i P_{ij} = 
\frac{\lambda_{ij}q_j}{p_i}  \sigma_j \otimes \tau_{ij}\]
if $p_i \neq 0$ and $P_{ij} \neq 0$.
Observe also that $P_{ij} = 0$ if and only if the algebras $\M_{ij}$ and $\N_{ij}$ are zero.
In order to determine $\rho_i$ completely, we need to determine also $P_{iu} \rho_i P_{iv}$ with $u\neq v$ and we can assume $P_{iu} \neq 0$, $P_{iv} \neq 0$. Choose $A = A_{i;uv} \in \M_i$ with $u\neq v$, and denote again by $A_{i;uv}$ its corresponding element in $\matr_{m_i}(\CC)$. Then $\omega(A) = \omega(E(A))$ implies $p_i\omega_i(A_{i;uv}) = 0$, because $Q_j A_{i;uv} P_i Q_j = 0$ if $u\neq v$. Hence
\[0 = p_i \tr(\rho_i  A_{i;uv}) = p_i \tr(\rho_i  (P_{iu}  A_{i;uv}  P_{iv})) = p_i \tr((P_{iv}  \rho_i   P_{iu})  A_{i;uv})\]
which, if $p_i \neq 0$, yields $P_{iv} \rho_i P_{iu} = 0$ concluding the proof.

The converse implication in the statement of the proposition, \ie, the construction of a not necessarily unique $E$ such that $\omega = \omega \circ E$ follows by choosing $E_{ij}$ to be the partial trace defined by the density matrix $\tau_{ij}$ and setting $\lambda_{ij} := \frac{\mu_{ij}p_i}{q_j}$ if $q_j \neq 0$, or $\lambda_{ij} := 0$ otherwise.
\end{proof}

\bigskip
{\bf Funding:}
This work was supported by the European Union's Horizon 2020 research and innovation 
programme H2020-MSCA-IF-2017 [795151 to L.G.]; by the European Research Council (ERC) under the European Union's Horizon 2020 research and innovation program [677368]; by the Ministero dell'Istruzione, dell'Universit\`a e della Ricerca (MIUR) Excellence Department Project awarded to the Department of Mathematics, University of Rome Tor Vergata [CUP E83C18000100006]; and by MEXT-JSPS Grant-in-Aid for Transformative Research Areas (A) ``Extreme Universe'', No.\ 21H05183. We also acknowledge support for collaboration in the occasion of the conferences \lq\lq AMS-MAA Joint Mathematics Meeting 2019" in Baltimore, Maryland, and \lq\lq Operator Algebras and Quantum Physics" at the Simons Center for Geometry and Physics at Stony Brook, New York, in June 2019, provided by the National Science Foundation (NSF) Division of Mathematical Sciences (DMS) [1641020]; by the European Research Council (ERC) under the European Union's Horizon 2020 research and innovation program [669240]; and by the Gruppo Nazionale per l'Analisi Matematica, la Probabilit\`a e le loro Applicazioni of the Istituto Nazionale di Alta Matematica \lq\lq Francesco Severi" (GNAMPA-INdAM).

\medskip
{\bf Acknowledgements:}
L.G.\ thanks Dietmar Bisch for helpful comments and suggestions, especially related to Quantum Information Theory.
A.J.P.\ thanks the Institut des Hautes \'Etudes Scientifiques (IH\'ES), particularly Emmanuel Hermand, for their support. A part of this work was completed while L.G.\ was visiting scholar at the Department of Mathematics, Vanderbilt University, Nashville, Tennessee, while A.J.P. was at IH\'ES, and while B.P.R.\ was at Farmingdale State College, The State University of New York (SUNY).

\medskip
{\bf Notice:} This manuscript has been authored, in part, by UT-Battelle, LLC, under contract DE-AC05-00OR22725 with the US Department of Energy (DOE). The US government retains and the publisher, by accepting the article for publication, acknowledges that the US government retains a nonexclusive, paid-up, irrevocable, worldwide license to publish or reproduce the published form of this manuscript, or allow others to do so, for US government purposes. DOE will provide public access to these results of federally sponsored research in accordance with the DOE Public Access Plan (\href{http://energy.gov/downloads/doe-public-access-plan}{http://energy.gov/downloads/doe-public-access-plan}).

\addcontentsline{toc}{section}{References}
\bibliographystyle{plain}
\bibliography{mybib}

\end{document}